\newtheorem{thm}{Theorem}
\numberwithin{thm}{section}
\newtheorem{lemma}[thm]{Lemma}
\newtheorem{cor}[thm]{Corollary}
\newtheorem{prop}[thm]{Proposition}
\newcommand{\neweq}[1]{\begin{equation}\label{#1}}
\def\phi{\varphi}
\def\incep{\left\{\begin{array}{cl} }
 \def\termin{\end{array}\right. }
\def\2af{2^*_\alpha}
\def\dd{\mathbb{D}}
\title{ \textbf{Toeplitz operators on weighted harmonic Bergman spaces}}
\author{Zipeng Wang}
\address{College of Mathematics and Information Science, Xi'an, Shaanxi Normal University, 710062, PR \ China.}
\email{zipengwang@snnu.edu.cn}
\author{ Xianfeng  Zhao}
\address{ College of Mathematics and Statistics, Chongqing University, Chongqing, 401331, PR China and \ Shanghai \ Center\ for \ Mathematical \ Sciences,\ Shanghai, Fudan University, 200433,  PR \ China.}
\email{xianfengzhao@cqu.edu.cn}
\keywords{Toeplitz operator, weighted harmonic Bergman space,  boundedness, compactness, invertibility}
\thanks{MR(2010) Subject Classification  47B 35, 47B 65}
\thanks{This work was partially supported by a NSFC grant(11271387) and Chongqing Natural Sience Foundation (cstc2013jjB0050). }
\begin{document}
\maketitle
\begin{center}{}\end{center}

\begin{abstract}
In this paper, we study Toeplitz operators on the  weighted  harmonic Bergman spaces  with nonnegative symbols, the weights we choose here are Muckenhoupt $\mathcal A_2$ weights.  Results obtained include characterizations of
 bounded Toeplitz operators, compact Toeplitz
operators, invertible Toeplitz operators  and Toeplitz operators in the Schatten  classes.
\end{abstract}

\section{Introduction}
Let $1 \leqslant p<\infty$ and $\omega$ be a nonnegative integrable function  on the unit disk $\dd$.  $L^p(\omega)$ denotes the Banach space with norm
\begin{equation*}
\|f\|_{L^p(\omega)}:=\bigg(\int_\dd |f(z)|^p \omega(z) dA(z)\bigg)^\frac{1}{p}.
\end{equation*}
The weighted harmonic (analytic) Bergman space $L^p_h(\omega)$ ($L^p_a(\omega)$)  is the subspace of $L^p(\omega)$ which is consisting of  harmonic (analytic) functions on $\mathbb D$. The goal of this paper is to provide a framework to study operator properties (boundedness, compactness, Schatten classes and inveritibility) of Toeplitz operators on $L_h^2(\omega)$ with nonnegative symbols.

Weighted analytic function spaces and their Toeplitz operators have captured people's attentions for a long time. It is now well known (\cite{Zhu2007}) that several results on unweighted Bergman space can be extended to the standard weighted Bergman space $L_a^2(\omega_\alpha)$,  where $\omega_\alpha(z)=(1+\alpha)(1-|z|^2)^\alpha$ and $-1<\alpha<\infty$.
%By delicate techniques and ideas in harmonic analysis, Pel\'{a}ez and R\"{a}tty\"{a}  studied some integral operators on rapidly increasing (radial) weighted Bergman space and their applications to differential equations, see \cite{PR2014} and  \cite{PR2015}.
In recent papers \cite{PR2015} and \cite{PR2016}, Pel\'{a}ez and R\"{a}tty\"{a} characterized the bounded and Schatten class Toeplitz operators (induced by a positive Borel measure) on a weighted Bergman space, here the weight is a radial function satisfying the doubling property $\int_r^1 \omega(s)ds \leqslant C\int_{\frac{1+r}{2}}^1 \omega(s)ds.$  The first results of non-radial weighted Bergman space seems to be due to Luecking (\cite{Luecking1985}) who investigated  the structure of weighted Bergman space with B\'{e}koll\'{e}-Bonami weight. Based on Luecking's representation and duality theorems in \cite{Luecking1985},  Chac\'{o}n (\cite{Cha}) and Constantin  (\cite{Con2007}, \cite{Con}) studied the boundedness and compactness of Toeplitz operators on certain weighted Bergman spaces. In \cite{MW2014}, Mitkovski and Wick established a reproducing kernel thesis for operators on Bergman type space, and their definitions include weighted versions of Bergman spaces on more complicated domains.

We will be primarily interested in weighted harmonic Bergman space $L_h^2(\omega)$.  Our choice of the weight $\omega$ is motivated by the  characterization of boundedness of $P_h$ acting on $L^2(\omega)$, where $P_h$ is the unweghted harmonic Bergman (orthogonal) projection from $L^2(dA)$ to $L_h^2(dA)$. It is well known that  $L_h^2(dA)$ is a reproducing kernel Hilbert space and
\begin{equation*}
P_hf(z)=\int_\dd f(\lambda) \bigg[\frac{1}{(1-\overline{\lambda }z)^2}+\frac{1}{(1-\overline{z}\lambda)^2}-1\bigg]dA(\lambda).
\end{equation*}

Clearly, $P_h$ is a Calder\'{o}n-Zygmund operator on the homogeneous space $(\dd,d,dA)$,  where $d$ is the Euclidean distance and $dA$ is the Lebesgue area measure on  $\mathbb D$, normalized so that the measure of $\mathbb D$ is $1$.
 For the  definitions of Calder\'{o}n-Zygmund operator and homogeneous space, we refer to \cite{AV2014}.
%though it certainly includes the Bergman projection $P$ and harmonic Bergman projection $P_h$. For completeness, we include the definition of a space of homogeneous type. It is a triple $(X,\varrho,\mu)$,  where $X$ is a set, $\varrho$ is a quasimetric, i.e.,
%\begin{enumerate}
%  \item $\varrho(x,y)=0$ if and only if $x=y$;
%  \item $\varrho(x,y)=\rho(y,x)$;
%  \item there is a constant $C_0$ such that $$\varrho(x,z) \leqslant  C_0[\varrho(x,y)+\varrho(y,z)]$$ for all $x,y,z\in X$,
%\end{enumerate}
%and the positive measure $\mu$ is doubling, that is,  there exists a constant $C_1$ such that
%\begin{equation*}
%0<\mu(B(x,2r))\leqslant C_1\mu(B(x,r))
%\end{equation*}
%for all $x\in X$ and $0<r<\infty$, where $B(x,r)=\{y\in X:\varrho(x,y)<r\}$.

The most successful understanding of the (one) weight theory of Calder\'{o}n-Zgumund operator was spurred by Munckenhoupt's work in 1970s (\cite{MW1974}), which led to the introduction of the class of $\mathcal{A}_p$ weight and developments of weighted inequality. We will restrict our attentions to $\mathcal{A}_2$ weight on $(\dd, d, dA)$.

Let $0<\omega\in L^1(\dd, dA)$, it is called a Muckenhoupt $\mathcal{A}_2$ weight if
$$[\omega]_{\mathcal{A}_2}:=\sup_{a\in\dd,\  0<r<1} \frac{|B(a,r)|_{\omega}|B(a,r)|_{\omega^{-1}}}{|B(a, r)|^2} <+\infty,$$
where $$B(a,r)=\{z\in\dd: d(a,z)=|z-a|<r\},$$
$$|B(a,r)|_\omega=\int_{B(a, r)}\omega(z) dA(z)$$ and $|\cdot|$ is the normalized Lebesgue measure on $\dd$.

It follows form the remarkable $\mathcal{A}_2$ theorem (\cite{AV2014},\cite{Hy2012}) that \emph{$P_h$ is  bounded  from $L^2(\omega)$ to $L_h^2(\omega)$ provided $\omega$ is a Muckenhoupt $\mathcal{A}_2$ weight.}  As mentioned above, we will focus on  the weighted harmonic Bergman space $L_h^2(\omega)$ with  $\omega\in \mathcal{A}_2$.  Little is known about this nature function  space. However,  we will see in Section 2  that $L_h^2(\omega)$ is  a reproducing kernel Hilbert space with  the reproducing kernel  $K^\omega_z(\lambda)$, i.e.,
$f(z)=\langle f,K^\omega_z\rangle_{L^2(\omega)}$ for all  $f$  in $L_h^2(\omega)$.

For a positive finite Borel measure $\nu$ on $\mathbb D$,  we densely define the Toeplitz operator $T_\nu$ on  $L_h^2(\omega)$ by
 $$T_\nu f(z)=\big\langle T_\nu f, K^{\omega}_z \big\rangle_{L^2(\omega)}=\int_{\mathbb D}f(\lambda)\overline{K^{\omega}_z(\lambda)}d\nu(\lambda) \ \ \  \big(z\in \mathbb D\big).$$
For a bounded function $\varphi$, using the integral representation for the projection operator (from $L^2(\omega)$ to $ L_h^2(\omega)$), we can express the Toeplitz operator $T_\varphi$
(on $L_h^2(\omega)$) as the following:
$$T_\varphi f (z)=\int_{\mathbb D}f(\lambda) \overline{K^{\omega}_z(\lambda)} \varphi(\lambda)  \omega(\lambda)dA(\lambda) \ \  \ \big(z\in \mathbb D\big).$$

Although we follow Luecking's methods in \cite{Luecking1985} and \cite{Luecking AJM} for the weighted Bergman spaces, some new difficulties arise in the study of the space $L_h^2(\omega)$ and the corresponding operators. For instance,  harmonic
functions do not share many powerful tools with analytic functions. One can use the Cauchy formula to estimate the local values of  analytic functions easily. However, because of the tedious remainder, the harmonic version Cauchy formula (known as Cauchy-Pompeiu formula) is not valid now.
We instead must rely on some known estimations  on  harmonic functions.  In addition,  just as the weighted Bergman space, one cannot write down an explicit formula for  the reproducing kernel for $L_h^2(\omega)$. To overcome this obstacle, we will use the  reproducing kernel for the unweighted space $L_h^2$ to help us  study the representation theory of $L_h^2(\omega)$. However, the properties of reproducing kernel for $L_h^2$ are much more complicated than that of Bergman space $L_a^2$.

Using some properties of harmonic functions and $\mathcal {A}_2$ weights, we establish two different atomic decompositions for functions in $L_h^2(\omega)$ (Theorems \ref{Atomic Decomposition} and \ref{Atomic Decomposition cor}), which extend the representation theorems in \cite{Luecking1985} to the harmonic case.

In Section 3,  we  characterize the boundedness, compactness and  Schatten $p$ class of Toeplitz operators $T_\nu$ on $L_h^2(\omega)$  by means of  Berezin transform and Carleson measure. We are pleasure to  mention here that Miao (\cite{Miao}) have obtained characterizations for the Toeplitz operators with nonnegative symbols to be bounded, compact and in Schatten classes on unweighted harmonic Bergman space $L_h^2$.

Section 4 of this paper is devoted to studying the invertibility of Toeplitz operators on the standard weighted  harmonic Bergman space $L^2_h(\omega_\alpha)$.  A little surprising to us, the results illustrate that  the invertibility of Toeplitz operators on Bergman space $L_a^2$ can imply a reverse Carleson  inequality for $L_h^2(\omega_\alpha)$, see  Theorems \ref{invertibility1} and  \ref{invertibility2}. Based on this inequality, we  generalize  the result on the invertibility of  Bergman Toeplitz operators  with nonnegative symbols (\cite{Zhao}) to the case of $L_h^2(\omega_\alpha)$. As a consequence, we obtain a relationship of the invertibility  between  Toeplitz operators with bounded nonnegative symbols on $L_a^2(\omega_\alpha)$ and $L_h^2(\omega_\alpha)$, see Corollary \ref{invertibility3}.

In the final section, we establish a reverse Carleson type inequality for $L_h^2(\omega)$ with $\omega\in\mathcal{A}_2$. Indeed, we obtain a sufficient condition for $\chi_G dA$ to be a reverse Carleson measure for $L_h^2(\omega)$,  where $G$ is a measurable set in $\mathbb D$, see Theorem \ref{the weight w}, which  extends Theorem 3.9 for the weighted  (analytic) Bergman space in \cite{Luecking1985} to the harmonic setting.

Throughout the paper, positive constants will be explicitly denoted by $C, C_0, C_1,\cdots$, which may depend on some fixed numbers and
change at each occurrence.

\section{The Space $L_h^2(\omega)$ and its Representation}

In this section, we present some elementary structures of $L_h^2(\omega)$ with $\omega \in \mathcal A_2$.  To study the
harmonic Bergman spaces, we need the following important properties of harmonic functions.
\begin{lemma}\label{subharmonic behavior of harmonic functions} (\cite{Kuran})
Suppose that $f$ is a harmonic function on the disk $\mathbb D$ and $0<p<\infty$. There exists a positive constant
$C=C(p)$ such that for every ball  $B(a, r)=\{z\in \mathbb D: |z-a|<r\}$ in $\mathbb D$,
$$|f(a)|^p\leqslant \frac{C}{|B(a,r)|} \int_{B(a,r)} |f(z)|^p dA(z).$$
 In particular, if $p\geqslant 1$, then the constant $C\equiv 1$.
Using this result one can get the following useful inequalities easily: given $0<p<\infty$ and $0<r<1$, there exist  positive constants
$C_1=C_1(p)$ and $C_2=C_2(p)$  such that
$$|f(a)|^p \leqslant \frac{C_1}{(1-r)^{2}}\cdot \frac{1}{|D(a,r)|} \int_{D(a,r)} |f(z)|^p dA(z) \ \  \  \big(a \in \mathbb D\big)$$
and
$$|\partial f(a)|^p \leqslant \frac{C_2}{(1-r)^{2+p}}\cdot \frac{1}{|D(a,r)|^{\frac{p+2}{2}}} \int_{D(a,r)} |f(z)|^p dA(z) \ \  \  \big(a \in \mathbb D\big)$$
for all $f$ harmonic on $\mathbb D$, where $\partial f= \frac{\partial f}{\partial z}$.
\end{lemma}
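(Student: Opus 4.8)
The first assertion is precisely the mean value inequality for $|f|^p$ established by Kuran in \cite{Kuran}, so I would take it as given; I only recall why $C\equiv 1$ when $p\geqslant 1$. In that range $|f|^p$ is subharmonic, being the composition of the subharmonic function $|f|$ with the convex increasing map $t\mapsto t^p$, and the sub-mean value property yields the inequality with constant one, whereas for $0<p<1$ the function $|f|^p$ need not be subharmonic and only a dimensional constant $C(p)$ survives. The plan for the two ``useful inequalities'' is then to transfer this Euclidean mean value estimate to the pseudohyperbolic disk $D(a,r)$ by inscribing an honest Euclidean ball centered at $a$.

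To prove the first inequality, recall that $D(a,r)$ is a Euclidean disk with center $c=\frac{(1-r^2)a}{1-r^2|a|^2}$ and Euclidean radius $R=\frac{(1-|a|^2)r}{1-r^2|a|^2}$, so that $|D(a,r)|=R^2$ in the normalized measure. Since $c$ lies on the segment from $0$ to $a$ with $|c|\leqslant|a|$, the ball $B(a,\delta)$ with $\delta:=R-|a-c|=\frac{r(1-|a|^2)}{1+r|a|}$ is contained in $D(a,r)$. A short computation gives $\delta=(1-r|a|)R$, hence $|B(a,\delta)|=\delta^2=(1-r|a|)^2|D(a,r)|\geqslant(1-r)^2|D(a,r)|$. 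Applying the mean value inequality on $B(a,\delta)$ and enlarging the domain of integration to $D(a,r)$,
\[
|f(a)|^p\leqslant\frac{C}{|B(a,\delta)|}\int_{B(a,\delta)}|f|^p\,dA\leqslant\frac{C}{(1-r)^2|D(a,r)|}\int_{D(a,r)}|f|^p\,dA,
\]
which is the first inequality with $C_1=C$.

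For the derivative estimate I would first record an interior gradient estimate valid for every $0<p<\infty$: if $f$ is harmonic on $\overline{B(a,t)}\subset\mathbb D$, then
\[
|\nabla f(a)|^p\leqslant\frac{C(p)}{t^p\,|B(a,t)|}\int_{B(a,t)}|f|^p\,dA.
\]
For $p\geqslant 1$ this follows from the mean value property of the harmonic vector field $\nabla f$, the divergence theorem and Jensen's inequality; for $0<p<1$ one combines the classical interior bound $|\nabla f(a)|\leqslant\frac{C}{t}\sup_{B(a,t/2)}|f|$ with the first part of the lemma applied at each $w\in B(a,t/2)$ (using $B(w,t/2)\subset B(a,t)$) to replace the supremum by an $L^p$ average. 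Since $\partial f=\frac12(\partial_x f-i\partial_y f)$ satisfies $|\partial f|\leqslant|\nabla f|$, taking $t=\delta$ and again enlarging the integration domain gives
\[
|\partial f(a)|^p\leqslant\frac{C}{\delta^p|B(a,\delta)|}\int_{D(a,r)}|f|^p\,dA=\frac{C}{\delta^{p+2}}\int_{D(a,r)}|f|^p\,dA,
\]
and inserting $\delta=(1-r|a|)|D(a,r)|^{1/2}\geqslant(1-r)|D(a,r)|^{1/2}$ yields the second inequality with $C_2=C$.

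The only genuinely delicate points are geometric and concern the exponent $p<1$. One must verify the inscribed-ball identity $\delta=(1-r|a|)R$ so that the area ratio is exactly $(1-r|a|)^2$, and one must secure the gradient estimate uniformly down to $p<1$, where $|f|^p$ fails to be subharmonic, so that the passage from the supremum to an $L^p$ average must be routed through the interior bound together with Kuran's inequality rather than through subharmonicity directly. Once these are in place the powers of $(1-r)$ and of $|D(a,r)|$ fall out of the explicit value of $\delta$.
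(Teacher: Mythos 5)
Your proof is correct, and it follows the route the paper intends: the paper gives no proof at all, citing Kuran for the mean value inequality and asserting the two pseudohyperbolic consequences as easy, which is exactly what you supply. Your computations check out -- the inscribed ball radius is indeed $\delta=R-|a-c|=\frac{r(1-|a|^2)}{1+r|a|}=(1-r|a|)R$, so $|B(a,\delta)|=(1-r|a|)^2|D(a,r)|\geqslant(1-r)^2|D(a,r)|$, and your gradient estimate (via Jensen for $p\geqslant 1$, and via the interior bound plus Kuran's inequality at each $w\in B(a,t/2)$ for $0<p<1$, where subharmonicity of $|f|^p$ fails) delivers precisely the stated powers $(1-r)^{2+p}$ and $|D(a,r)|^{(p+2)/2}$.
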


\emph{Remark 1. }From the above inequalities, it is easy to  show that point evaluations are bounded linear functionals on $L_h^2(\omega)$ with $\omega \in \mathcal A_2$. As a consequence, $L_h^2(\omega)$ is a reproducing kernel  Hilbert space.

\emph{Remark 2.} It is not clear whether  $L^2_h(\omega)$ is complete for a general weight. However, if  $p$ is an analytic polynomial on $\mathbb D$ and  $\omega(z)=|p(z)|^2$, Douglas and Wang \cite{DW2011} showed  that $L_a^2(\omega)$ is complete, whose proof heavily depends on some  particular properties of polynomials.

It is clear that $L_h^2(\omega)$ coincides with its dual space with respect to the $L^2(\omega)$ inner product.  The next result illustrates that the dual space of $L_h^2(\omega)$ can be identified with $L^2_h(\omega^{-1})$ via the unweighted inner product,  which generalizes Luecking's result for $L_a^2(\omega)$ (see Theorem 2.1 in \cite{Luecking1985}) to the  setting  of $L_h^2(\omega)$.
\begin{lemma}\label{duality}
  Suppose that $\omega$ is an $\mathcal{A}_2$ weight. Then the dual space of $L_h^2(\omega)$ can be identified with $L_h^2(\omega^{-1})$. The pairing is given by
$$\langle f, g\rangle=\int_{\mathbb D} f(z)\overline{g(z)}dA(z).$$
Consequently, there exists a bounded,  bijective and linear operator $\mathscr{F}: L_h^2(\omega^{-1})\rightarrow (L_h^2(\omega))^*$ such that
$$\mathscr{F}(f)(g)=\langle g, \overline{f}\rangle_{L^2(dA)}$$
for $f\in L_h^2(\omega^{-1})$ and $g\in L_h^2(\omega)$.
\end{lemma}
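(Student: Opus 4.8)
The plan is to realise the announced isomorphism $\mathscr{F}$ in three steps: show that the unweighted pairing makes $\mathscr{F}$ a bounded map into $(L_h^2(\omega))^*$, show it is injective, and show it is surjective. The first step is just H\"older's inequality: for $f\in L_h^2(\omega^{-1})$ and $g\in L_h^2(\omega)$ one writes $gf=(g\,\omega^{1/2})(f\,\omega^{-1/2})$ and gets $|\mathscr{F}(f)(g)|=|\int_{\mathbb D}gf\,dA|\le \|g\|_{L^2(\omega)}\|f\|_{L^2(\omega^{-1})}$, so $\mathscr{F}$ is linear with $\|\mathscr{F}(f)\|\le\|f\|_{L^2(\omega^{-1})}$. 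The same estimate (with $\omega\in L^1$) shows every $f\in L_h^2(\omega^{-1})$ lies in $L^1(\mathbb D,dA)$, so the moments $\int_{\mathbb D}z^kf\,dA$ and $\int_{\mathbb D}\bar z^kf\,dA$ are finite. For injectivity I would test $\mathscr{F}(f)=0$ against the harmonic polynomials, which belong to $L_h^2(\omega)$ since they are bounded and $\omega$ is integrable; the orthogonality relations for $z^m\bar z^n$ over $\mathbb D$ then force every Taylor and anti-Taylor coefficient of the harmonic function $f$ to vanish, whence $f=0$.

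Surjectivity is the heart of the matter, and it is where the $\mathcal A_2$ hypothesis enters decisively. Given $\Lambda\in(L_h^2(\omega))^*$, extend it by Hahn-Banach to a bounded functional on the Hilbert space $L^2(\omega)$ and represent it via Riesz: there is $h\in L^2(\omega)$ with $\Lambda(g)=\int_{\mathbb D}g\,\bar h\,\omega\,dA$ for all $g\in L_h^2(\omega)$. The candidate representing function is $f:=\overline{P_h(h\omega)}$, which is harmonic since $P_h(h\omega)$ is. Two facts make this work. First, $h\omega\in L^2(\omega^{-1})$ because $\int_{\mathbb D}|h\omega|^2\omega^{-1}\,dA=\int_{\mathbb D}|h|^2\omega\,dA=\|h\|_{L^2(\omega)}^2<\infty$. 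Second, since the quantity defining $[\cdot]_{\mathcal A_2}$ is symmetric in $\omega$ and $\omega^{-1}$, we have $\omega^{-1}\in\mathcal A_2$, so the $\mathcal A_2$ theorem gives that $P_h$ is bounded on $L^2(\omega^{-1})$; hence $P_h(h\omega)\in L_h^2(\omega^{-1})$ and $f\in L_h^2(\omega^{-1})$ with $\|f\|_{L^2(\omega^{-1})}\le C\|h\|_{L^2(\omega)}$. It then remains to identify $\Lambda$ with $\mathscr{F}(f)$, i.e.\ to verify $\int_{\mathbb D}g\,\bar h\,\omega\,dA=\int_{\mathbb D}g\,\overline{P_h(h\omega)}\,dA$ for every $g\in L_h^2(\omega)$.

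This last identity is exactly the self-adjointness of $P_h$ in the unweighted pairing combined with the reproducing property $P_h g=g$, and it is the step I expect to be the main obstacle: a function $g\in L_h^2(\omega)$ need not belong to $L^2(\mathbb D,dA)$, so neither $P_h g$ nor the $L^2(dA)$-self-adjointness of $P_h$ is available off the shelf. I would resolve this by approximation. For $0<r<1$ set $g_r(z)=g(rz)$; each $g_r$ is harmonic on a neighbourhood of $\overline{\mathbb D}$, hence bounded, so $g_r\in L_h^2(dA)$ and the reproducing formula for the unweighted kernel gives $P_h g_r=g_r$ outright. Because that kernel is real and symmetric and $h\omega\in L^1(\mathbb D,dA)$, a Fubini interchange yields $\int g_r\,\bar h\,\omega\,dA=\int (P_h g_r)\,\bar h\,\omega\,dA=\int g_r\,\overline{P_h(h\omega)}\,dA$. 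Letting $r\to1^-$ and using uniform boundedness of the dilations on $L^2(\omega)$ together with $g_r\to g$ in $L^2(\omega)$ (which is where the doubling/$\mathcal A_2$ structure of $\omega$ is used), both sides pass to the limit by H\"older, giving the desired identity for $g$. The two points needing care are the Fubini justification, handled by the slow (logarithmic) growth of $\int|R(z,\cdot)|\,dA$ against the higher local integrability of $\mathcal A_2$ weights, and the dilation convergence $g_r\to g$, which I would derive from uniform boundedness of $g\mapsto g_r$ plus convergence on bounded harmonic functions. Once this identity is in hand $\mathscr{F}(f)=\Lambda$, so $\mathscr{F}$ is onto; being also bounded and injective, it is the claimed isomorphism of $L_h^2(\omega^{-1})$ onto $(L_h^2(\omega))^*$.
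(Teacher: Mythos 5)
Your overall skeleton is the same as the paper's, which simply transplants Luecking's proof of Theorem 2.1 of \cite{Luecking1985}: H\"older's inequality for boundedness, Hahn--Banach plus Riesz representation on $L^2(\omega)$, the symmetry of the $\mathcal{A}_2$ condition to get $P_h$ bounded on $L^2(\omega^{-1})$, the observation $h\omega\in L^2(\omega^{-1})$, and then the self-adjointness of $P_h$ in the unweighted pairing combined with the reproducing identity $P_h g=g$. Up to that point your steps are correct, including the Fubini justification (the $\mathcal{A}_\infty$ property of $\mathcal{A}_2$ weights forces power decay of $|\{z:1-|z|<t\}|_\omega$, against which the logarithmic growth of $\int_{\mathbb D}|R_z|\,dA$ is harmless) and the injectivity via moments against harmonic polynomials.

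The genuine gap is your final limiting step. You need $g_r\to g$ in $L^2(\omega)$ --- or at least $\sup_{0<r<1}\|g_r\|_{L^2(\omega)}<\infty$, which with pointwise convergence would give weak convergence and suffice, since both of your functionals are bounded on $L_h^2(\omega)$ --- and you propose to obtain this from uniform boundedness of the dilations plus their convergence on bounded harmonic functions. Neither ingredient is established. Uniform boundedness of $g\mapsto g_r$ on $L^2(\omega)$ for a general non-radial $\mathcal{A}_2$ weight is not an off-the-shelf fact: the dilation moves $z$ to $rz$, and $\rho(z,rz)=(1-r)|z|/(1-r|z|^2)\to 1$ as $|z|\to 1^-$, so there is no uniform pseudohyperbolic comparison between the $\omega$-averages at $z$ and at $rz$, and the subharmonicity/Fubini scheme of Proposition~\ref{mu Carleson measure} does not close; you give no substitute argument. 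Moreover, the density of bounded harmonic functions in $L_h^2(\omega)$, which your second ingredient presupposes, is in this paper a consequence of Theorem~\ref{Atomic Decomposition}, whose proof uses the present lemma --- so invoking it here is circular. The paper sidesteps the dilations entirely by proving $P_h g=g$ directly for \emph{every} $g\in L_h^2(\omega)$: since $g\in L^1(dA)$ (Cauchy--Schwarz with $\omega^{-1}\in L^1$) and $R_z$ is bounded for fixed $z$, one writes
$$P_h g(z)=\lim_{s\to 1^-}\int_{s\mathbb D} g(\lambda)R_z(\lambda)\,dA(\lambda)$$
and integrates the expansion $g=\sum_n a_n z^n+\sum_n b_n\overline{z}^n$ term by term using orthogonality. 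With $P_h g=g$ in hand, and the self-adjointness $\langle P_h u,v\rangle=\langle u,P_h v\rangle$ for $u\in L^2(\omega)$, $v\in L^2(\omega^{-1})$ extended from bounded functions by truncation, Luecking's computation $\Lambda(g)=\langle g,h\omega\rangle=\langle P_h g,h\omega\rangle=\langle g,P_h(h\omega)\rangle$ closes immediately; your dilation detour is then unnecessary, and I recommend replacing your last paragraph by this direct argument.
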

\begin{proof}  Let $\omega$ be an $\mathcal {A}_2$ weight. Recall that orthogonal projection $P_h:L^2(dA)\to L^2_h(dA)$ is a Calder\'{o}n-Zygmund operator on $(\dd,d,dA)$. Then $P_h$ is bounded on both $L^2(\omega)$ and $L^2(\omega^{-1})$.
Thus for  each  $f\in L^2(\omega)$ and $g\in L^2(\omega^{-1})$, we have
$$\langle P_h f , g\rangle=\langle f, P_h g \rangle.$$
Note that each $f\in L_h^2(\omega)$  (or $f\in L_h^2(\omega^{-1})$) has the following form:
$$f(z)=\sum_{n=0}^\infty a_nz^n +\sum_{n=1}^\infty b_n \overline{z^n},$$
thus we obtain
\begin{align*}
P_hf(z)&= \int_{\mathbb D} f(\lambda)R_z(\lambda)dA(\lambda)\\
&=\lim_{s\rightarrow 1^{-}}\int_{s\mathbb D} f(\lambda)\Big[\frac{1}{(1-\overline{z}\lambda)^2}+\frac{1}{(1-\overline{\lambda}z)^2}-1\Big]dA(\lambda)\\
&=f(z)
\end{align*}
for $z\in \mathbb D$.
Then the rest of this proof are exactly the same as the one of Theorem 2.1 in \cite{Luecking1985}, we omit the details.
\end{proof}

Let $a\in \dd$ and $0<r<1$. A pseudohyperbolic disk $D(a, r)$ is defined by
\begin{equation*}D(a, r)=\bigg\{z\in \mathbb D: \rho(z, a)=\Big|\frac{z-a}{1-\overline{a}z}\Big|<r\bigg\}.
\end{equation*}

We will frequently use the following  property of $\mathcal{A}_2$ weights on pseudohyperbolic disks. For the sake of complete, we include a proof of this fact as follows.

\begin{lemma}\label{two disks}
Let $0<r \leqslant \frac{1}{4}$ and $z\in \mathbb D$. If $\xi\in D(z,r)$, then we have
\begin{equation*}
|D(z,r)|_\omega < 8 [\omega]_{\mathcal{A}_2} |D(\xi,r)|_\omega.
\end{equation*}
\end{lemma}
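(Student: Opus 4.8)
The plan is to turn the two pseudohyperbolic disks into genuine Euclidean balls and then let the $\mathcal{A}_2$ hypothesis act, after first reducing to two \emph{nested} disks with a common pseudohyperbolic centre. Recall that $D(a,s)$ is exactly the Euclidean disk with centre $\frac{(1-s^2)a}{1-s^2|a|^2}$ and radius $R_{a,s}=\frac{(1-|a|^2)s}{1-s^2|a|^2}$, so its normalized area is $|D(a,s)|=R_{a,s}^2$; being a Euclidean ball compactly contained in $\mathbb D$, it is an admissible test ball in the definition of $[\omega]_{\mathcal{A}_2}$. This is the reduction that makes the weight hypothesis usable. Now, since $\rho(z,\xi)<r$, the strong triangle inequality $\rho(\xi,w)\le\frac{\rho(\xi,z)+\rho(z,w)}{1+\rho(\xi,z)\rho(z,w)}$ gives, for every $w\in D(z,r)$, the bound $\rho(\xi,w)<\frac{2r}{1+r^2}<1$; hence $D(z,r)\subseteq D\big(\xi,\tfrac{2r}{1+r^2}\big)\subseteq D(\xi,2r)$, so that $|D(z,r)|_\omega\le|D(\xi,2r)|_\omega$. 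It therefore suffices to compare $E:=D(\xi,r)$ with $B:=D(\xi,2r)$, where $E\subseteq B$.

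The comparison of $E\subseteq B$ is then pure $\mathcal{A}_2$. By Cauchy--Schwarz, $|E|=\int_E\omega^{1/2}\omega^{-1/2}\,dA\le|E|_\omega^{1/2}|B|_{\omega^{-1}}^{1/2}$, and the $\mathcal{A}_2$ bound $|B|_\omega|B|_{\omega^{-1}}\le[\omega]_{\mathcal{A}_2}|B|^2$ then yields the reverse-doubling inequality
\[
|D(\xi,2r)|_\omega\le[\omega]_{\mathcal{A}_2}\Big(\frac{|D(\xi,2r)|}{|D(\xi,r)|}\Big)^2|D(\xi,r)|_\omega .
\]
The Euclidean area ratio is computed explicitly from the radius formula, $\frac{|D(\xi,2r)|}{|D(\xi,r)|}=\Big(\frac{R_{\xi,2r}}{R_{\xi,r}}\Big)^2=\Big(\frac{2(1-r^2|\xi|^2)}{1-4r^2|\xi|^2}\Big)^2$, and the restriction $r\le\tfrac14$ keeps $1-4r^2|\xi|^2\ge\tfrac34$, so this ratio stays below an absolute constant uniformly in $\xi$. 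Combining this with $|D(z,r)|_\omega\le|D(\xi,2r)|_\omega$ gives an estimate of the desired shape $|D(z,r)|_\omega\le C\,[\omega]_{\mathcal{A}_2}|D(\xi,r)|_\omega$.

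The one genuinely delicate point is the size of the absolute constant $C$: the square in the reverse-doubling inequality is the bottleneck, and the crude inclusion $D(z,r)\subseteq D(\xi,2r)$ alone produces a prefactor well above the target $8$. To land on the clean constant one must use $r\le\tfrac14$ sharply---replacing the wasteful factor-$2$ step by the tighter inclusion $D(z,r)\subseteq D\big(\xi,\frac{2r}{1+r^2}\big)$, and exploiting the inner/outer Euclidean balls $B(\xi,\tfrac34 R_{\xi,r})\subseteq D(\xi,r)$ and $D(z,r)\subseteq B(\xi,\kappa R_{\xi,r})$ (with $\kappa$ explicit) coming from $|\,\xi-\text{centre of }D(\xi,r)\,|=|\xi|r\,R_{\xi,r}\le\tfrac14 R_{\xi,r}$. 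The worst case throughout is $\xi$ sitting near the pseudohyperbolic boundary of $D(z,r)$, where the radii $R_{z,r},R_{\xi,r}$ and the Euclidean offset between the two disks are simultaneously largest; verifying that $r\le\tfrac14$ still brings the product of all these geometric factors under $8$ is the crux, the remaining steps being the routine $\mathcal{A}_2$ manipulation recorded above.
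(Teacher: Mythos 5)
Your argument is the paper's, step for step: the inclusion $D(z,r)\subset D(\xi,2r)$ via the pseudohyperbolic triangle inequality, then the $\mathcal{A}_2$ condition on the larger disk combined with Cauchy--Schwarz on the smaller one, then the explicit centre/radius formula to control the Euclidean area ratio. The one place you diverge is the point you flag as the crux, and there your arithmetic is right and the paper's is wrong. The reverse-doubling step produces the factor $\bigl(|D(\xi,2r)|/|D(\xi,r)|\bigr)^{2}$, and since the normalized area of a Euclidean disk is the \emph{square} of its radius, this factor equals $16\bigl(\tfrac{1-r^{2}|\xi|^{2}}{1-4r^{2}|\xi|^{2}}\bigr)^{4}\leqslant 16\,(5/4)^{4}=625/16<40$. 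The paper's third display instead inserts $4\bigl(\tfrac{1-r^{2}|\xi|^{2}}{1-4r^{2}|\xi|^{2}}\bigr)^{2}$, i.e.\ the first power of the area ratio (equivalently, it conflates area with radius), and only through this slip does the chain land under $8$. Done correctly, the method yields exactly your constant, roughly $40\,[\omega]_{\mathcal{A}_2}$; so the sharpened inclusions you sketch at the end in the hope of recovering $8$ are not needed to match the paper --- no such refinement appears there, and whether $8$ is attainable at all is simply not settled by this route. Fortunately the precise constant is immaterial: every subsequent use of Lemma \ref{two disks} (in Proposition \ref{mu Carleson measure}, Lemma \ref{double intrgral estimation}, Proposition \ref{mu reverse Carleson measure}, and the Toeplitz-operator theorems) invokes only the existence of an absolute constant $C$ with $|D(z,r)|_\omega\leqslant C\,[\omega]_{\mathcal{A}_2}|D(\xi,r)|_\omega$, which your proof delivers. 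In short: your proof is correct, takes the same route as the paper, and proves the lemma with $625/16$ in place of $8$; the stated constant $8$ rests on an arithmetic error in the paper's own computation, so you should regard your honestly flagged ``gap'' as a defect of the statement, not of your argument.
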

\begin{proof} Observe that $D(z, r) \subset D(\xi, 2r)$. Now it suffices to show the following doubling inequality:
\begin{equation*}\label{E:doubling}
|D(\xi, 2 r)|_\omega < 8 [\omega]_{\mathcal{A}_2}|D(\xi,r)|_\omega \ \ \   \big(\xi \in \mathbb D \big).
\end{equation*}

Since $\omega$ is an $\mathcal {A}_2$weight, we have
\begin{equation*}
\frac{|D(\xi, 2r)|_\omega|D(\xi,2r)|_{\omega^{-1}}}{|D(\xi, 2r)|^2} \leqslant [\omega]_{\mathcal{A}_2}.
\end{equation*}
Recall that a pseudohyperbolic disk $D(z,r)$ is a Euclidean disk with center and radius given by
\begin{equation*}
\mathfrak{C}=\frac{1-r^2}{1-r^2|z|^2}z,\quad \mathfrak{R}=\frac{1-|z|^2}{1-r^2|z|^2}r.
\end{equation*}
Combining the above with Cauchy-Schwarz's inequality gives
\begin{align*}
|D(\xi, 2r)|_\omega &\leqslant [\omega]_{\mathcal{A}_2} \frac{|D(\xi, 2 r)|^2}{|D(\xi, 2r)|_{\omega^{-1}}}\\
&\leqslant [\omega]_{\mathcal{A}_2}\frac{|D(\xi, r)|^2}{|D(\xi, r)|_{\omega^{-1}}} \cdot \frac{|D(\xi, 2 r)|^2}{|D(\xi, r)|^2}\\
&\leqslant 4[\omega]_{\mathcal{A}_2} |D(\xi, r)|_\omega  \cdot \bigg(\frac{1-r^2|\xi|^2}{1-4r^2|\xi|^2}\bigg)^2  \\
&< 8 [\omega]_{\mathcal{A}_2} |D(\xi, r)|_\omega,
\end{align*}
where the last inequality is due to $r \leqslant\frac{1}{4}$.
This completes the proof of Lemma \ref{two disks}.
\end{proof}

We now turn to the representation theory of the space $L_h^2(\omega)$.
These results and their proof strategies are motivated by Luecking's works on weighted Bergman space (\cite{Luecking1985}, \cite{Luecking AJM}).

Before studying the representation theory of $L_h^2(\omega)$, we need to recall the concept of $\epsilon$-lattice in the unit disk.
Let $\epsilon \in (0, 1)$, a sequence $\{a_n\}_{n=1}^{\infty}$ in the unit disk is  called an $\epsilon$-lattice in the pseudo-hyperbolic metric if
\begin{itemize}
  \item $\mathbb D=\bigcup_{n=1}^\infty D(a_n, \epsilon)$ and
  \item $\inf\limits_{n \neq m} \Big|\frac{a_n-a_m}{1-\overline{a_n} a_m}\Big|\geqslant \frac{\epsilon}{2}.$
\end{itemize}

Now, we are ready to state the atomic decomposition for $L^2_h(\omega)$.
\begin{thm}\label{Atomic Decomposition}
Let $\omega$ be an $\mathcal {A}_2$ weight. Then there is an $\epsilon$-lattice $\{a_n\}_{n=1}^{\infty}$ such that for each $f\in L_h^2(\omega)$ we have
$$f(z)=\sum_{n=1}^\infty c_n (1-|a_n|^2)^2 |D(a_n, \epsilon)|_\omega^{-\frac{1}{2}} R_{a_n}(z) $$
for some sequence $\{c_n\}$ in $\ell^2(\mathbb N)$, where
$$R_\lambda(z)=\frac{1}{(1-\overline{z}\lambda)^2}+\frac{1}{(1-\overline{\lambda}z)^2}-1$$ is the reproducing kernel for $L_h^2$ at $\lambda\in\dd$.
\end{thm}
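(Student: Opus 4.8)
The plan is to realize the decomposition as the range of a bounded \emph{synthesis operator} and to produce the coefficients through a perturbation (Neumann series) argument, in the spirit of Luecking's treatment of $L_a^2(\omega)$. Fix an $\epsilon$-lattice $\{a_n\}$ and a measurable partition $\dd=\bigcup_n D_n$ with $a_n\in D_n\subset D(a_n,\epsilon)$; the separation condition guarantees that the dilated disks $\{D(a_n,\epsilon)\}$ have bounded overlap, and the Euclidean-disk description of $D(a,r)$ recalled in the proof of Lemma \ref{two disks} gives $|D(a_n,\epsilon)|\asymp(1-|a_n|^2)^2$. Writing $\beta_n=(1-|a_n|^2)^2|D(a_n,\epsilon)|_\omega^{-1/2}$, the map $S(\{c_n\})=\sum_n c_n\beta_n R_{a_n}$ is the synthesis operator whose surjectivity onto $L_h^2(\omega)$ is exactly the assertion.

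First I would prove that $S:\ell^2(\NN)\to L_h^2(\omega)$ is bounded. By the duality of Lemma \ref{duality}, this is equivalent to the estimate $\sum_n\beta_n^2|g(a_n)|^2\lesssim\|g\|_{L^2(\omega^{-1})}^2$ for $g\in L_h^2(\omega^{-1})$, since $\langle S(\{c_n\}),g\rangle_{L^2(dA)}=\sum_n c_n\beta_n\overline{g(a_n)}$ by the reproducing property of $R_{a_n}$. This estimate follows by combining the $p=1$ sub-mean-value inequality of Lemma \ref{subharmonic behavior of harmonic functions} for the harmonic function $g$, a Cauchy--Schwarz splitting of $dA=\omega^{-1/2}\omega^{1/2}dA$ over $D(a_n,\epsilon)$, the $\mathcal A_2$ bound $|D(a_n,\epsilon)|_\omega|D(a_n,\epsilon)|_{\omega^{-1}}\le[\omega]_{\mathcal A_2}|D(a_n,\epsilon)|^2$, and finally the bounded overlap of the disks. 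The same computation applied to $f\in L_h^2(\omega)$ (now splitting against $\omega$) shows that the analysis map $D:f\mapsto\{d_n\}$, with $d_n=\beta_n^{-1}\int_{D_n}f\,dA$, is bounded from $L_h^2(\omega)$ into $\ell^2$.

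Next I would introduce the approximation operator $T=S\circ D$, which on functions reads $Tf(z)=\sum_n\big(\int_{D_n}f\,dA\big)R_{a_n}(z)$, and compare it with the identity. Since $P_hf=f$ on $L_h^2(\omega)$ (established in the proof of Lemma \ref{duality}), the reproducing formula $f(z)=\int_\dd f(\lambda)R_z(\lambda)\,dA(\lambda)$ holds, and using the symmetry $R_z(a_n)=R_{a_n}(z)$ one sees that $I-T$ is the integral operator with kernel $R_z(\lambda)-R_z(a_{n(\lambda)})$, where $n(\lambda)$ denotes the index with $\lambda\in D_{n(\lambda)}$. The decisive step is the kernel estimate
$$|R_z(\lambda)-R_z(a_n)|\lesssim\epsilon\,|1-\overline{z}\lambda|^{-2}\qquad(\lambda\in D(a_n,\epsilon)),$$
obtained from a gradient bound of the form $|\nabla_\lambda R_z(\lambda)|\lesssim|1-\overline z\lambda|^{-3}$, the Euclidean radius estimate $|\lambda-a_n|\lesssim\epsilon(1-|\lambda|^2)$, and the comparability of $|1-\overline z w|$ as $w$ ranges over the pseudohyperbolic geodesic joining $\lambda$ to $a_n$. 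Granting this, $I-T$ is dominated by $\epsilon$ times the positive operator with kernel $|1-\overline z\lambda|^{-2}$, which I would show to be bounded on $L^2(\omega)$ by a Schur test using a power of $\omega$ as test function, controlled through Lemma \ref{two disks} and the $\mathcal A_2$ hypothesis. Thus $\|I-T\|_{L^2(\omega)\to L^2(\omega)}\le C\epsilon<1$ once $\epsilon$ is chosen small enough.

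Finally, with $T$ invertible on $L_h^2(\omega)$ by the Neumann series, any $f\in L_h^2(\omega)$ may be written as $f=T(T^{-1}f)=S\big(D(T^{-1}f)\big)$; setting $\{c_n\}=D(T^{-1}f)\in\ell^2$ yields the claimed representation. I expect the main obstacle to be the kernel step: unlike the analytic Bergman kernel, $R_z$ carries the extra remainder term, so both the pointwise difference estimate and the boundedness on $L^2(\omega)$ of the dominating positive operator must be handled through the harmonic-function estimates of Lemma \ref{subharmonic behavior of harmonic functions} and the $\mathcal A_2$ machinery, rather than by the Cauchy-formula arguments available in the analytic setting.
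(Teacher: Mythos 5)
Your argument is essentially correct, but it takes a genuinely different route from the paper. The paper never forms your operator $T=S\circ D$: it proves the two-sided sampling inequality $\sum_n|f(a_n)|^2|D(a_n,\epsilon)|_\omega\approx\|f\|_{L^2(\omega)}^2$ (Theorem \ref{L:key00}, via Propositions \ref{mu Carleson measure} and \ref{mu reverse Carleson measure}), applies it to the weight $\omega^{-1}$, and then gets the decomposition abstractly: the analysis map $\mathscr L g=\{g(a_n)(1-|a_n|^2)^2|D(a_n,\epsilon)|_\omega^{-1/2}\}$ is bounded above and below on $L_h^2(\omega^{-1})$, so by the closed range theorem $\mathscr L^*$ is surjective, and under the duality of Lemma \ref{duality} this adjoint is exactly your synthesis operator $S$. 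Your upper bound for $S$ (the Carleson half, for $\omega^{-1}$) coincides with Proposition \ref{mu Carleson measure}; but where the paper's lower bound comes from the oscillation estimate of Lemma \ref{gradient estimation} fed into the perturbation $I-J\leqslant C\epsilon\|f\|_{L^2(\omega)}$, you linearize instead through the Riemann-sum operator $Tf=\sum_n\big(\int_{D_n}f\,dA\big)R_{a_n}$ and a Neumann series. What your route buys is an explicit linear coefficient map $f\mapsto D(T^{-1}f)$; what it costs is the weighted boundedness of the positive operator $P^+$ with kernel $|1-\overline{z}\lambda|^{-2}$, which the paper's scheme never needs. (Note also that the paper's detour is not wasted: the sampling inequality is reused later, e.g.\ in Theorem \ref{Atomic Decomposition cor} and the Schatten-class results, whereas your argument yields only the decomposition itself.)

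That last step is also the one place where your justification is too thin: for a general $\mathcal A_2$ weight, a Schur test ``with a power of $\omega$'' is not known to work --- with the natural test function $h(\lambda)=(1-|\lambda|^2)^{-1/2}$ the second Schur condition becomes a pointwise domination of a weighted average of $\omega$ by $\omega(\lambda)$ itself, an $\mathcal A_1$-type condition that $\mathcal A_2$ does not supply, and this is precisely why weighted bounds for Bergman-type projections are usually proved by dyadic or testing methods rather than Schur tests. The gap is repairable within the paper's own toolkit: from $z-\lambda=z(1-\overline{z}\lambda)-\lambda(1-|z|^2)$ one gets $|z-\lambda|\lesssim|1-\overline{z}\lambda|$, so the kernel $|1-\overline{z}\lambda|^{-2}$ satisfies the Calder\'on--Zygmund size estimate on $(\mathbb D,d,dA)$, and the gradient bound you already use gives the smoothness estimate; moreover $P^+$ is bounded on unweighted $L^2(dA)$ by the Forelli--Rudin Schur test with $h(\lambda)=(1-|\lambda|^2)^{-1/2}$. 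Hence $P^+$ is a Calder\'on--Zygmund operator on the homogeneous space, and the $\mathcal A_2$ theorem quoted in the introduction (\cite{AV2014}, \cite{Hy2012}) yields $\|P^+\|_{L^2(\omega)\to L^2(\omega)}<\infty$, exactly as for $P_h$. With that substitution, and keeping your (correct) observations that $R_z(a_n)=R_{a_n}(z)$, that the constant term of $R_z$ cancels in the difference, and that the reproducing formula $f=P_hf$ holds on $L_h^2(\omega)$ by the proof of Lemma \ref{duality}, your proof is complete.
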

\emph{Remarks.}  We have the following estimate of the module of  $R_\lambda$: there exists an $r_0 \in (0, \frac{1}{4}]$  such that if $0<r\leqslant r_0$, then
$$\frac{\frac{1}{2}}{(1-|\lambda|)^2} \leqslant |R_\lambda(z)| \leqslant \frac{3}{(1-|\lambda|)^2}$$
for all $z \in D(\lambda, r)$. For the proof of this fact, we refer to Lemma 2.2 in \cite{Choi}. In what follows,  we will use $r_0$  to denote the constant provided in this remarks.

To prove Theorem \ref{Atomic Decomposition}, we need to establish a harmonic version of Luecking's theorems in \cite{Luecking1985} and \cite{Luecking AJM}.
\begin{thm}\label{L:key00}
Let $\omega$ be an $\mathcal {A}_2$ weight. Then there exists an $\epsilon$-lattice $\{a_n\}_{n=1}^\infty$  for some  $0<\epsilon<\frac{1}{16}$
such that
 \begin{equation*}
 \sum_{n=1}^\infty |f(a_n)|^2 |D(a_n, \epsilon)|_\omega \approx \|f\|_{L^2(\omega)}^2
 \end{equation*}
 for all $f$ in $L_h^2(\omega)$.  That is, there exist two positive  constant $C_1$ and $C_2$ such that
 \begin{equation*}\label{E:doublecarleson}
 C_1\|f\|_{L^2(\omega)}^2 \leqslant \sum_{n=1}^\infty |f(a_n)|^2 |D(a_n, \epsilon)|_\omega \leqslant C_2\|f\|_{L^2(\omega)}^2.
 \end{equation*}
 for all $f$ in $L_h^2(\omega)$.
\end{thm}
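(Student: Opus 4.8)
The plan is to prove the two inequalities separately after fixing an $\epsilon$-lattice $\{a_n\}$ whose parameter is small (to be pinned down in the absorption step, in particular $\epsilon<\frac{1}{16}$, so that all the dilated disks below stay in the range $r\leqslant\frac14$ required by Lemma \ref{two disks}). The existence of an $\epsilon$-lattice is standard: take a maximal $\frac{\epsilon}{2}$-separated set in the pseudohyperbolic metric. Two facts will be used repeatedly: (i) for any fixed $R<1$ the dilated disks $\{D(a_n,R)\}_n$ have bounded overlap, with a bound $N=N(R)$ independent of $\omega$ and $f$, which follows from the separation condition of the lattice; and (ii) the doubling properties of the $\omega$-mass (Lemma \ref{two disks}), which let me compare $|D(\xi,s)|_\omega$ for nearby centers, together with the trivial monotonicity $|D(a_n,\epsilon)|_\omega\leqslant|D(a_n,r)|_\omega$ when $\epsilon<r$.

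For the upper estimate I would first establish the pointwise inequality $|f(a_n)|^2\,|D(a_n,\epsilon)|_\omega\leqslant C[\omega]_{\mathcal A_2}\int_{D(a_n,r)}|f|^2\omega\,dA$ for a fixed $r$ with $\epsilon<r\leqslant\frac14$. The mechanism is: apply the sub-mean-value inequality of Lemma \ref{subharmonic behavior of harmonic functions} with $p=1$ (so the averaging is over $|f|$ to the first power) to bound $|f(a_n)|$ by the unweighted average of $|f|$ over $D(a_n,r)$; split the weight as $\omega^{1/2}\omega^{-1/2}$ and apply Cauchy--Schwarz to introduce $\omega$, producing a factor $|D(a_n,r)|_{\omega^{-1}}$; then square, multiply by $|D(a_n,\epsilon)|_\omega$, and use both $|D(a_n,r)|_\omega|D(a_n,r)|_{\omega^{-1}}/|D(a_n,r)|^2\leqslant[\omega]_{\mathcal A_2}$ and $|D(a_n,\epsilon)|_\omega\leqslant|D(a_n,r)|_\omega$. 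Summing over $n$ and invoking the bounded overlap of $\{D(a_n,r)\}_n$ turns the right-hand side into $CN\|f\|_{L^2(\omega)}^2$. It is essential to use $p=1$: working with $|f|^2$ directly and splitting the weight would leave $|f|^2\omega^{-1}$, a quartic quantity one cannot close.

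The lower estimate is the delicate half and is where I expect the main difficulty. From the covering $\mathbb D=\bigcup_n D(a_n,\epsilon)$ I write $\|f\|_{L^2(\omega)}^2\leqslant\sum_n\int_{D(a_n,\epsilon)}|f|^2\omega$ and split $|f(z)|^2\leqslant 2|f(a_n)|^2+2|f(z)-f(a_n)|^2$, so the first piece reproduces $\sum_n|f(a_n)|^2|D(a_n,\epsilon)|_\omega$ and the second is an oscillation term to be absorbed. To make the oscillation small I would control $|f(z)-f(a_n)|$ for $z\in D(a_n,\epsilon)$ by the pseudohyperbolic distance (of size $\epsilon$) times the invariant gradient $(1-|w|^2)|\nabla f(w)|$, and bound the latter by a local $L^2$-average of $f$ using the derivative estimate of Lemma \ref{subharmonic behavior of harmonic functions} over a disk of \emph{fixed} radius $r_0$. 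This is exactly where the genuine factor $\epsilon^2$ appears; using an $\epsilon$-sized disk here would be fatal, since the derivative estimate degrades like $1/\epsilon$ and the gain would be lost. This yields $\int_{D(a_n,\epsilon)}|f-f(a_n)|^2\omega\leqslant C\epsilon^2\,|D(a_n,\epsilon)|_\omega\cdot\frac{1}{|D(a_n,2r_0)|}\int_{D(a_n,2r_0)}|f|^2\,dA$.

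The remaining obstacle is that this oscillation bound carries an \emph{unweighted} local integral, which I must convert back to the weighted norm. I would do this by inserting the pointwise upper estimate (at each interior point, with a small fixed radius) inside the integral and then using Fubini together with the doubling of the $\omega$-mass to replace $\frac{1}{|D(a_n,2r_0)|}\big(\int_{D(a_n,2r_0)}|f|^2\,dA\big)|D(a_n,\epsilon)|_\omega$ by $C\int_{D(a_n,3r_0)}|f|^2\omega\,dA$. Summing over $n$ with the bounded overlap of $\{D(a_n,3r_0)\}_n$ gives $\sum_n\int_{D(a_n,\epsilon)}|f-f(a_n)|^2\omega\leqslant C\epsilon^2\|f\|_{L^2(\omega)}^2$. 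Finally I fix $\epsilon$ so small that $2C\epsilon^2\leqslant\frac12$ and absorb the oscillation into the left-hand side, obtaining $\frac12\|f\|_{L^2(\omega)}^2\leqslant 2\sum_n|f(a_n)|^2|D(a_n,\epsilon)|_\omega$. The hardest part throughout is this repeated conversion between unweighted and weighted local integrals; the only tools available are the $\mathcal A_2$--Cauchy--Schwarz mechanism (always applied to first powers of $|f|$) and the doubling lemma, and one must arrange the gradient estimate at a fixed scale so that the oscillation genuinely contributes the absorbable factor $\epsilon^2$.
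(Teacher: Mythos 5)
Your overall architecture --- upper bound via the sub-mean-value inequality at power one plus the $\mathcal{A}_2$ Cauchy--Schwarz splitting, lower bound via a fixed-scale gradient/oscillation estimate carrying an $\epsilon^2$ gain followed by absorption --- is the same as the paper's (Proposition \ref{mu Carleson measure}, Lemma \ref{gradient estimation}, Lemma \ref{double intrgral estimation}, Proposition \ref{mu reverse Carleson measure}), and the upper half of your argument is sound. But there is a genuine quantitative gap in the summation step of your lower bound. After converting the oscillation bound into $C\int_{D(a_n,3r_0)}|f|^2\omega\,dA$ per lattice point, you sum over $n$ ``with the bounded overlap of $\{D(a_n,3r_0)\}_n$''. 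For an $\epsilon$-lattice the centers are only $\frac{\epsilon}{2}$-separated, so the multiplicity of the \emph{fixed-radius} dilates $D(a_n,3r_0)$ is not bounded independently of $\epsilon$: comparing M\"obius-invariant areas (the pairwise disjoint disks $D(a_n,\frac{\epsilon}{4})$ have invariant area $\sim\epsilon^2$, while $D(z,3r_0)$ has invariant area $\sim 1$), a single point can lie in $\sim\epsilon^{-2}$ of the dilates. Hence your oscillation total is $C\epsilon^2\cdot\epsilon^{-2}\|f\|_{L^2(\omega)}^2=C\|f\|_{L^2(\omega)}^2$ with no smallness left, and the absorption requirement $2C\epsilon^2\leqslant\frac{1}{2}$ cannot be met by shrinking $\epsilon$. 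Your own statement of fact (i) (``$N=N(R)$ independent of $\omega$ and $f$'') quietly elides the dependence of $N$ on $\epsilon$, and that is exactly where the argument breaks.

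The damage occurs at your conversion step, where $\frac{|D(a_n,\epsilon)|_\omega}{|D(a_n,2r_0)|}\int_{D(a_n,2r_0)}|f|^2\,dA$ is replaced by $C\int_{D(a_n,3r_0)}|f|^2\omega\,dA$: this discards the small ratio $|D(a_n,\epsilon)|_\omega/|D(a_n,3r_0)|_\omega$, which is precisely what must survive to beat the $\epsilon^{-2}$ multiplicity. The repair --- and this is how the paper's Lemma \ref{double intrgral estimation} is organized --- is to keep each oscillation term in the form $C\epsilon^2\,\frac{|D(a_n,\epsilon)|_\omega}{|D(a_n,3r_0)|_\omega}\int_{D(a_n,3r_0)}|f|^2\omega\,dA$, apply Fubini, and bound the resulting inner sum $\sum_{n:\,a_n\in D(\lambda,3r_0)}|D(a_n,\epsilon)|_\omega$ by $C\,|D(\lambda,4r_0)|_\omega$ using the pairwise disjointness of the disks $D(a_n,\frac{\epsilon}{4})$ --- equivalently, the Carleson property of the measure $\mu=\sum_n\delta_{a_n}|D(a_n,\frac{\epsilon}{4})|_\omega$ established in Proposition \ref{mu Carleson measure} --- together with the doubling Lemma \ref{two disks}. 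With that bookkeeping the sum of the small masses inside a fixed disk is controlled by the mass of the fixed disk rather than by (multiplicity)$\times$(fixed-disk mass), the factor $\epsilon^2$ survives intact, and your absorption then closes exactly as in the paper's Proposition \ref{mu reverse Carleson measure}.
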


Once Theorem \ref{L:key00} is established, we can quickly present a proof of Theorem \ref{Atomic Decomposition} as follows.

\begin{proof}[Proof of Theorem \ref{Atomic Decomposition}] Note that both $\omega$ and $\omega^{-1}$ are $\mathcal{A}_2$ weights, it follows from Theorem \ref{L:key00} that we can choose $\epsilon\in (0, \frac{1}{16})$ and an $\epsilon$-lattice $\{a_n\}_{n=1}^\infty$ such that
$$\|g\|_{L^2(\omega^{-1})}^2\approx  \sum_{n=1}^\infty |g(a_n)|^2 |D(a_n, \epsilon)|_{\omega^{-1}}.$$
By Cauchy-Schawrz inequality and the definition of $\mathcal{A}_2$ weight,
\begin{equation*}
|D(a_n, \epsilon)|^2 \leqslant |D(a_n, \epsilon)|_\omega \cdot |D(a_n, \epsilon)|_{\omega^{-1}}\leqslant [\omega]_{\mathcal{A}_2}|D(a_n, \epsilon)|^2.
\end{equation*}
Therefore,
$$\|g\|_{L^2(\omega^{-1})}^2\approx \sum_{n=1}^\infty |g(a_n)|^2 (1-|a_n|^2)^4 |D(a_n, \epsilon)|_\omega^{-1} .$$
This implies that the linear operator $\mathscr{L}: L_h^2(\omega^{-1})\rightarrow \ell^2(\mathbb Z)$ defined  by
$$\mathscr L (g):=\bigg\{g(a_n) (1-|a_n|^2)^2 |D(a_n, \epsilon)|_\omega^{-\frac{1}{2}}\bigg\}_{n=1}^\infty$$
is bounded below and so its range is closed. It follows from  the closed range theorem that $\mathscr L ^*$  is surjective.

From the proof of Lemma \ref{duality},  we have
$$g(a_n)=\langle g, R_{a_n}\rangle_{L^2(dA)}\ \ \ \  (*)$$
for each $g\in L_h^2(\omega^{-1})$ and every $n\geqslant 1$. Let $\{c_n\}_{n=1}^\infty \in \ell^2(\mathbb N)$. Using $(*)$ to  obtain that
$$\mathscr L ^*(\{c_n\})(z)=\sum_{n=1}^\infty c_n R_{a_n}(z) (1-|a_n|^2)^2 |D(a_n, \epsilon)|_\omega^{-\frac{1}{2}},$$
which gives the desired result. This completes the proof of Theorem \ref{Atomic Decomposition}.
\end{proof}

We  now turn to the proof of Theorem \ref{L:key00}. Let $0<\epsilon<\frac{1}{16}$ and $\{a_n\}_{n=1}^\infty\subset\mathbb{D}$ be an $\epsilon$-lattice. Define a measure $\mu=\mu_\epsilon$ on $\dd$ by
\begin{equation*}\label{E:mu}
\mu(z)=\sum_{n=1}^\infty\delta_{a_n}(z)\big|D(a_n,\frac{\epsilon}{4})\big|_\omega,
\end{equation*}
where $\delta_{a_n}$ is the Dirac measure concentrated at $a_n$.  Indeed, the conclusion of Theorem \ref{Atomic Decomposition} tells us that $\mu$ is a Carleson and reverse  Carleson  measure for $L_h^2(\omega)$. Firstly, we establish a sufficient condition for a general (positive) measure to be the $L_h^p(\omega)$ Carleson  measure, where $0<p<\infty$.
\begin{prop}\label{mu Carleson measure}
Suppose that $\nu$ is a positive Borel measure on $\mathbb D$. If there exist an $0<r\leqslant r_0$ and a constant $C>0$ independent of $z \in \mathbb D$ such that $$\nu (D(z, r)) \leqslant C |D(z, r)|_\omega$$ for all $z\in \mathbb D$, then  $\nu$ is a Carlseon measure for $L_h^p(\omega)$ ($0<p<\infty$), i.e.,
there is a  positive  constant $C_p$ such that
$$\int_{\mathbb D}|f(z)|^pd\nu(z)\leqslant C_p \|f\|_{L^p(\omega)}^p$$
for $f\in L_h^p(\omega)$. Consequently, $\mu$ is a  Carleson  measure for $L_h^2(\omega)$, i.e.,
there is an absolute constant $C>0$  such that
$$\int_{\mathbb D}|f(z)|^2d\mu(z)=\sum_{n=1}^\infty |f(a_n)|^2 \big|D(a_n, \frac{\epsilon}{4})\big|_\omega \leqslant C \|f\|_{L^2(\omega)}^2$$
for all $f$ in $L_h^2(\omega)$.
\end{prop}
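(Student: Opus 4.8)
The plan is to reduce the whole estimate to a single \emph{weighted sub-mean value inequality} for harmonic functions and then integrate it against $\nu$ via Tonelli's theorem. Concretely, I would first prove that for every $0<r\leqslant r_0$ there is a constant $C=C(p,[\omega]_{\mathcal{A}_2})$ with
\[
|f(z)|^p\,|D(z,r)|_\omega \leqslant C\int_{D(z,r)}|f(w)|^p\,\omega(w)\,dA(w)
\]
for all $f$ harmonic on $\dd$ and all $z\in\dd$. This is the heart of the argument, and I expect it to be the main obstacle: the naive route (apply Lemma \ref{subharmonic behavior of harmonic functions} and then insert the weight by Cauchy--Schwarz) changes the exponent from $p$ to $2p$. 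The fix is to apply the sub-mean value inequality of Lemma \ref{subharmonic behavior of harmonic functions} \emph{at the exponent $p/2$}, giving $|f(z)|^{p/2}\leqslant \frac{C}{|D(z,r)|}\int_{D(z,r)}|f|^{p/2}\,dA$; writing $|f|^{p/2}=\big(|f|^{p/2}\omega^{1/2}\big)\omega^{-1/2}$ and applying Cauchy--Schwarz then produces exactly $\big(\int_{D(z,r)}|f|^p\omega\big)^{1/2}$ together with the factor $|D(z,r)|_{\omega^{-1}}^{1/2}$. Squaring and multiplying through by $|D(z,r)|_\omega$ leaves the quotient $|D(z,r)|_\omega|D(z,r)|_{\omega^{-1}}/|D(z,r)|^2$, which is at most $[\omega]_{\mathcal{A}_2}$ because a pseudohyperbolic disk is a Euclidean disk and hence an admissible test set in the definition of $[\omega]_{\mathcal{A}_2}$. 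For $r\leqslant r_0\leqslant\tfrac14$ the constant $C(p/2)/(1-r)^2$ of Lemma \ref{subharmonic behavior of harmonic functions} is bounded uniformly in $r$, so the estimate is uniform.

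With this inequality rewritten as $|f(z)|^p\leqslant \frac{C}{|D(z,r)|_\omega}\int_{D(z,r)}|f|^p\omega\,dA$, I would integrate against $\nu$ and swap the order of integration:
\[
\int_\dd |f(z)|^p\,d\nu(z)\leqslant C\int_\dd\Big(\int_{D(z,r)}|f(w)|^p\omega(w)\,dA(w)\Big)\frac{d\nu(z)}{|D(z,r)|_\omega}
= C\int_\dd |f(w)|^p\omega(w)\Big(\int_{D(w,r)}\frac{d\nu(z)}{|D(z,r)|_\omega}\Big)dA(w),
\]
where I used the symmetry $w\in D(z,r)\iff z\in D(w,r)$ and nonnegativity of all integrands. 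It then remains only to bound the inner integral by a constant. For $z\in D(w,r)$, Lemma \ref{two disks} gives $|D(w,r)|_\omega\leqslant 8[\omega]_{\mathcal{A}_2}|D(z,r)|_\omega$, so $|D(z,r)|_\omega^{-1}\leqslant 8[\omega]_{\mathcal{A}_2}|D(w,r)|_\omega^{-1}$, and the inner integral is at most $8[\omega]_{\mathcal{A}_2}\,\nu(D(w,r))/|D(w,r)|_\omega$. The hypothesis $\nu(D(w,r))\leqslant C|D(w,r)|_\omega$ bounds this by $8C[\omega]_{\mathcal{A}_2}$, uniformly in $w$, which yields $\int_\dd|f|^p\,d\nu\leqslant C_p\|f\|_{L^p(\omega)}^p$ and proves the first assertion.

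Finally, to deduce that $\mu$ is a Carleson measure it suffices to verify the hypothesis for $\nu=\mu$ with $p=2$ and some admissible radius. I would choose $r\leqslant r_0$ small enough that $r<\epsilon/4$; then the separation property $\rho(a_n,a_m)\geqslant\epsilon/2$ of the $\epsilon$-lattice forces $D(z,r)$ to contain at most one lattice point $a_n$, whence $\mu(D(z,r))\leqslant |D(a_n,\tfrac{\epsilon}{4})|_\omega$. Since $a_n\in D(z,r)$, iterating the doubling inequality of Lemma \ref{two disks} a bounded number of times (to pass from radius $\epsilon/4$ down to radius $r$) together with a final application of Lemma \ref{two disks} gives $|D(a_n,\tfrac{\epsilon}{4})|_\omega\leqslant C|D(z,r)|_\omega$. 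Thus $\mu(D(z,r))\leqslant C|D(z,r)|_\omega$ for all $z$, and the first part applies with $\nu=\mu$. The only points requiring care are the uniformity of the sub-mean value constant for $r\leqslant r_0$ and the observation that pseudohyperbolic disks are legitimate test sets for $[\omega]_{\mathcal{A}_2}$; both are immediate once isolated.
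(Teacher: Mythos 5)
Your proposal is correct and follows essentially the same route as the paper's own proof: the identical weighted sub-mean value inequality (Lemma \ref{subharmonic behavior of harmonic functions} applied at exponent $p/2$, then Cauchy--Schwarz with $|f|^{p/2}=\big(|f|^{p/2}\omega^{1/2}\big)\omega^{-1/2}$ and the $\mathcal{A}_2$ condition on the pseudohyperbolic disk), followed by the same Tonelli swap using $w\in D(z,r)\iff z\in D(w,r)$ and the comparison of $|D(z,r)|_\omega$ with $|D(w,r)|_\omega$ from Lemma \ref{two disks}. The only cosmetic difference is in verifying the hypothesis for $\mu$: you shrink the test radius below $\frac{\epsilon}{4}$ so that the separation $\rho(a_n,a_m)\geqslant\frac{\epsilon}{2}$ puts at most one lattice point in each $D(z,r)$ and then iterate doubling, whereas the paper tests at radius $\frac{1}{4}$ and sums the pairwise disjoint disks $D(a_n,\frac{\epsilon}{4})$ inside $D(a,\frac{1}{2})$ --- both arguments rest on exactly the same two ingredients (lattice separation and the doubling estimate of Lemma \ref{two disks}), and yours has the mild advantage of producing a radius $r\leqslant r_0$ directly, matching the hypothesis of the first part without the implicit monotonicity-plus-doubling step the paper glosses over.
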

\begin{proof}
Fix an $r\leqslant r_0$. By Lemma \ref{subharmonic behavior of harmonic functions}, we obtain
$$|f(z)|^{\frac{p}{2}}\leqslant \frac{C}{|D(z, r)|} \int_{D(z, r)}|f(\xi)|^{\frac{p}{2}} dA(\xi) \ \  \  \big(z\in \mathbb D\big),$$
where $0<p<\infty $ and $C=C(p, r)$. Cauchy-Schwartz inequality and $\mathcal{A}_2$ condition give us that
\begin{align*}
|f(z)|^p & \leqslant \frac{C^2}{|D(z, r)|^2} \bigg(\int_{D(z, r)}|f(\xi)|^{\frac{p}{2}}dA(\xi)\bigg)^2\\
& \leqslant  \frac{C^2}{|D(z, r)|^2}  \bigg(\int_{D(z, r)}|f(\xi)|^p\omega(\xi)dA(\xi)\bigg)\cdot \bigg(\int_{D(z,r)}\frac{1}{\omega(\xi)}dA(\xi)\bigg)\\
&\leqslant C^2 [\omega]_{\mathcal{A}_2} \frac{\int_{D(z, r)}|f(\xi)|^p\omega(\xi)dA(\xi)}{|D(z, r)|_\omega}.
\end{align*}
Integrating the above over the unit disk to obtain
\begin{align*}
\int_{\mathbb D} |f(z)|^p d\nu(z)&\leqslant C \int_{\mathbb D} |D(z, r)|_\omega^{-1}\int_{D(z,r)}|f(\xi)|^p \omega(\xi)dA(\xi)d\nu(z)\\
&=C \int_{\mathbb D} |D(z, r)|_\omega^{-1}\int_{\mathbb D}|f(\xi)|^p \chi_{D(z, r)}(\xi)\omega(\xi)dA(\xi)d\nu(z)\\
&=C \int_{\mathbb D} \int_{\mathbb D} |D(z, r)|_\omega^{-1}|f(\xi)|^p \omega(\xi) \chi_{D(\xi, r)}(z) dA(\xi)d\nu(z),
\end{align*}
here the constant $C$ depends only on $p$ and $r$.  Note that $\xi \in D(z, r)$, we have by Lemma \ref{two disks} that
$$|D(z, r)|_\omega \geqslant C |D(\xi, r)|_\omega$$
for some absolute  constant $C$.  Therefore,
\begin{align*}
\int_{\mathbb D} |f(z)|^2d\nu(z)&\leqslant C \int_{\mathbb D} \int_{\mathbb D}|D(z, r)|_\omega ^{-1}|f(\xi)|^p \omega(\xi) \chi_{D(\xi, r)}(z) dA(\xi)d\nu(z)\\
&\leqslant C \int_{\mathbb D} \int_{\mathbb D} |D(\xi, r)|_\omega^{-1}|f(\xi)|^p \omega(\xi) \chi_{D(\xi, r)}(z)d\nu(z)dA(\xi).
\end{align*}
Now using our hypothesis on $\nu$ to get
\begin{align*}
\int_{\mathbb D} |f(z)|^p d\nu(z) &\leqslant C \int_{\mathbb D} |D(\xi, r)|_\omega^{-1}\nu(D(\xi, r)) |f(\xi)|^p \omega(\xi)   dA(\xi)\\
& \leqslant C_1 \int_{\mathbb D} |f(\xi)|^p \omega(\xi)dA(\xi),
\end{align*}
the constant $C_1 >0$ comes from the assumption  and  $C_1$ is independent of $f\in L_h^p(\omega)$.

For the second conclusion of this proposition,  it sufficient for us to show the following inequality:

$$\mu(D(a, \frac{1}{4})) \leqslant C \int_{D(a, \frac{1}{4})} \omega(z)dA(z)\ \ \ \ \  \big(a\in \mathbb D\big) $$
for some  absolute  constant $C>0$.  Indeed, by the definition of $\mu$, we have
$$\mu(D(a,\frac{1}{4}))=\sum_{\rho(a_n, a)<\frac{1}{4}} \int_{D(a_n, \frac{\epsilon}{4})}\omega dA=\sum_{\rho(a_n, a)<\frac{1}{4}}\big|D(a_n, \frac{\epsilon}{4})\big|_\omega.$$
If $\rho(a, a_n)<\frac{1}{4}$, then for each $z\in D(a_n, \frac{\epsilon}{4})$ we get
$$\rho(z, a)\leqslant \rho(z, a_n)+\rho(a_n, a)<\frac{\epsilon}{4}+\frac{1}{4}<\frac{1}{2}.$$
Which shows that
$$D(a_n, \frac{\epsilon}{4}) \subset D(a, \frac{1}{2})$$
for every $n \geqslant 1$ provided $\rho(a, a_n)<\frac{1}{4}$.

Since $D(a_n, \frac{\epsilon}{4}) \cap D(a_m, \frac{\epsilon}{4})=\varnothing$ for $n \neq m$, we obtain
$$\bigcup_{\rho(a_n, a)<\frac{1}{4}} D(a_n, \frac{\epsilon}{4})\subset D(a, \frac{1}{2})$$
and
$$\mu(D(a, \frac{1}{4})) \leqslant \big|D(a, \frac{1}{2})\big|_\omega \leqslant C \big|D(a, \frac{1}{4})\big|_\omega$$
for every $a\in \mathbb D$, the constant $C>0$ (independent of $a$) comes from Lemma \ref{two disks}. This completes the proof of  Proposition \ref{mu Carleson measure}.
\end{proof}

In order to finish the proof of Theorem \ref{L:key00},  we need to show that there is an $\epsilon \in (0, \frac{1}{16})$ such that $\mu=\mu_\epsilon$ is a reverse $L_h^2(\omega)$ Carleson measure. More precisely, we
will prove the following proposition.
\begin{prop}\label{mu reverse Carleson measure}
There exists an  $\epsilon \in (0, \frac{1}{16})$ such that
$$\int_{\mathbb D}|f(z)|^2 d\mu(z)=\sum_{n=1}^\infty |f(a_n)|^2 |D(a_n, \epsilon)|_\omega \geqslant  C \|f\|_{L^2(\omega)}^2$$
for all $f$ in $L_h^2(\omega)$, where $C>0$ is an absolute constant.
\end{prop}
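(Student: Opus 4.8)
The plan is to run an absorption argument: bound $\|f\|_{L^2(\omega)}^2$ from above by the target sum $\sum_n|f(a_n)|^2|D(a_n,\epsilon)|_\omega$ plus an oscillation error that, for $\epsilon$ small, is a small multiple of $\|f\|_{L^2(\omega)}^2$, and then absorb. Since $\{a_n\}$ is an $\epsilon$-lattice, the disks $D(a_n,\epsilon)$ cover $\dd$, so $\chi_\dd\leq\sum_n\chi_{D(a_n,\epsilon)}$ and hence $\|f\|_{L^2(\omega)}^2\leq\sum_n\int_{D(a_n,\epsilon)}|f|^2\omega\,dA$. On the $n$-th disk I would use $|f(z)|^2\leq 2|f(a_n)|^2+2|f(z)-f(a_n)|^2$ to split the integral into the main term $2|f(a_n)|^2|D(a_n,\epsilon)|_\omega$ and the error $E_n:=\int_{D(a_n,\epsilon)}|f(z)-f(a_n)|^2\omega\,dA$. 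Summing gives $\|f\|_{L^2(\omega)}^2\leq 2\sum_n|f(a_n)|^2|D(a_n,\epsilon)|_\omega+2\sum_n E_n$, so the proposition reduces to the estimate $\sum_n E_n\lesssim\epsilon^2\|f\|_{L^2(\omega)}^2$ with an implied constant independent of $\epsilon$; choosing $\epsilon$ small then makes $2\sum_n E_n\leq\tfrac12\|f\|_{L^2(\omega)}^2$, which absorbs into the left side.

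The heart of the matter is the per-disk oscillation estimate. Since $f$ is harmonic, write $f=h_1+\overline{h_2}$ with $h_1,h_2$ analytic; as both $f$ and $\bar f$ are harmonic, the derivative inequality in Lemma \ref{subharmonic behavior of harmonic functions}, applied to $f$ and to $\bar f$ with a \emph{fixed} radius $s_0$ (chosen small enough that all disks below have pseudohyperbolic radius at most $r_0$), controls $|\partial f|+|\bar\partial f|$ pointwise by a fixed-size unweighted local $L^2$ average of $f$. Integrating the gradient along the Euclidean segment from $a_n$ to $z$, which lies in the convex Euclidean disk $D(a_n,\epsilon)$, and using $|z-a_n|\lesssim(1-|a_n|^2)\epsilon$, I obtain, uniformly in $z\in D(a_n,\epsilon)$,
\[
|f(z)-f(a_n)|^2\lesssim\frac{\epsilon^2}{(1-|a_n|^2)^2}\int_{D(a_n,2s_0)}|f|^2\,dA,
\]
so $E_n\lesssim\epsilon^2(1-|a_n|^2)^{-2}|D(a_n,\epsilon)|_\omega\int_{D(a_n,2s_0)}|f|^2\,dA$. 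To return to the weighted norm I would convert this unweighted local integral into a weighted one: the pointwise inequality $|f(z)|^2\lesssim[\omega]_{\mathcal A_2}|D(z,s_0)|_\omega^{-1}\int_{D(z,s_0)}|f|^2\omega\,dA$ established in the proof of Proposition \ref{mu Carleson measure}, together with the doubling property of Lemma \ref{two disks}, gives $\int_{D(a_n,2s_0)}|f|^2\,dA\lesssim[\omega]_{\mathcal A_2}(1-|a_n|^2)^2|D(a_n,4s_0)|_\omega^{-1}\int_{D(a_n,4s_0)}|f|^2\omega\,dA$. Combining the two yields
\[
E_n\lesssim\epsilon^2[\omega]_{\mathcal A_2}\frac{|D(a_n,\epsilon)|_\omega}{|D(a_n,4s_0)|_\omega}\int_{D(a_n,4s_0)}|f|^2\omega\,dA.
\]

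To sum, I would swap the order of summation and integration and invoke a weighted bounded-overlap (Schur-type) estimate: for each fixed $z$,
\[
\sum_n\frac{|D(a_n,\epsilon)|_\omega}{|D(a_n,4s_0)|_\omega}\chi_{D(a_n,4s_0)}(z)\leq C,
\]
an absolute constant. This is where the lattice structure enters decisively: only $n$ with $a_n\in D(z,4s_0)$ contribute, for such $n$ one has $|D(a_n,4s_0)|_\omega\gtrsim|D(z,4s_0)|_\omega$ by doubling, and the disks $D(a_n,\epsilon)$ have bounded overlap (the $a_n$ being $\epsilon/2$-separated) and lie in $D(z,5s_0)$, so $\sum_n|D(a_n,\epsilon)|_\omega\lesssim|D(z,5s_0)|_\omega\lesssim|D(z,4s_0)|_\omega$; the two fixed-size weighted measures cancel. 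With this, $\sum_n E_n\lesssim\epsilon^2[\omega]_{\mathcal A_2}\,\|f\|_{L^2(\omega)}^2$, completing the absorption.

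I expect the main obstacle to be making the error genuinely small in $\epsilon$, because of a real tension in the choice of scale: a gradient bound on a disk of radius comparable to $\epsilon$ destroys the $\epsilon^2$ gain (the constant blows up), while a gradient bound on a fixed-size disk keeps the $\epsilon^2$ but lets the relevant disks overlap about $\epsilon^{-2}$ times, so that a crude overlap count would cancel the gain. The resolution, and the delicate point, is to retain the weighted ratio $|D(a_n,\epsilon)|_\omega/|D(a_n,4s_0)|_\omega$ inside the sum and control the overlap through the weighted Schur estimate above, so the $\epsilon^2$ survives. A secondary difficulty, special to the harmonic setting where no Cauchy formula is available, is that \emph{both} $\partial f$ and $\bar\partial f$ must be estimated; this is handled by applying the derivative inequality of Lemma \ref{subharmonic behavior of harmonic functions} to $f$ and $\bar f$ separately.
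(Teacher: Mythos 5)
Your proposal is correct and is essentially the paper's own argument: the paper likewise proves a fixed-scale oscillation bound $|f(w)-f(\xi)|^2\lesssim \epsilon^2\,|D(\xi,\tfrac14)|_\omega^{-1}\int_{D(\xi,1/4)}|f|^2\omega\,dA$ for $\xi\in D(w,\epsilon)$ (Lemma \ref{gradient estimation}, via the derivative inequality of Lemma \ref{subharmonic behavior of harmonic functions} and Cauchy--Schwarz with the $\mathcal A_2$ condition), controls the summed overlap exactly as in your weighted Schur step by combining the Carleson property $\mu(D(\lambda,\tfrac14))\lesssim|D(\lambda,\tfrac14)|_\omega$ from Proposition \ref{mu Carleson measure} with the doubling Lemma \ref{two disks} (Lemma \ref{double intrgral estimation}), and then absorbs for small $\epsilon$. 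The only difference is presentational: you discretize via the covering and the splitting $|f(z)|^2\leqslant 2|f(a_n)|^2+2|f(z)-f(a_n)|^2$, while the paper runs the same absorption through the triangle inequality $I-J\leqslant C\epsilon\|f\|_{L^2(\omega)}$ for the double integral against $\omega\,dA\times\mu$ with kernel $\chi_\epsilon(z,\xi)/|D(\xi,\epsilon)|_\omega$; and you pass through an unweighted local average before converting by $\mathcal A_2$, which the paper's Lemma \ref{gradient estimation} does internally as well.
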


The rest of this section is devoted to the proof of the above \emph{reverse Carleson inequality}.  To do so, we
need to prove the following two lemmas related to harmonic functions, which extend
the results in \cite{Luecking1985} and \cite{Luecking AJM} for weighted Bergman spaces to the present situation.
\begin{lemma}\label{gradient estimation}
Let $f$ be a harmonic function on $\mathbb D$  and $\epsilon \in (0, \frac{1}{16})$.   Then there exists a constant $C_1>0$ (independent of $z, \epsilon$ and $f$) such that
$$|f(z)-f(0)|\leqslant C_1 \epsilon \int_{D(0, \frac{1}{4})} |f(\lambda)| dA(\lambda)$$
when $|z|<\epsilon.$ As a consequence, there exists a constant $C_2>0$ (independent of $z, \epsilon$ and $f$) such that
$$|f(w)-f(\xi)|^2 \leqslant  \frac{C_2 \epsilon^2}{|D(\xi, \frac{1}{4})|_\omega}\int_{D(\xi, \frac{1}{4})} |f(\lambda)|^2 \omega(\lambda) dA(\lambda)$$
when $\xi\in D(w, \epsilon)$.
\end{lemma}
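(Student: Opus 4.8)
The plan is to prove the first (unweighted, origin-centered) estimate directly, and then transport it to an arbitrary point by a M\"obius change of variables, inserting the weight at the very end via Cauchy--Schwarz and the $\mathcal A_2$ condition. For the first inequality I would use that $f-f(0)$ vanishes at the origin and estimate it by its gradient along the segment joining $0$ to $z$: writing $f(z)-f(0)=\int_0^1\nabla f(tz)\cdot z\,dt$ gives $|f(z)-f(0)|\le\epsilon\sup_{|\zeta|\le\epsilon}|\nabla f(\zeta)|$ whenever $|z|<\epsilon$. It then remains to bound $|\nabla f(\zeta)|$ for $|\zeta|<\epsilon<\frac1{16}$ by an integral of $|f|$ over $D(0,\frac14)$.

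To control the gradient I would apply the gradient estimate of Lemma \ref{subharmonic behavior of harmonic functions} (with $p=1$ and $r=\frac18$) to $\partial f$, and also to $\partial\bar f$ in order to bound $\bar\partial f$, using that $|\bar f|=|f|$ and that $\bar f$ is again harmonic; together these dominate the full gradient. Since $\rho(0,\zeta)=|\zeta|<\frac1{16}$, the triangle inequality for $\rho$ gives $D(\zeta,\frac18)\subset D(0,\tfrac1{16}+\tfrac18)\subset D(0,\frac14)$, and for such $\zeta$ the quantity $|D(\zeta,\frac18)|$ is bounded below by an absolute constant, so the factor $|D(\zeta,\frac18)|^{-3/2}$ is harmless. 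This yields $|\nabla f(\zeta)|\le C\int_{D(0,1/4)}|f|\,dA$ uniformly in $\zeta$, hence the claimed bound with $C_1$ independent of $z,\epsilon,f$.

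For the second inequality I would reduce to the first one via the involutive automorphism $\varphi_\xi(z)=\frac{\xi-z}{1-\bar\xi z}$, which swaps $0$ and $\xi$ and, by M\"obius invariance of $\rho$, maps $D(0,\frac14)$ onto $D(\xi,\frac14)$. The composition $g=f\circ\varphi_\xi$ is harmonic with $g(0)=f(\xi)$ and $g(\varphi_\xi(w))=f(w)$; since $|\varphi_\xi(w)|=\rho(\xi,w)<\epsilon$, the first estimate applied to $g$ gives $|f(w)-f(\xi)|\le C_1\epsilon\int_{D(0,1/4)}|g|\,dA$. Changing variables $\lambda\mapsto\varphi_\xi(\lambda)$, with Jacobian $|\varphi_\xi'(u)|^2=(1-|\xi|^2)^2/|1-\bar\xi u|^4$, and using the standard comparisons $|1-\bar\xi u|\approx1-|\xi|^2$ and $|D(\xi,\frac14)|\approx(1-|\xi|^2)^2$ valid on $D(\xi,\frac14)$, I would arrive at $|f(w)-f(\xi)|\le C\epsilon\,|D(\xi,\frac14)|^{-1}\int_{D(\xi,1/4)}|f|\,dA$.

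Finally I would square this and introduce the weight exactly as in the proof of Proposition \ref{mu Carleson measure}: Cauchy--Schwarz gives $\big(\int_{D(\xi,1/4)}|f|\,dA\big)^2\le|D(\xi,\frac14)|_{\omega^{-1}}\int_{D(\xi,1/4)}|f|^2\omega\,dA$, and the $\mathcal A_2$ inequality on the (Euclidean) disk $D(\xi,\frac14)$ converts $|D(\xi,\frac14)|_{\omega^{-1}}/|D(\xi,\frac14)|^2$ into $[\omega]_{\mathcal A_2}/|D(\xi,\frac14)|_\omega$, producing the desired $C_2\epsilon^2$ bound. The step I expect to require the most care is the bookkeeping in the change of variables, specifically verifying the two-sided comparisons $|1-\bar\xi u|\approx1-|\xi|^2$ and $|D(\xi,\frac14)|\approx(1-|\xi|^2)^2$ uniformly in $\xi\in\mathbb D$, since these are precisely what force the constants to be independent of $\xi$; the remainder is a routine assembly of Lemma \ref{subharmonic behavior of harmonic functions}, the invariance of $\rho$, and the $\mathcal A_2$ condition already used in Lemma \ref{two disks}.
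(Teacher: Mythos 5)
Your proposal is correct and follows essentially the same route as the paper's proof: the integral representation $f(z)-f(0)=\int_0^1\nabla f(tz)\cdot z\,dt$ combined with the gradient bound from Lemma \ref{subharmonic behavior of harmonic functions} (applied to both $\partial f$ and $\partial\bar f$) and the containment $D(\zeta,\cdot)\subset D(0,\frac14)$ for the first estimate, then transport by the M\"obius involution $\varphi_\xi$, a change of variables, and Cauchy--Schwarz with the $\mathcal A_2$ condition for the second. The only differences are cosmetic (you use $r=\frac18$ where the paper uses $r=\frac1{16}$, and you spell out the Jacobian comparisons that the paper compresses into ``changing of variable gives''), and the uniform equivalences $|1-\bar\xi u|\approx 1-|\xi|^2$ and $|D(\xi,\frac14)|\approx(1-|\xi|^2)^2$ you flag are indeed standard and hold uniformly in $\xi$.
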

\begin{proof}
 Observe that
$$f(z)-f(0)=\int_{0}^1 \frac{\partial}{\partial t}\big[f(tz)\big]dt=\int_0^1 \big[\nabla f(tz) \cdot z \big] dt$$
for $|z|<\epsilon<\frac{1}{16}$.  Thus we have
$$|f(z)-f(0)|\leqslant \sup_{|\xi|\leqslant \epsilon} |\nabla f(\xi)|\cdot |z|.$$

Recall that
\begin{align*}
|\nabla f|^2&=2\big(|\partial f|^2+|\overline{\partial} f|^2\big)=2\big(|\partial f|^2+|\partial \overline{f}|^2\big)\\
&\leqslant \big[\sqrt{2} (|\partial f|+|\partial \overline{f}|)\big]^2,
\end{align*}
where $\overline{\partial} f= \frac{\partial f}{\partial \overline{z}}.$
By Lemma \ref{subharmonic behavior of harmonic functions}, there is an absolute constant $C>0$ such that
\begin{align*}
|\nabla f(\xi)| & \leqslant \frac{2\sqrt{2}C}{(1-\frac{1}{16})^3}\cdot \frac{1}{|D(\xi, \frac{1}{16})|^{\frac{3}{2}}} \int_{D(\xi, \frac{1}{16})} |f(\lambda)| dA(\lambda)\\
&=\frac{2\sqrt{2}C}{\big(1-\frac{1}{16}\big)^3}\cdot \bigg(\frac{1-(\frac{1}{16})^2|\xi|^2}{\frac{1}{16}(1-|\xi|^2)}\bigg)^3\int_{D(\xi, \frac{1}{16})} |f(\lambda)| dA(\lambda).
\end{align*}
Note that $|\xi| \leqslant \epsilon<\frac{1}{16}$ and if $\lambda \in D(\xi, \frac{1}{16})$, then
$$|\lambda|<|\xi|+\frac{1}{16}|1-\overline{\lambda}\xi|<\frac{1}{4},$$
so we have $D(\xi, \frac{1}{16}) \subset D(0, \frac{1}{4})$ and
$$|\nabla f(\xi)| \leqslant C_1 \int_{D(0, \frac{1}{4})} |f(\lambda)| dA(\lambda)$$
for all $|\xi|\leqslant \epsilon$,
where the constant $C_1$  is independent of $z, \xi$ and $\epsilon$.
Therefore,
$$|f(z)-f(0)| \leqslant \sup_{|\xi|\leqslant \epsilon} |\nabla f(\xi)| \cdot |z|\leqslant C_1 |z| \int_{D(0, \frac{1}{4})} |f(\lambda)| dA(\lambda)$$
for  $|z|<\epsilon$ with $\epsilon \in (0, \frac{1}{16})$, this proves the first part of the lemma.

Let $\varphi_\xi$ be the M\"{o}bius map, then $f\circ \varphi_\xi$ is harmonic on $\mathbb D$. Changing of variable gives that
\begin{align*}
\big|f(\varphi_\xi(z))-f(\xi)\big| \leqslant \frac{ C_3 \epsilon}{|D(\xi, \frac{1}{4})|}\int_{D(\xi, \frac{1}{4})} |f(\lambda)| dA(\lambda)
\end{align*}
for some absolute constant $C_3>0$.  By Cauchy-Schwartz inequality  we obtain
\begin{align*}
\big|f(\varphi_\xi(z))-f(\xi)|^2 &\leqslant \frac{ C_3^2 \epsilon^2}{|D(\xi, \frac{1}{4})|^2}
\bigg(\int_{D(\xi, \frac{1}{4})} |f(\lambda)|^2\omega(\lambda) dA(\lambda)\bigg)\cdot \bigg(\int_{D(\xi, \frac{1}{4})} \frac{1}{\omega(\lambda)}dA(\lambda)\bigg)\\
&\leqslant \frac{C_3^2 [\omega]_{\mathcal{A}_2} \epsilon^2}{\big|D(\xi, \frac{1}{4})\big|_\omega}\int_{D(\xi, \frac{1}{4})} |f(\lambda)|^2 \omega(\lambda) dA(\lambda).
\end{align*}
 Let $w=\varphi_\xi(z)$, then $|\varphi_\xi(w)|=|z|<\epsilon$, which gives that
$$|f(w)-f(\xi)|^2 \leqslant \frac{C_2\epsilon^2}{\big|D(\xi, \frac{1}{4})\big|_\omega}\int_{D(\xi, \frac{1}{4})} |f(\lambda)|^2 \omega(\lambda) dA(\lambda)$$
if $\xi \in D(w, \epsilon)$, as desired.
\end{proof}

\begin{lemma}\label{double intrgral estimation}
Let $f$ be a harmonic function and $\epsilon \in (0, \frac{1}{16})$.  Let $\mu$ be the measure defined  in Section 2. Then there exists a constant $C>0$ (independent of $\epsilon$) such that
$$\int_{\mathbb D} \int_{\mathbb D} \chi_{\epsilon} (z, \xi) |D(\xi, \epsilon)|_\omega^{-1} |f(z)-f(\xi)|^2 \omega(z) dA(z)d\mu(\xi)
\leqslant C\epsilon^2 \|f\|_{L^2(\omega)}^2$$
for all $f\in L_h^2(\omega)$, where
$$\chi_{\epsilon}(z, \xi)=
\begin{cases}
1,& \mathrm{if}\  z \in D(\xi, \epsilon),\\
0,&  \mathrm{otherwise}.
\end{cases}$$
\end{lemma}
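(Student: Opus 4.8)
The plan is to integrate out the measure $\mu$ first. Since $d\mu(\xi)=\sum_n\delta_{a_n}(\xi)|D(a_n,\frac{\epsilon}{4})|_\omega$, the double integral collapses to the single sum
\[
\sum_{n=1}^\infty \frac{|D(a_n,\frac{\epsilon}{4})|_\omega}{|D(a_n,\epsilon)|_\omega}\int_{D(a_n,\epsilon)}|f(z)-f(a_n)|^2\,\omega(z)\,dA(z),
\]
and it remains to bound this by $C\epsilon^2\|f\|_{L^2(\omega)}^2$. Because $\frac{\epsilon}{4}<\epsilon$ the prefactor $|D(a_n,\frac{\epsilon}{4})|_\omega/|D(a_n,\epsilon)|_\omega$ is at most $1$ and may simply be discarded, so it suffices to estimate $\sum_n\int_{D(a_n,\epsilon)}|f(z)-f(a_n)|^2\,\omega\,dA$.

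The crucial step is the way Lemma \ref{gradient estimation} is applied to $|f(z)-f(a_n)|^2$. The condition $z\in D(a_n,\epsilon)$ is symmetric in $z$ and $a_n$, and I would invoke the lemma with the auxiliary disk centred at the running variable $z$ rather than at the lattice point $a_n$; that is, I take $w=a_n$ and $\xi=z$, obtaining
\[
|f(z)-f(a_n)|^2\leqslant \frac{C_2\epsilon^2}{|D(z,\tfrac14)|_\omega}\int_{D(z,\frac14)}|f(\lambda)|^2\,\omega(\lambda)\,dA(\lambda).
\]
The point of this choice is that the right-hand side no longer depends on $a_n$ except through the domain $D(a_n,\epsilon)$ in which $z$ ranges. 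Summing over $n$ and using Tonelli, I can then pull the sum inside as $\sum_n\chi_{D(a_n,\epsilon)}(z)$, and here the lattice is used \emph{at its own scale} $\epsilon$: the disks $D(a_n,\frac{\epsilon}{4})$ are pairwise disjoint (by the separation of the lattice), and each $a_n$ with $z\in D(a_n,\epsilon)$ lies in $D(z,2\epsilon)$; since all these disks have normalized area comparable to $(1-|z|^2)^2\epsilon^2$, a standard area-packing count gives $\sum_n\chi_{D(a_n,\epsilon)}(z)\leqslant N$ with $N$ an absolute constant independent of $\epsilon$.

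After this the estimate reads
\[
\sum_n\int_{D(a_n,\epsilon)}|f(z)-f(a_n)|^2\,\omega\,dA\leqslant C_2N\epsilon^2\int_{\mathbb D}\frac{\omega(z)}{|D(z,\frac14)|_\omega}\Big(\int_{D(z,\frac14)}|f|^2\omega\,dA\Big)\,dA(z).
\]
I would finish with Fubini, using that the kernel $\chi_{D(z,1/4)}(\lambda)=\chi_{D(\lambda,1/4)}(z)$ is symmetric, to rewrite the right-hand side as $C_2N\epsilon^2\int_{\mathbb D}|f(\lambda)|^2\omega(\lambda)\big(\int_{D(\lambda,1/4)}\frac{\omega(z)}{|D(z,1/4)|_\omega}\,dA(z)\big)\,dA(\lambda)$. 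For $z\in D(\lambda,\frac14)$ the doubling Lemma \ref{two disks} (with $r=\frac14$) gives $|D(\lambda,\frac14)|_\omega<8[\omega]_{\mathcal A_2}|D(z,\frac14)|_\omega$, hence the inner integral is at most $8[\omega]_{\mathcal A_2}$. This yields the desired bound $C\epsilon^2\|f\|_{L^2(\omega)}^2$ with $C$ independent of $\epsilon$.

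The main obstacle — and the reason a naive approach fails — is exactly the choice made in the second paragraph. If one instead centres the auxiliary disk at $a_n$ (taking $\xi=a_n$), one is forced to sum the characteristic functions of the \emph{fixed-radius} disks $D(a_n,\frac14)$, whose overlap grows like $\epsilon^{-2}$; this cancels the gained factor $\epsilon^2$ and produces only the Carleson bound $C\|f\|_{L^2(\omega)}^2$ instead of the decaying bound $C\epsilon^2\|f\|_{L^2(\omega)}^2$. Keeping the $\epsilon$-disks $D(a_n,\epsilon)$ as the objects whose overlap is counted is precisely what preserves the $\epsilon^2$ gain, and this improvement is what will make $\mu_\epsilon$ a reverse Carleson measure for small $\epsilon$ in Proposition \ref{mu reverse Carleson measure}.
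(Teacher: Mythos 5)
Your proof is correct, and at the key step it takes a genuinely different route from the paper's. Both arguments extract the factor $\epsilon^2$ from Lemma \ref{gradient estimation}; the difference is where the auxiliary $\frac14$-disk is centered and how the resulting overlap is tamed. You center it at the running point $z$ (taking $w=a_n$, $\xi=z$), which makes the integrand independent of $a_n$ and reduces the problem to the finite-overlap bound $\sum_n\chi_{D(a_n,\epsilon)}(z)\leqslant N$, obtained by packing the disjoint disks $D(a_n,\frac{\epsilon}{4})$ into $D(z,2\epsilon)$; you then conclude with Fubini and the doubling estimate of Lemma \ref{two disks}, and this closes correctly with a constant depending only on $[\omega]_{\mathcal A_2}$ and absolute quantities. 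The paper instead centers the disk at $\xi$ (i.e.\ at the lattice point), integrates out $z$ using $\int_{\mathbb D}\chi_\epsilon(z,\xi)\,\omega(z)\,dA(z)=|D(\xi,\epsilon)|_\omega$, and after Fubini bounds the kernel $\int_{\mathbb D}\chi_{D(\lambda,\frac14)}(\xi)\,|D(\xi,\frac14)|_\omega^{-1}\,d\mu(\xi)$ by $C\,\mu(D(\lambda,\frac14))\,|D(\lambda,\frac14)|_\omega^{-1}$, which is uniformly bounded in $\epsilon$ by the Carleson property of $\mu$ established in Proposition \ref{mu Carleson measure}. Your version is slightly more self-contained (it does not reuse that proposition), at the price of an explicit packing count; the paper's version recycles the already-proved Carleson bound and needs no counting.

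However, the diagnosis in your final paragraph is mistaken: centering the auxiliary disk at $a_n$ does \emph{not} destroy the $\epsilon^2$ gain --- the paper's proof is exactly that route, and it succeeds. What your heuristic overlooks is that the indicators $\chi_{D(a_n,\frac14)}$ are never summed with unit coefficients: after the $z$-integration cancels $|D(a_n,\epsilon)|_\omega$ against the normalizer, each term carries the $\mu$-weight $|D(a_n,\frac{\epsilon}{4})|_\omega$, and by disjointness of the disks $D(a_n,\frac{\epsilon}{4})$ one has $\sum_{a_n\in D(\lambda,\frac14)}|D(a_n,\frac{\epsilon}{4})|_\omega\leqslant |D(\lambda,\frac12)|_\omega\leqslant C\,|D(\lambda,\frac14)|_\omega$ by Lemma \ref{two disks}. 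The $\epsilon^{-2}$ overlap multiplicity is thus exactly compensated by the $\epsilon^2$-sized weights, and both centerings yield $C\epsilon^2\|f\|_{L^2(\omega)}^2$. In the end the two proofs rest on the same separation property of the lattice, deployed at scale $\epsilon$ in your packing count and hidden inside the Carleson property of $\mu$ in the paper's.
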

\begin{proof} By Lemma \ref{gradient estimation}, we have
$$\chi_{\epsilon}(z, \xi)|f(z)-f(\xi)|^2 \frac{\omega(z)}{|D(\xi, \epsilon)|_\omega}
\leqslant \bigg(\frac{C \epsilon^2} {\big|{D(\xi, \frac{1}{4})}\big|_\omega } \int_{D(\xi, \frac{1}{4})} |f(\lambda)|^2 \omega(\lambda)dA(\lambda) \bigg)
\frac{\chi_{\epsilon}(z, \xi)\omega(z)}{|D(\xi, \epsilon)|_\omega}.$$
Integrating over $z\in \mathbb D$ on both sides gives
$$\int_{\mathbb D}\chi_{\epsilon}(z, \xi)|f(z)-f(\xi)|^2 \frac{\omega(z)}{|D(\xi, \epsilon)|_\omega}dA(z)\leqslant \frac{C \epsilon^2} {\big|D(\xi, \frac{1}{4})\big|_\omega} \int_{D(\xi, \frac{1}{4})} |f(\lambda)|^2 \omega(\lambda)dA(\lambda).$$
Now integrate with respect to $d\mu(\xi)$ to obtain
$$\int_{\mathbb D}\int_{\mathbb D}|f(z)-f(\xi)|^2 \frac{\chi_{\epsilon}(z, \xi) \omega(z)}{|D(\xi, \epsilon)|_\omega}dA(z)d\mu(\xi)\leqslant \int_{\mathbb D}\frac{C \epsilon^2} {\big|D(\xi, \frac{1}{4})\big|_\omega} \int_{D(\xi, \frac{1}{4})} |f(\lambda)|^2 \omega(\lambda)dA(\lambda)d\mu(\xi).$$
Using Fubini's Theorem on the right side and noting $\chi_{D(\xi, \frac{1}{4})}(\lambda)=\chi_{D(\lambda, \frac{1}{4})}(\xi)$, we have
\begin{align*}
\int_{\mathbb D}\frac{C \epsilon^2} {|D(\xi, \frac{1}{4})|_\omega} \int_{D(\xi, \frac{1}{4})} |f(\lambda)|^2 \omega(\lambda)dA(\lambda)d\mu(\xi)
=C\epsilon^2 \int_{\mathbb D} \bigg(\int_{\mathbb D} \frac{\chi_{D(\lambda, \frac{1}{4})}(\xi)}{|D(\xi, \frac{1}{4})|_\omega}d\mu(\xi)\bigg)
|f(\lambda)|^2\omega(\lambda)dA(\lambda)
\end{align*}

Since $\lambda \in D(\xi, \frac{1}{4})$, Lemma \ref{two disks} tells us  that there is an absolute constant $C>0$ such that
$$\big|D(\xi, \frac{1}{4})\big|_\omega \geqslant C  \big|D(\lambda, \frac{1}{4})\big|_\omega.  $$
Thus we obatin
$$\int_{\mathbb D} \frac{\chi_{D(\lambda, \frac{1}{4})}(\xi)}{|D(\xi, \frac{1}{4})|_\omega}d\mu(\xi) \leqslant C^{-1} \frac{\mu(D(\lambda, \frac{1}{4}))}
{|D(\lambda, \frac{1}{4})|_\omega}.$$
By Lemma \ref{mu Carleson measure}, we have
$$\mu(D(\lambda, \frac{1}{4})) \cdot \big|D(\lambda, \frac{1}{4})\big|_\omega^{-1}\leqslant C_1$$
for some constant $C_1>0$ (independent of $\epsilon$), completing the proof.
\end{proof}

Now we are ready to prove the reverse Carleson inequality in Proposition \ref{mu reverse Carleson measure}.
\begin{proof}[Proof of Proposition \ref{mu reverse Carleson measure}] Recall that $\mu$ satisfies the condition: $$\mu(D(a, \frac{1}{4})) \leqslant C \big|D(a, \frac{1}{4})\big|_\omega $$ for all $a\in \mathbb D$. Applying Lemma  \ref{double intrgral estimation} to  $\epsilon \in (0, \frac{1}{16})$ we have
$$\bigg[\int_{\mathbb D} \int_{\mathbb D}  \frac{\chi_{\epsilon} (z, \xi)} {|D(\xi, \epsilon)|_\omega} |f(z)-f(\xi)|^2 \omega(z) dA(z)d\mu(\xi)
\bigg]^{\frac{1}{2}}\leqslant C \epsilon \|f\|_{L^2(\omega)}.$$
The triangle inequality  gives
$$I-J \leqslant C \epsilon \|f\|_{L^2(\omega)},$$
where
 $$I:=\bigg[\int_{\mathbb D} \int_{\mathbb D}  \frac{\chi_{\epsilon} (z, \xi)} {|D(\xi, \epsilon)|_\omega} |f(z)|^2 \omega(z) dA(z)d\mu(\xi)
\bigg]^{\frac{1}{2}}$$
and
$$J:=\bigg[\int_{\mathbb D} \int_{\mathbb D}  \frac{\chi_{\epsilon} (z, \xi)} {|D(\xi, \epsilon)|_\omega} |f(\xi)|^2 \omega(z) dA(z)d\mu(\xi)\bigg]^{\frac{1}{2}}.$$

For the first integral $I$, we note that for each $z\in \mathbb D$,
\begin{align*}
\int_{\mathbb D} \frac{\chi_{\epsilon} (z, \xi)} {|D(\xi, \epsilon)|_\omega } d\mu(\xi)&=\int_{D(z, \epsilon)} \frac{d\mu(\xi)} {|D(\xi, \epsilon)|_\omega } \geqslant C_1 \frac{\mu(D(z, \epsilon))}{|D(z, \epsilon)|_\omega },
\end{align*}
where the $``\geqslant"$  follows from Lemma \ref{two disks} and $C_1$ is an absolute constant.

Since $\mathbb D=\bigcup_{n=1}^\infty D(a_n, \epsilon)$,  we can select a disk $D(a_j, \epsilon)$ such that $z\in D(a_j, \epsilon)$ for each $z\in \mathbb D$. Applying  Lemma \ref{two disks} to get
\begin{align*}
\mu(D(z, \epsilon))&=\sum_{n=1}^\infty \delta_{a_n}(D(z, \epsilon))\int_{D(a_n, \frac{\epsilon}{4})}\omega dA\\
&\geqslant \big|D(a_j, \frac{\epsilon}{4})\big|_\omega \geqslant C_2 |D(a_j, \epsilon)|_\omega
\geqslant C_3 |D(z, \epsilon)|_\omega,
\end{align*}
 where $C_2 $  and $C_3$ are  absolute positive constants. Therefore, we have
$$\int_{\mathbb D} \frac{\chi_{\epsilon} (z, \xi)} {|D(\xi, \epsilon)|_\omega} d\mu(\xi)\geqslant \widetilde{C}$$
for some absolute constant $\widetilde{C}>0$. These give us  that $$I\geqslant \widetilde{C} \|f\|_{L^2(\omega)}.$$

For the second integral, we observe that
$$\int_{\mathbb D}\chi_{\epsilon}(z, \xi) \omega(z)dA(z)=|D(\xi, \epsilon)|_\omega,$$
to get  $$J=\bigg(\int_{\mathbb D} |f(\xi)|^2 d\mu(\xi)\bigg)^{\frac{1}{2}}.$$
Thus we have
$$\widetilde{C} \|f\|_{L^2(\omega)}-\bigg(\int_{\mathbb D} |f(\xi)|^2d\mu(\xi)\bigg)^{\frac{1}{2}} \leqslant I-J \leqslant C \epsilon \|f\|_{L^2(\omega)}.$$
Equivalently,
$$(\widetilde{C}- C\epsilon)\|f\|_{L^2(\omega)}\leqslant \bigg( \int_{\mathbb D} |f(\xi)|^2d\mu(\xi)\bigg)^{\frac{1}{2}}$$
for each $0<\epsilon<\frac{1}{16}$.

Since $C, \widetilde{C}$  are both independent of $\epsilon$, we can choose $$0<\epsilon< \min \bigg\{\frac{1}{16}, \frac{\widetilde{C}}{2C}, r_0\bigg\}$$ such that
\begin{align*}
\|f\|_{L^2(\omega)}^2 \leqslant \frac{1}{(\widetilde{C}- C\epsilon)^2} \int_{\mathbb D} |f(\xi)|^2d\mu(\xi).
\end{align*}
Recall the definition of $\mu$, we conclude that
\begin{align*}
\|f\|_{L^2(\omega)}^2 & \leqslant \frac{4}{\widetilde{C}^2} \sum_{n=1}^\infty |f(a_n)|^2 |D(a_n, \frac{\epsilon}{4})|_\omega\\
& \leqslant \frac{4}{\widetilde{C}^2} \sum_{n=1}^\infty |f(a_n)|^2 |D(a_n, \epsilon)|_\omega
\end{align*}
This completes the proof.
\end{proof}

The proof of  Theorem  \ref{Atomic Decomposition} implies the following result immediately.
\begin{thm}\label{Atomic Decomposition cor}
Suppose that $\omega$ satisfies the $\mathcal{A}_2$ condition. Then there is an $\epsilon$-lattice $\{a_n\}_{n=1}^{\infty} \subset \mathbb D$ such that any $f\in L_h^2(\omega)$ has the following form:
$$f(z)=\sum_{n=1}^\infty c_n K^{\omega}_{a_n}(z) |D(a_n, \epsilon)|_\omega^{\frac{1}{2}}$$
for some sequence $\{c_n\}$ in $\ell^2(\mathbb N)$, where $K^{\omega}_{a_n}$ is the reproducing kernel for $L_h^2(\omega)$.
\end{thm}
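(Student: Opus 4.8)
The plan is to run the argument from the proof of Theorem~\ref{Atomic Decomposition} verbatim, except that I would apply the sampling equivalence of Theorem~\ref{L:key00} directly to the weight $\omega$ itself instead of to $\omega^{-1}$, and pair against $L_h^2(\omega)$ through its own $L^2(\omega)$ inner product instead of against $L_h^2(\omega^{-1})$ through the unweighted pairing. This single change is what replaces the unweighted kernel $R_{a_n}$ by the weighted reproducing kernel $K^\omega_{a_n}$ in the final formula. Concretely, Theorem~\ref{L:key00} supplies an $\epsilon \in (0, \tfrac{1}{16})$ and an $\epsilon$-lattice $\{a_n\}_{n=1}^\infty$ with
$$C_1 \|f\|_{L^2(\omega)}^2 \leqslant \sum_{n=1}^\infty |f(a_n)|^2 |D(a_n, \epsilon)|_\omega \leqslant C_2 \|f\|_{L^2(\omega)}^2$$
for all $f \in L_h^2(\omega)$.

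Next I would introduce the sampling operator $\mathscr{L} : L_h^2(\omega) \to \ell^2(\NN)$ defined by
$$\mathscr{L}(f) = \big\{\, f(a_n)\, |D(a_n, \epsilon)|_\omega^{1/2}\,\big\}_{n=1}^\infty .$$
The two inequalities above state precisely that $\mathscr{L}$ is bounded and bounded below; in particular $\mathscr{L}$ is injective with closed range. By the closed range theorem, $\mathrm{range}(\mathscr{L}^*) = (\ker \mathscr{L})^\perp = L_h^2(\omega)$, so the adjoint $\mathscr{L}^* : \ell^2(\NN) \to L_h^2(\omega)$ is surjective. This is exactly the step where the choice of duality matters: because I identify $L_h^2(\omega)$ with its own dual via $\langle \cdot, \cdot\rangle_{L^2(\omega)}$, the adjoint lands back in $L_h^2(\omega)$ and the weighted kernel will appear, whereas in Theorem~\ref{Atomic Decomposition} the $\omega^{-1}$-duality through the unweighted pairing produced $R_{a_n}$.

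It then remains only to compute $\mathscr{L}^*$ explicitly. For $\{c_n\} \in \ell^2(\NN)$ and $f \in L_h^2(\omega)$, the reproducing property $f(a_n) = \langle f, K^\omega_{a_n}\rangle_{L^2(\omega)}$ gives
$$\langle \{c_n\}, \mathscr{L}f\rangle_{\ell^2} = \sum_{n=1}^\infty c_n\, \overline{f(a_n)}\, |D(a_n, \epsilon)|_\omega^{1/2} = \Big\langle \sum_{n=1}^\infty c_n\, |D(a_n, \epsilon)|_\omega^{1/2}\, K^\omega_{a_n},\; f \Big\rangle_{L^2(\omega)} ,$$
so that $\mathscr{L}^*(\{c_n\}) = \sum_{n=1}^\infty c_n\, |D(a_n, \epsilon)|_\omega^{1/2}\, K^\omega_{a_n}$. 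Surjectivity of $\mathscr{L}^*$ then produces, for every $f \in L_h^2(\omega)$, a sequence $\{c_n\} \in \ell^2(\NN)$ with $f = \sum_n c_n\, |D(a_n, \epsilon)|_\omega^{1/2}\, K^\omega_{a_n}$, which is the claimed decomposition.

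I do not expect any genuine obstacle here, which is why the result is stated as an immediate consequence of the \emph{proof} of Theorem~\ref{Atomic Decomposition} rather than of its statement: every analytic ingredient (the sampling equivalence of Theorem~\ref{L:key00}, the closed range theorem, and the reproducing identity for $L_h^2(\omega)$) is already available, and no new estimate on harmonic functions is required. The only point demanding attention is the bookkeeping of the adjoint computation and the correct identification of the dual pairing, so that $K^\omega_{a_n}$ rather than $R_{a_n}$ emerges.
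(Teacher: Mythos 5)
Your proposal is correct and follows essentially the same route as the paper: the authors likewise define the sampling map $\mathscr S f=\{f(a_n)|D(a_n,\epsilon)|_\omega^{1/2}\}$ on $L_h^2(\omega)$, invoke Propositions \ref{mu Carleson measure} and \ref{mu reverse Carleson measure} (i.e.\ Theorem \ref{L:key00} for $\omega$ itself) to get boundedness and boundedness below, conclude surjectivity of $\mathscr S^*$, and identify $\mathscr S^*(\{c_n\})=\sum_n c_n K^\omega_{a_n}|D(a_n,\epsilon)|_\omega^{1/2}$ via the reproducing property in the $L^2(\omega)$ pairing. Your observation that pairing $L_h^2(\omega)$ against itself (rather than against $L_h^2(\omega^{-1})$ through the unweighted pairing) is exactly what replaces $R_{a_n}$ by $K^\omega_{a_n}$ is precisely the point of the paper's proof.
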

\begin{proof}  We consider the linear map $\mathscr S: L_h^2(\omega)\rightarrow \ell^2(\mathbb N)$:
$$\mathscr S f=\bigg\{f(a_n) |D(a_n, \epsilon)|_\omega^{\frac{1}{2}}\bigg\}_{n=1}^\infty.$$
 Since $L_h^2(\omega)$ is a Hilbert space, we apply  Propositions \ref{mu Carleson measure} and \ref{mu reverse Carleson measure}
 to deduce that
 $\mathscr S^*: \ell^2(\mathbb N)\rightarrow L_h^2(\omega)$ is surjective and
 $$\big\langle \mathscr S^* (\{c_n\}), \  f \big\rangle_{L^2(\omega)}=\bigg\langle \sum_{n=1}^\infty c_n K^{\omega}_{a_n}(z) |D(a_n, \epsilon)|_\omega^{\frac{1}{2}}, \ f \bigg\rangle_{L^2(\omega)}$$
 for $\{c_n\}\in \ell^2(\mathbb N)$ and $f\in L^2_h(\omega)$. Therefore,
 $$\mathscr S^*(\{c_n\})(z)=\sum_{n=1}^\infty c_n K^{\omega}_{a_n}(z) |D(a_n, \epsilon)|_\omega^{\frac{1}{2}}.$$
 This completes the proof of this theorem.
 \end{proof}

\section{Boundedness and Compactness of $T_\nu$ on $L^2_h(\omega)$}

 Recall Toeplitz operator $T_\nu$ initially defined on a dense subspace of $L^2_h(\omega)$ is given by
 $$T_\nu f(z)=\int_{\mathbb D}f(\lambda)\overline{K^{\omega}_z(\lambda)}d\nu(\lambda) \ \ \  \big(z\in \mathbb D\big).$$
In this section, we will characterize the boundedness and compactness of $T_\nu$ on $L_h^2(\omega)$ via Berezin transform and Carleson measure for $L_h^2(\omega)$. Firstly, we define the Berezin transform $\widetilde{\nu}$ of $\nu$ as follows:
$$\widetilde{\nu}(z)=\frac{1}{\|R_z\|_{L^2(\omega)}^2}\int_{\mathbb D}|R_z(\lambda)|^2d\nu(\lambda), $$
where $$R_z(\lambda)=\frac{1}{(1-\overline{\lambda}z)^2}+\frac{1}{(1-\overline{z}\lambda)^2}-1$$ is the reproducing kernel for  $L_h^2$. The first main result of this section is Theorem \ref{Boundedness1}.

\begin{thm}\label{Boundedness1}
Let $\nu$ be a  positive finite Borel measure  on $\mathbb D$ and $\omega$  be an  $\mathcal{A}_2$ weight. The following  conditions are equivalent:\\
(1) $T_\nu$ extends to a bounded linear operator on $L_h^2(\omega)$;\\
(2) $\nu$ is a Carlseon measure for $L_h^2(\omega)$;\\
(3) There exist an $0<r\leqslant r_0$ and a  constant $C>0$ independent of $z \in \mathbb D$ such that $$\nu(D(z, r)) \leqslant C |D(z, r)|_\omega$$ for all $z\in \mathbb D$;\\
(4) Berezin transform $\widetilde{\nu}$  is bounded.
\end{thm}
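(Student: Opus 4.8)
The plan is to prove the four conditions equivalent by establishing the chain $(3)\Rightarrow(2)\Rightarrow(4)\Rightarrow(3)$ together with the separate equivalence $(1)\Leftrightarrow(2)$. The equivalence of (1) and (2) rests on the sesquilinear identity
$$\langle T_\nu f, g\rangle_{L^2(\omega)}=\int_{\mathbb D}f(\lambda)\overline{g(\lambda)}\,d\nu(\lambda),$$
valid for $f,g$ in a dense subspace of $L_h^2(\omega)$. I would prove it by inserting the definition of $T_\nu$, applying Fubini, and using the Hermitian symmetry $K^\omega_z(\lambda)=\overline{K^\omega_\lambda(z)}$ together with the reproducing property to collapse the inner integral to $\overline{g(\lambda)}$. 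Taking $g=f$ shows that $T_\nu$ is a positive symmetric operator with $\langle T_\nu f,f\rangle_{L^2(\omega)}=\int_{\mathbb D}|f|^2\,d\nu$; hence its associated form is bounded precisely when $\int_{\mathbb D}|f|^2\,d\nu\leqslant C\|f\|_{L^2(\omega)}^2$, which is exactly the Carleson condition (2), and boundedness of the form is equivalent to boundedness of the operator.

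For $(3)\Rightarrow(2)$ there is nothing new to do: this is precisely Proposition \ref{mu Carleson measure}. For $(2)\Rightarrow(4)$ I would simply test the Carleson inequality on the harmonic function $f=R_z$, which lies in $L_h^2(\omega)$ because it is bounded on $\overline{\mathbb D}$ for fixed $z$ and $\omega\in L^1$; this gives $\int_{\mathbb D}|R_z|^2\,d\nu\leqslant C\|R_z\|_{L^2(\omega)}^2$, and dividing by $\|R_z\|_{L^2(\omega)}^2$ yields $\widetilde{\nu}(z)\leqslant C$ uniformly in $z$.

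The substantive implication is $(4)\Rightarrow(3)$. Fixing $r\leqslant r_0$ and using the lower estimate $|R_z(\lambda)|\geqslant\tfrac12(1-|z|)^{-2}$ for $\lambda\in D(z,r)$ from the Remark following Theorem \ref{Atomic Decomposition}, I would bound
$$\int_{\mathbb D}|R_z(\lambda)|^2\,d\nu(\lambda)\geqslant \frac{1}{4(1-|z|)^{4}}\,\nu(D(z,r)),$$
so that $\widetilde{\nu}(z)\leqslant C$ forces $\nu(D(z,r))\leqslant 4C(1-|z|)^{4}\|R_z\|_{L^2(\omega)}^2$. To reach (3) it then remains to prove the kernel-norm estimate $\|R_z\|_{L^2(\omega)}^2\leqslant C(1-|z|^2)^{-4}|D(z,r)|_\omega$, which I expect to be the main obstacle.

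My plan for that estimate uses the pointwise bound $|R_z(\lambda)|\leqslant 6|1-\overline{z}\lambda|^{-2}$ and the M\"obius change of variables $\lambda=\varphi_z(w)$, under which the factor $|1-\overline{z}\lambda|^{-4}$ cancels against the Jacobian and leaves
$$\|R_z\|_{L^2(\omega)}^2\leqslant 36\int_{\mathbb D}\frac{\omega(\lambda)}{|1-\overline{z}\lambda|^{4}}\,dA(\lambda)=\frac{36}{(1-|z|^2)^2}\int_{\mathbb D}\omega(\varphi_z(w))\,dA(w).$$
Since $|D(z,r)|_\omega\approx(1-|z|^2)^2\int_{|w|<r}\omega(\varphi_z(w))\,dA(w)$, the remaining task is to compare the full integral of $\Omega_z:=\omega\circ\varphi_z$ with its integral over the fixed disk $\{|w|<r\}$. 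Here I would verify that $\Omega_z$ is an $\mathcal{A}_2$ weight with $[\Omega_z]_{\mathcal{A}_2}$ bounded uniformly in $z$ (because $\varphi_z$ carries pseudohyperbolic disks to pseudohyperbolic disks and its Jacobian is essentially constant on each such disk, so the $\mathcal{A}_2$ quotient is preserved), and then apply the $\mathcal{A}_2$ inequality on the whole disk against the Cauchy--Schwarz bound on $\{|w|<r\}$ to get $\int_{\mathbb D}\Omega_z\,dA\leqslant C(r,[\omega]_{\mathcal{A}_2})\int_{|w|<r}\Omega_z\,dA$. Combining these yields the kernel bound, hence (3). The delicate points are the uniform control of $[\Omega_z]_{\mathcal{A}_2}$ and the transfer of the $\mathcal{A}_2$ condition from Euclidean balls to pseudohyperbolic disks, which is exactly where the hypothesis $\omega\in\mathcal{A}_2$ is used most essentially.
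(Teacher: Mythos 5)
Your cycle is organized differently from the paper's ($(4)\Rightarrow(3)\Rightarrow(2)\Rightarrow(1)\Rightarrow(4)$ there), and several of your links are fine and even simpler: $(3)\Rightarrow(2)$ is indeed just Proposition \ref{mu Carleson measure}; your $(2)\Rightarrow(4)$, testing the Carleson inequality on $f=R_z$ (a bounded harmonic function for fixed $z$), is more direct than the paper's detour through boundedness of $T_\nu$; and your form identity $\langle T_\nu f,g\rangle_{L^2(\omega)}=\int_{\mathbb D} f\overline{g}\,d\nu$ is exactly what the paper establishes inside its $(1)\Rightarrow(4)$ step, except that the paper reaches it through partial sums coming from Theorem \ref{Atomic Decomposition cor} rather than through Fubini. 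Note that your Fubini step needs an absolute-integrability bound such as $\int_{\mathbb D}\|K^\omega_\lambda\|_{L^2(\omega)}\,d\nu(\lambda)<\infty$, which finiteness of $\nu$ alone does not supply (by Lemma \ref{Schatten class lemma}, $\|K^\omega_\lambda\|^2_{L^2(\omega)}\approx |D(\lambda,r)|_\omega^{-1}$ can blow up arbitrarily fast); so, like the paper, you should first work with finite sections or compactly supported data and pass to the limit. Your pointwise bound $|R_z(\lambda)|\leqslant 6|1-\overline{z}\lambda|^{-2}$ is correct and neatly absorbs the constant term $-1$, which the paper instead handles by the separate estimate $\|\omega\|_{L^1}\lesssim (1-|z|)^{-4}|D(z,r)|_\omega$ in Lemma \ref{norm of kernel}.

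The genuine gap is in your proof of the kernel upper bound $\|R_z\|^2_{L^2(\omega)}\leqslant C(1-|z|)^{-4}|D(z,r)|_\omega$, specifically in the claim $\sup_z[\omega\circ\varphi_z]_{\mathcal A_2}<\infty$. The justification offered (M\"obius maps permute pseudohyperbolic disks, on which the Jacobian is essentially constant) controls the $\mathcal A_2$ quotient of $\Omega_z=\omega\circ\varphi_z$ only over pseudohyperbolic disks of radius bounded away from $1$, whereas the paper's $\mathcal A_2$ class runs over all Euclidean balls, including unit-scale balls and boundary lenses whose $\varphi_z$-images are badly distorted. Decisively, your final step applies the $\mathcal A_2$ inequality for $\Omega_z$ at the single ball $B=\mathbb D$, i.e., you need $\big(\int_{\mathbb D}\Omega_z\,dA\big)\big(\int_{\mathbb D}\Omega_z^{-1}\,dA\big)\leqslant C$ uniformly in $z$; but at unit scale the Jacobian $|\varphi_z'(w)|^2=(1-|z|^2)^2|1-\overline{z}w|^{-4}$ varies by a factor $\approx(1-|z|)^{-4}$ across $\mathbb D$, which is exactly where ``essentially constant'' fails. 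Undoing the change of variables, what you need is the invariant-type condition
$$\bigg(\int_{\mathbb D}\frac{(1-|z|^2)^2\,\omega(\lambda)}{|1-\overline{z}\lambda|^4}\,dA(\lambda)\bigg)\bigg(\int_{\mathbb D}\frac{(1-|z|^2)^2\,\omega(\lambda)^{-1}}{|1-\overline{z}\lambda|^4}\,dA(\lambda)\bigg)\leqslant C,$$
and the naive argument (dyadic annuli $|1-\overline{z}\lambda|\approx 2^k(1-|z|)$, with the $\mathcal A_2$ quotient applied at each scale) only yields a bound growing like $\log\frac{1}{1-|z|}$: the off-diagonal terms in the resulting double sum do not decay. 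Closing this requires a genuinely new ingredient, e.g.\ reverse doubling of $\omega\,dA$ on the homogeneous space, or the self-improvement $\mathcal A_2\Rightarrow\mathcal A_{2-\epsilon}$. The paper sidesteps all of this: since $\mathcal A_2$ implies the B\'ekoll\'e--Bonami condition, Lemma \ref{norm of kernel} simply imports the analytic-kernel estimate $\|K_z\|^2_{L^2(\omega)}\approx(1-|z|)^{-4}|S(z,r)|_\omega$ from Lemma 2.1 of \cite{Con} and reduces $R_z=2\mathrm{Re}(K_z)-1$ to it. Citing that lemma, or supplying the reverse-doubling argument, would repair your proof; as sketched, the step would fail.
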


To prove Theorem \ref{Boundedness1}, we need the following useful lemma.
\begin{lemma}\label{norm of kernel}
Let $\omega \in \mathcal{A}_2$. If  $0<r\leqslant r_0$, then there is a constant $C=C(r)$ such that
$$\frac{|D(\lambda, r)|_\omega}{2(1-|\lambda|)^4} \leqslant \|R_\lambda\|_{L^2(\omega)}^2 \leqslant C \frac{|D(\lambda, r)|_\omega}{(1-|\lambda|)^4}$$
for all $\lambda \in \mathbb D$.
\end{lemma}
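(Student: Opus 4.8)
The plan is to reduce the statement to estimating the integral $\|R_\lambda\|_{L^2(\omega)}^2=\int_{\mathbb D}|R_\lambda(z)|^2\omega(z)\,dA(z)$ from above and below, and to treat the two bounds by entirely different mechanisms. The lower bound is the routine half: since $0<r\leqslant r_0$, the remark following Theorem \ref{Atomic Decomposition} gives $|R_\lambda(z)|\geqslant\frac{1/2}{(1-|\lambda|)^2}$ for every $z\in D(\lambda,r)$, so discarding the integral over $\mathbb D\setminus D(\lambda,r)$ and integrating this pointwise bound over $D(\lambda,r)$ yields $\|R_\lambda\|_{L^2(\omega)}^2\geqslant\frac{c}{(1-|\lambda|)^4}\int_{D(\lambda,r)}\omega\,dA$ for an absolute $c$, which is the left-hand inequality.

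For the upper bound I would start from the elementary global estimate $|R_\lambda(z)|\leqslant\frac{2}{|1-\overline{\lambda}z|^2}+1$, valid on all of $\mathbb D$ because $R_\lambda(z)=2\,\mathrm{Re}\,(1-\overline{\lambda}z)^{-2}-1$. Squaring, it suffices to dominate $\int_{\mathbb D}|1-\overline{\lambda}z|^{-4}\omega\,dA$ and $\int_{\mathbb D}\omega\,dA$ separately by $C(1-|\lambda|)^{-4}|D(\lambda,r)|_\omega$. The constant term is disposed of first, and it is cleaner than it looks: applying the $\mathcal A_2$ inequality to the ball $\mathbb D$ itself gives $\|\omega\|_1\|\omega^{-1}\|_1\leqslant[\omega]_{\mathcal A_2}$, while Cauchy--Schwarz on $D(\lambda,r)$ together with $|D(\lambda,r)|\approx(1-|\lambda|)^2$ gives $|D(\lambda,r)|_\omega\gtrsim(1-|\lambda|)^4/\|\omega^{-1}\|_1$; combining the two shows $\int_{\mathbb D}\omega\,dA=\|\omega\|_1$ is dominated by $(1-|\lambda|)^{-4}|D(\lambda,r)|_\omega$ uniformly in $\lambda$.

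The main term is where the real work lies. I would decompose $\mathbb D$ into the dyadic regions $R_k=\{z\in\mathbb D:2^{k-1}(1-|\lambda|)\leqslant|1-\overline{\lambda}z|<2^k(1-|\lambda|)\}$ for $k=0,1,\dots,N$ with $N\approx\log_2\frac{1}{1-|\lambda|}$, on each of which $|1-\overline{\lambda}z|\approx 2^k(1-|\lambda|)$, so that $\int_{R_k}|1-\overline{\lambda}z|^{-4}\omega\,dA\approx 2^{-4k}(1-|\lambda|)^{-4}\omega(R_k)$. Writing $Q_k$ for the Carleson box (equivalently, the comparable Euclidean ball centred on the radius through $\lambda$) of scale $2^k(1-|\lambda|)$ containing $R_k$, one has $|Q_k|\approx 4^k(1-|\lambda|)^2$ and $\omega(R_k)\leqslant\omega(Q_k)$, so the problem becomes to sum $\sum_k 2^{-4k}\omega(Q_k)$ and compare it with $\omega(Q_0)\approx|D(\lambda,r)|_\omega$.

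The hard part is precisely this summation, and it is here that mere doubling fails: iterating Lemma \ref{two disks} (or its Euclidean analogue) only gives $\omega(Q_k)\leqslant C_d^{\,k}\omega(Q_0)$ with a doubling constant $C_d\geqslant 16$, so $\sum_k 2^{-4k}C_d^{\,k}$ does not converge. The remedy is to use the full strength of the $\mathcal A_2$ condition, not just the doubling it implies. From $\omega(Q_k)\leqslant[\omega]_{\mathcal A_2}|Q_k|^2/\omega^{-1}(Q_k)$ and $\omega^{-1}(Q_0)\geqslant|Q_0|^2/\omega(Q_0)$ one gets $\frac{\omega(Q_k)}{\omega(Q_0)}\leqslant[\omega]_{\mathcal A_2}\frac{|Q_k|^2}{|Q_0|^2}\frac{\omega^{-1}(Q_0)}{\omega^{-1}(Q_k)}$; since $\omega^{-1}$ is again an $\mathcal A_2$ weight it satisfies reverse doubling, $\omega^{-1}(Q_0)/\omega^{-1}(Q_k)\lesssim(|Q_0|/|Q_k|)^{\delta}=4^{-k\delta}$ for some $\delta>0$ depending only on $[\omega]_{\mathcal A_2}$, and therefore $\omega(Q_k)\lesssim 2^{(4-2\delta)k}\omega(Q_0)$. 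This beats the factor $2^{-4k}$ and leaves the convergent geometric series $\sum_k 2^{-2\delta k}$, so that $\sum_k 2^{-4k}\omega(Q_k)\lesssim\omega(Q_0)\approx|D(\lambda,r)|_\omega$, which gives the upper bound. I expect the two genuinely delicate points to be (i) justifying the reverse-doubling decay with a constant depending only on $[\omega]_{\mathcal A_2}$, and (ii) the routine but fiddly geometry of identifying each $R_k$ with a Euclidean ball to which the $\mathcal A_2$ inequality applies and checking that $|1-\overline{\lambda}z|$ is indeed of constant order there.
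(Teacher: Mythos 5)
Your lower bound and your disposal of the additive constant are exactly the paper's: the paper likewise integrates the pointwise estimate $|R_\lambda(z)|\geqslant \tfrac12 (1-|\lambda|)^{-2}$ over $D(\lambda,r)$, and likewise reduces to $\|\omega\|_{L^1}\lesssim (1-|\lambda|)^{-4}|D(\lambda,r)|_\omega$ via Cauchy--Schwarz and $|D(\lambda,r)|\gtrsim (1-|\lambda|)^2$. Where you genuinely diverge is the main term of the upper bound: the paper performs no decomposition at all, but instead quotes Lemma 2.1 of \cite{Con}, which gives $\|K_\lambda\|_{L^2(\omega)}^2\approx |S(\lambda,r)|_\omega (1-|\lambda|)^{-4}$ for the analytic kernel $K_\lambda(z)=(1-\overline{\lambda}z)^{-2}$, and then uses $R_\lambda=2\,\mathrm{Re}\,K_\lambda-1$ and $S(\lambda,r)\subset D(\lambda,r)$ to conclude. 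Your dyadic-annulus argument is in substance a self-contained re-proof of the upper half of that cited lemma, transplanted to the harmonic kernel, and its central observation is correct and is the real point: plain doubling gives $\omega(Q_k)\lesssim C_d^{\,k}\,\omega(Q_0)$ with a per-step constant (of order $[\omega]_{\mathcal{A}_2}$) that may exceed $16$, so the sum against $2^{-4k}$ can diverge, whereas the $\mathcal{A}_2$ inequality on $Q_k$ combined with the $\mathcal{A}_\infty$ smallness property of $\omega^{-1}$ yields $\omega(Q_k)\lesssim 2^{(4-2\delta)k}\omega(Q_0)$ and a convergent geometric series. What your route buys is independence from \cite{Con} and an explicit display of why the full $\mathcal{A}_2$ hypothesis, not mere doubling, is needed; what it costs is that the reverse-doubling input $\omega^{-1}(Q_0)/\omega^{-1}(Q_k)\lesssim (|Q_0|/|Q_k|)^{\delta}$ is itself a theorem of comparable depth (it comes from the reverse H\"older inequality for $\mathcal{A}_\infty$ weights, and since the ambient space is the homogeneous space $(\mathbb{D},d,dA)$ with truncated balls, you need its space-of-homogeneous-type version), so the net amount of imported machinery is roughly what the paper imports via Constantin. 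Two small repairs to your write-up: as defined, $R_0$ is empty, since $|1-\overline{\lambda}z|\geqslant 1-|\lambda||z|\geqslant 1-|\lambda|$ on $\mathbb{D}$, so the annuli should be indexed as $2^{k}(1-|\lambda|)\leqslant |1-\overline{\lambda}z|<2^{k+1}(1-|\lambda|)$ with $0\leqslant k\lesssim \log_2\tfrac{2}{1-|\lambda|}$; and the identification of $\{z:|1-\overline{\lambda}z|<t\}$ with a ball of radius $\approx t$ (it is a Euclidean disk of radius $t/|\lambda|$ centred at $1/\overline{\lambda}$) degenerates for small $|\lambda|$, so treat $|\lambda|\leqslant\tfrac12$ separately, where $|R_\lambda|$ is bounded and your constant-term estimate already gives the claim.
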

\begin{proof} Let $\lambda\in\dd$. By the remarks below Theorem \ref{Atomic Decomposition},  there exists an $r_0 \in (0, \frac{1}{4}]$  such that if $0<r\leqslant r_0$, then
$$\frac{\frac{1}{2}}{(1-|\lambda|)^2} \leqslant |R_\lambda(z)| \leqslant \frac{3}{(1-|\lambda|)^2}$$
for all $z \in D(\lambda, r)$.
It follows that for each $r \in (0, r_0]$ we have
\begin{align*}
\|R_\lambda\|_{L^2(\omega)}^2 &=\int_{\mathbb D} |R_\lambda(z)|^2 \omega(z)dA(z)\\
&\geqslant \int_{D(\lambda, r)} |R_\lambda(z)|^2 \omega(z)dA(z)\\
&\geqslant \frac{1}{4} \int_{D(\lambda, r)} \frac{\omega(z)}{(1-|\lambda|)^4} dA(z)\\
&=\frac{|D(\lambda, r)|_\omega}{4(1-|\lambda|)^4}.
\end{align*}
To show the other inequality, observe that
$$|z-\lambda|<r(1-|\lambda|)<r|1-\overline{z}\lambda| \  \   \ \  \big(z, \lambda \in \mathbb D \big),$$ we have $$S(\lambda, r):=\{z\in \mathbb D: |z-\lambda|<r(1-|\lambda|)\}\subset D(\lambda, r).$$
Thus, we have  by Lemma 2.1 in \cite{Con} that
$$ C_1\frac{|S(\lambda, r)|_\omega}{(1-|\lambda|)^4} \leqslant \|K_\lambda\|_{L^2(\omega)}^2 \leqslant C_2 \frac{|S(\lambda, r)|_\omega}{(1-|\lambda|)^4}\leqslant  C_2 \frac{|D(\lambda, r)|_\omega}{(1-|\lambda|)^4}$$
for some positive constants $C_1=C_1(r)$ and $C_2=C_2(r)$, here $K_\lambda(z)=\frac{1}{(1-\overline{\lambda}z)^2}$ is the reproducing kernel for $L_a^2$ at $\lambda$.
 Recall that
$$R_\lambda(z)=2\mathrm{Re} (K_\lambda(z))-1,
$$
we have
\begin{align*}
\|R_\lambda\|_{L^2(\omega)}^2 \leqslant \frac{4C|D(\lambda, r)|\omega}{(1-|\lambda|)^4} + 2\|\omega\|_{L^1}.
\end{align*}
Consequently, to complete the proof we need only to show that there is a constant $C_3$ depending only on  $r$ such that
$$\|\omega\|_{L^1} \leqslant C_3 \frac{ |D(\lambda, r)|_\omega}{(1-|\lambda|)^4} $$
for every $\lambda \in \mathbb D.$ Indeed, we may assume that $\|\omega\|_{L^1}=1$. Then it is easy to see that
$$C (1-|\lambda|)^2\leqslant  |D(\lambda, r)|$$
for some constant $C=C(r)$.
Thus we have
\begin{align*}
C (1-|\lambda|)^2 &\leqslant |D(\lambda, r)|=\int_{D(\lambda, r)} \omega(z)^{\frac{1}{2}} \omega(z)^{-\frac{1}{2}}dA(z)\\
&\leqslant |D(\lambda, r)|_\omega^{\frac{1}{2}}\cdot |D(\lambda, r)|_{\omega^{-1}}^{\frac{1}{2}}\\
& \leqslant |D(\lambda, r)|_\omega^{\frac{1}{2}}\cdot \|\omega^{-1}\|_{L^1}^{\frac{1}{2}}.
\end{align*}
This shows that
$$\frac{|D(\lambda, r)|_\omega}{(1-|\lambda|)^4}\geqslant C_3,$$
to complete the proof of Lemma \ref{norm of kernel}.
\end{proof}

Now we are ready to prove Theorem \ref{Boundedness1}.
\begin{proof}[Proof of Theorem \ref{Boundedness1}] Our strategy is $(4)\Rightarrow(3)\Rightarrow(2)\Rightarrow(1)\Rightarrow(4)$.

$(4)\Rightarrow (3)$: Note that there exists an $r \in (0, r_0]$  such that
$$\frac{\frac{1}{2}}{(1-|z|)^2} \leqslant |R_z(\lambda)| \leqslant \frac{3}{(1-|z|)^2}$$
for $\lambda\in D(z, r)$.  From the definition of $\widetilde{\nu}$, there is a constant $C>0$ such that
 $$\frac{1}{\|R_z\|_{L^2(\omega)}^2}\int_{D(z,r)}|R_z(\lambda)|^2d\nu(\lambda)\leqslant \widetilde{\nu}(z)\leqslant C$$
 for all $z\in \mathbb D.$  Combining these with  Lemma \ref{norm of kernel} gives us that
 $$\nu(D(z, r))\leqslant C|D(z,r)|_\omega\ \ \  \big(z\in \mathbb D \big)$$
 for some positive constant $C=C(r)$.

$(3)\Rightarrow (2)$: This was proved in Proposition \ref{mu Carleson measure}.

$(2)\Rightarrow (1)$: Assume that $\nu$ is a Carleson measure. By Condition $(2)$ and Lemma \ref{norm of kernel}, we obtain Condition $(3)$. Now
Proposition \ref{mu Carleson measure} implies that $\nu$ is also a Carleson measure for $L_h^1(\omega)$.  Then for  $f, g$ are harmonic on a neighborhood of $\overline{\mathbb D}$ (by Theorem \ref{Atomic Decomposition}, these functions are dense in $L_h^2(\omega)$),  we have
$$\big|\langle T_\nu f, g\rangle \big|\leqslant \int_{\mathbb D} |f(z)g(z)|d\nu(z)\leqslant C \|fg\|_{L^1(\omega)}\leqslant C\|f\|_{L^2(\omega)}\|g\|_{L^2(\omega)},$$ which  means that $T_\nu$ is bounded.

$(1)\Rightarrow (4)$: Suppose that $T_\nu$ is bounded on $L_h^2(\omega)$. We consider the partial sum $\sigma_N=\sum_{n=1}^N t_n K^\omega_{a_n}$, where $N\geqslant 1$, $\{t_n\}$ are complex numbers  and $\{a_n\}\subset \mathbb D$.  Direct calculation shows that
$$\|\sigma_N\|_{L^2(\nu)}\leqslant C \|\sigma_N\|_{L^2(\omega)}$$
for some constant $C>0$.  This implies that if $\lim\limits_{N\rightarrow \infty}\|\sigma_N-g\|_{L^2(\omega)}=0$ for some $g\in L_h^2(\omega)$, then
$$\lim_{N \rightarrow \infty} \langle f, \sigma_N \rangle_{L^2(\nu)}=\langle f, g\rangle_{L^2(\nu)}$$
for every $f\in L^2_h(\omega)$. Applying Theorem \ref{Atomic Decomposition cor} to obtain
$$\langle T_\nu f, g\rangle_{L^2(\omega)}=\langle f, g\rangle_{L^2(\nu)}$$
for every $f, g \in L_h^2(\omega)$. In particular, we have
$$\langle T_\nu R_z, R_z \rangle_{L^2(\omega)}=\langle R_z,   R_z\rangle_{L^2(\nu)}=\widetilde{\nu}(z)\|R_z\|_{L^2(\omega)}^2,$$
to get $\widetilde{\nu}(z)\leqslant \|T_\nu\|$ for all $z\in \mathbb D$. The proof of Theorem \ref{Boundedness1} is complete now.
\end{proof}

From the above theorem, it is natural to characterize the compactness of $T_\nu$ via  vanishing Carleson measure. In fact, we will characterize the compact Toeplitz operators with positive measures as the symbols  via not only  vanishing Carlson measure (for the $\mathcal{A}_2$ weighted harmonic Bergman space) but also  Berezin transform.
\begin{thm}\label{Comapctness1}
Let $\nu$ be a  positive finite Borel measure  on $\mathbb D$ and $\omega\in \mathcal{A}_2$. The following  conditions are equivalent:\\
(1) $T_\nu$ is compact on $L_h^2(\omega)$;\\
(2) $\nu$ is a vanishing Carlseon measure for $L_h^2(\omega)$, i.e.,
$$\lim_{|z|\rightarrow 1^-} \frac{\nu(D(z, r))}{|D(z, r)|_\omega}=0$$
for some $r\in (0, 1)$;\\
(3) The Berezin transform $\lim\limits_{|z|\rightarrow 1^-}\widetilde{\nu}(z)=0.$
\end{thm}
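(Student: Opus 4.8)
The plan is to prove the cycle $(3)\Rightarrow(2)\Rightarrow(1)\Rightarrow(3)$, treating each ``vanishing'' condition as the limiting (as $|z|\to 1^-$) analogue of the corresponding condition in Theorem \ref{Boundedness1}, and reusing the quantitative estimates established there.

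For $(3)\Rightarrow(2)$, I would recycle the kernel estimates from the proof of $(4)\Rightarrow(3)$ in Theorem \ref{Boundedness1}. Using the lower bound $|R_z(\lambda)|\geqslant \frac{1/2}{(1-|z|)^2}$ on $D(z,r)$ together with Lemma \ref{norm of kernel}, one obtains a pointwise inequality of the form
\[
\frac{\nu(D(z,r))}{|D(z,r)|_\omega}\leqslant C\,\widetilde{\nu}(z)\qquad(z\in\mathbb D),
\]
so that $\widetilde{\nu}(z)\to 0$ forces $\nu(D(z,r))/|D(z,r)|_\omega\to 0$. This step is essentially bookkeeping with the already-proven estimates and should present no difficulty.

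For $(2)\Rightarrow(1)$, the idea is the standard splitting $\nu=\nu^s+\nu_s$, where $\nu^s=\nu|_{\{|\lambda|\leqslant s\}}$ and $\nu_s=\nu|_{\{|\lambda|>s\}}$. The operator $T_{\nu^s}$ is compact because its symbol measure sits on a compact subset of $\mathbb D$: if $f_n\rightharpoonup 0$ in $L_h^2(\omega)$, then $f_n\to 0$ pointwise and locally uniformly (point evaluations being bounded on the reproducing kernel Hilbert space $L_h^2(\omega)$), so $\int_{\{|\lambda|\leqslant s\}}|f_n|^2\,d\nu\to 0$ by dominated convergence, and Cauchy--Schwarz against the Carleson measure $\nu^s$ then yields $\|T_{\nu^s}f_n\|\to 0$. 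For the tail $T_{\nu_s}$, I would invoke the quantitative equivalence $(1)\Leftrightarrow(3)$ of Theorem \ref{Boundedness1}, which bounds $\|T_{\nu_s}\|$ by a constant times $\sup_{z}\nu_s(D(z,r))/|D(z,r)|_\omega$. Splitting this supremum into the regime $|z|>t$ (controlled by the vanishing hypothesis $(2)$) and $|z|\leqslant t$ (where every $D(z,r)$ lies in a fixed compact set, so $\nu_s(D(z,r))=0$ once $s$ is close enough to $1$) shows $\|T_{\nu_s}\|\to 0$ as $s\to 1^-$. Hence $T_\nu$ is a norm limit of compact operators and is itself compact. The care needed here is the order of quantifiers in the tail estimate: one must first fix $t$ from $(2)$, then choose $s$ so that every interior pseudohyperbolic disk $D(z,r)$ with $|z|\leqslant t$ avoids $\{|\lambda|>s\}$.

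For $(1)\Rightarrow(3)$, set $k_z=R_z/\|R_z\|_{L^2(\omega)}$, so that the identity $\langle T_\nu f,g\rangle_{L^2(\omega)}=\langle f,g\rangle_{L^2(\nu)}$ from the proof of Theorem \ref{Boundedness1} gives $\widetilde{\nu}(z)=\langle T_\nu k_z,k_z\rangle_{L^2(\omega)}$. If $k_z\rightharpoonup 0$ weakly as $|z|\to 1^-$, then compactness of $T_\nu$ forces $\|T_\nu k_z\|\to 0$, whence $\widetilde{\nu}(z)\leqslant\|T_\nu k_z\|\to 0$. The delicate point, and what I expect to be the main obstacle, is precisely this weak convergence, because $k_z$ is built from the \emph{unweighted} kernel $R_z$ and is \emph{not} the normalized reproducing kernel of $L_h^2(\omega)$. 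I would verify it by testing against the dense span of the weighted kernels $\{K_w^\omega\}$: since $R_z$ is real-valued and $|R_z(w)|$ stays bounded for each fixed $w$, the reproducing property gives $\langle k_z,K_w^\omega\rangle_{L^2(\omega)}=R_z(w)/\|R_z\|_{L^2(\omega)}$, which tends to $0$ as soon as $\|R_z\|_{L^2(\omega)}\to\infty$. That last fact follows from Lemma \ref{norm of kernel} and the $\mathcal A_2$ relation $|D(z,r)|_\omega\,|D(z,r)|_{\omega^{-1}}\approx(1-|z|)^4$, giving $\|R_z\|_{L^2(\omega)}^2\approx|D(z,r)|_{\omega^{-1}}^{-1}\to\infty$ because $\omega^{-1}\in L^1$ forces $|D(z,r)|_{\omega^{-1}}\to 0$. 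Together with $\|k_z\|_{L^2(\omega)}=1$ and the usual density argument, this establishes $k_z\rightharpoonup 0$ and closes the cycle.
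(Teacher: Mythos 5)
Your proposal is correct, and two of your three implications coincide with the paper's: your $(3)\Rightarrow(2)$ is exactly the paper's estimate $\widetilde{\nu}(z)\geqslant C\,\nu(D(z,r_0))/|D(z,r_0)|_\omega$ from Lemma \ref{norm of kernel}, and your $(1)\Rightarrow(3)$ uses the same identity $\widetilde{\nu}(z)=\langle T_\nu k_z,k_z\rangle_{L^2(\omega)}$ and the same underlying mechanism for weak nullity of $k_z=R_z/\|R_z\|_{L^2(\omega)}$, namely that $\omega^{-1}\in L^1(dA)$ forces $\int_{D(z,r_0)}\omega^{-1}\,dA\to 0$; the paper packages this as locally uniform convergence of $k_z$ to zero via the pointwise bound $|R_z(\lambda)|^2/\|R_z\|^2_{L^2(\omega)}\leqslant C|R_z(\lambda)|^2\int_{D(z,r_0)}\omega^{-1}dA$, while you test against the dense span of the kernels $K_w^\omega$ and show $\|R_z\|_{L^2(\omega)}\to\infty$ --- the same fact in dual clothing. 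The genuine divergence is in $(2)\Rightarrow(1)$. The paper proves sequential compactness of the embedding $i:L_h^2(\omega)\to L^2(\nu)$ (so that $T_\nu=i^*i$ is compact) by splitting the \emph{integration domain} $\{|\xi|\leqslant s\}\cup\{|\xi|>s\}$ inside the Carleson estimate of Proposition \ref{mu Carleson measure} and letting a fixed bounded, locally uniformly null sequence $f_n$ do the work; you instead split the \emph{measure}, $\nu=\nu^s+\nu_s$, prove $T_{\nu^s}$ compact directly, and exhibit $T_\nu$ as a norm limit of compact operators. Your route requires one ingredient the paper never states explicitly but which its proofs do deliver: the constant in Proposition \ref{mu Carleson measure} (hence the bound on $\|T_{\nu_s}\|$ via $(3)\Rightarrow(2)\Rightarrow(1)$ of Theorem \ref{Boundedness1}) depends \emph{linearly} on $\sup_z \nu_s(D(z,r))/|D(z,r)|_\omega$, with the remaining factors depending only on $r$ and $[\omega]_{\mathcal A_2}$; granted that bookkeeping, and your correctly handled quantifier order (fix $t$ from the vanishing hypothesis, then choose $s>\frac{t+r}{1+tr}$ so that $D(z,r)\cap\{|\lambda|>s\}=\varnothing$ for $|z|\leqslant t$), the argument closes. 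What each buys: your version gives the quantitative extra conclusion $\|T_{\nu_s}\|\to 0$ as $s\to 1^-$ (distance of $T_\nu$ to the compacts), whereas the paper's sequential argument is shorter and needs no constant-tracking. One cosmetic caveat you share with the paper: condition (2) is stated ``for some $r\in(0,1)$,'' but both arguments actually use a radius $r\leqslant r_0$ where Lemma \ref{norm of kernel} and Proposition \ref{mu Carleson measure} apply; this is harmless since $(3)\Rightarrow(2)$ produces the radius $r_0$ anyway.
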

\begin{proof}
We will show that $(2)\Rightarrow (1)\Rightarrow (3)\Rightarrow (2)$.

$(2)\Rightarrow (1):$  To prove (1), we need only to show that the inclusion operator $i: L_h^2(\omega)\rightarrow L^2(\nu)$ is compact, i.e.,
$$\lim_{n\rightarrow \infty}\int_{\mathbb D} |f_n(z)|^2 d\nu(z)=0$$
whenever $\|f_n\|_{L^2(\omega)}\rightarrow 0 \ (n\rightarrow \infty)$, where $\{f_n\}_{n=1}^\infty$ is a bounded sequence in $L_h^2(\omega)$ which
converges to zero uniformly on each compact subset of $\mathbb D$.

 From the proof of Proposition \ref{mu Carleson measure}, there exists a positive constant $C=C(r)$ such that
\begin{align*}
\int_{\mathbb D} |f_n(z)|^2 d\nu(z) &\leqslant C \int_{\mathbb D}|D(\xi, r)|_\omega ^{-1}\nu(D(\xi, r)) |f_n(\xi)|^2 \omega(\xi)   dA(\xi)\\
&=C \bigg(\int_{|\xi|\leqslant s }+\int_{|\xi|>s } \bigg)\frac{\nu(D(\xi, r))}{ |D(\xi, r)|_\omega} |f_n(\xi)|^2 \omega(\xi) dA(\xi),
\end{align*}
where $s\in (0, 1)$. Under the assumption in (2), we can choose an $s_0\in (0, 1)$ to make the second  integral as small as we like; Fix $s_0$,
it is easy to show the first integral converges to zero,  since $f_n\rightarrow 0 \ (n\rightarrow \infty)$ uniformly on compact subsets.  This proves $(2)\Rightarrow (1)$.

$(1)\Rightarrow (3):$  Observe that
$$\widetilde{\nu}(z)=\bigg\langle T_\nu \frac{R_z}{\|R_z\|_{L^2(\omega)}}, \ \frac{R_z}{\|R_z\|_{L^2(\omega)}}\bigg\rangle_{L^2(\omega)} \leqslant \bigg\|T_\nu  \bigg(\frac{R_z}{\|R_z\|_{L^2(\omega)}}\bigg)\bigg\|_{L^2(\omega)}.$$
So, it sufficient for us to show that $\frac{R_z}{\|R_z\|_{L^2(\omega)}}$ converges to zero weakly in $L_h^2(\omega)$ as $|z|\rightarrow 1^-.$
Note  that $\frac{R_z}{\|R_z\|_{L^2(\omega)}}$ is an unit vector in $L_h^2(\omega)$, we need only to show it converges to zero uniformly on compact subsets of $\mathbb D$ as $|z|\rightarrow 1^-$.
Observe that Lemma \ref{norm of kernel} implies that there exists a positive  constants $C=C(r_0)$ such that
$$\bigg|\frac{R_z(\lambda)}{\|R_z\|_{L^2(\omega)}}\bigg|^2
\leqslant   C |R_z(\lambda)|^2 \int_{D(z, r_0)}{\omega^{-1}}dA.$$
It is clear that $\frac{R_z}{\|R_z\|_{L^2(\omega)}}$ converges to zero uniformly (as $|z|\rightarrow 1^-$) on each disk $|\lambda|\leqslant s<1$, since $|D(z, r_0)|\rightarrow 0$ as $|z|\rightarrow 1^-$ and $\omega^{-1}\in L^1(dA)$.

$(3)\Rightarrow (2):$  By the definition of  $\widetilde{\nu}$ and Lemma \ref{norm of kernel},
there exists a constant $C=C(r_0)>0$  such that
\begin{align*}
\widetilde{\nu}(z) & =\frac{1}{\|R_z\|_{L^2(\omega)}^2}\int_{\mathbb D}|R_z(\lambda)|^2d\nu(\lambda)\\
& \geqslant C \frac{(1-|z|)^4}{|D(z, r_0)|_\omega} \int_{D(z, r_0)}|R_z(\lambda)|^2d\nu(\lambda)\\
& \geqslant  \frac{C}{4} \frac{\nu(D(z, r_0))}{|D(z, r_0)|_\omega}.
\end{align*}
Then the desired result follows, to complete the proof of Theorem \ref{Comapctness1}.
\end{proof}

In the rest of this section, we will consider the special class of compact Toeplitz operators to give a characterization of  $\nu$ for $T_\nu$ to be in the Schatten class $\mathcal S^p\ (p\geqslant 1)$. The following theorem is the third main result in Section 3.

\begin{thm}\label{Schatten p class}
Let $\nu$ be a  positive finite Borel measure  on $\mathbb D$ and $\omega\in \mathcal{A}_2$. Then for $ 1\leqslant p< \infty$,
$T_\nu \in \mathcal S^p$ if and only if
$$\sum_{n= 1}^\infty\bigg(\frac{\nu(D(a_n, \epsilon))}{|D(a_n, \epsilon)|_\omega}\bigg)^p<+\infty,$$
where $\big(\{a_n\}_{n=1}^\infty,  \ \epsilon\big)$ is the $\epsilon$-lattice obtained by Theorem \ref{Atomic Decomposition}.
\end{thm}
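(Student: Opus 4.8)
The plan is to transfer everything to the sequence space $\ell^p$ indexed by the $\epsilon$-lattice $\{a_n\}$, handling the two implications separately. Throughout set $\hat\nu_n:=\nu(D(a_n,\epsilon))\,|D(a_n,\epsilon)|_\omega^{-1}$. Since $T_\nu\geqslant 0$ and, by the proof of Theorem \ref{Boundedness1}, $\langle T_\nu f,f\rangle_{L^2(\omega)}=\int_{\mathbb D}|f|^2\,d\nu$ and $\langle T_\nu R_z,R_z\rangle_{L^2(\omega)}=\widetilde\nu(z)\,\|R_z\|_{L^2(\omega)}^2$, the statement is exactly a comparison between the singular values of $T_\nu$ and the sequence $\{\hat\nu_n\}$. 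I will use repeatedly the Weyl monotonicity principle: if $0\leqslant A\leqslant B$ are positive operators then $s_k(A)\leqslant s_k(B)$, hence $\|A\|_{\mathcal S^p}\leqslant\|B\|_{\mathcal S^p}$.

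For the sufficiency, assume $\{\hat\nu_n\}\in\ell^p$. First I would dominate $T_\nu$ by a model operator built from the lattice. Fixing a Borel partition $\mathbb D=\bigsqcup_n E_n$ with $a_n\in E_n\subset D(a_n,\epsilon)$ (e.g. $E_n=D(a_n,\epsilon)\setminus\bigcup_{m<n}D(a_m,\epsilon)$) and arguing as in Proposition \ref{mu Carleson measure} (the subharmonic bound of Lemma \ref{subharmonic behavior of harmonic functions}, the $\mathcal A_2$ condition, and the doubling of Lemma \ref{two disks}), one obtains for a fixed $\sigma$ with $\epsilon<\sigma\leqslant\tfrac14$
\[
\int_{\mathbb D}|f|^2\,d\nu\;\leqslant\; C\sum_{n=1}^\infty\hat\nu_n\int_{D(a_n,\sigma)}|f|^2\,\omega\,dA\;=\;C\,\big\langle\Lambda(\{\hat\nu_n\})f,\,f\big\rangle_{L^2(\omega)},
\]
where $\Lambda(\{\lambda_n\}):=\sum_n\lambda_n Q_n$ and $Q_n$ is the positive contraction on $L_h^2(\omega)$ with $\langle Q_nf,f\rangle=\int_{D(a_n,\sigma)}|f|^2\omega\,dA$. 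Thus $0\leqslant T_\nu\leqslant C\,\Lambda(\{\hat\nu_n\})$, so by Weyl monotonicity it suffices to place the model operator in $\mathcal S^p$. The map $\Lambda$ is linear, and I would check two endpoints: using the diagonal estimate $\|K^\omega_z\|_{L^2(\omega)}^2\approx|D(z,\sigma)|_\omega^{-1}$ (upper bound from the pointwise control of point evaluations, lower bound from the test function $R_z$ via Lemma \ref{norm of kernel}) gives $\mathrm{tr}(Q_n)=\int_{D(a_n,\sigma)}\|K^\omega_z\|_{L^2(\omega)}^2\,\omega\,dA\approx 1$, whence $\Lambda:\ell^1\to\mathcal S^1$ is bounded; while the finite overlap of $\{D(a_n,\sigma)\}$ yields $\|\Lambda(\{\lambda_n\})\|\leqslant C\sup_n|\lambda_n|$, i.e. $\Lambda:\ell^\infty\to\mathcal S^\infty$ is bounded. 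Complex interpolation ($[\ell^1,\ell^\infty]_\theta=\ell^p$, $[\mathcal S^1,\mathcal S^\infty]_\theta=\mathcal S^p$) then gives $\|\Lambda(\{\hat\nu_n\})\|_{\mathcal S^p}\leqslant C\|\{\hat\nu_n\}\|_{\ell^p}$, and hence $\|T_\nu\|_{\mathcal S^p}^p\leqslant C\sum_n\hat\nu_n^p$.

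For the necessity, assume $T_\nu\in\mathcal S^p$ and test against the normalized kernels $k_n:=R_{a_n}/\|R_{a_n}\|_{L^2(\omega)}$. The computation in the proof of Theorem \ref{Comapctness1} together with Lemma \ref{norm of kernel} (and the doubling Lemma \ref{two disks} to match the radii $r_0$ and $\epsilon$) gives $\langle T_\nu k_n,k_n\rangle_{L^2(\omega)}=\widetilde\nu(a_n)\geqslant c\,\hat\nu_n$. Since $\{k_n\}$ is not orthonormal, I would split $\mathbb N=\bigsqcup_{j=1}^M G_j$ into finitely many sublattices, each so sparsely separated in the pseudohyperbolic metric that $\{k_n\}_{n\in G_j}$ is a Riesz sequence with Riesz bounds independent of $j$; here $M$ depends only on $\epsilon$ and the chosen separation. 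Writing $V_j:\ell^2(G_j)\to L_h^2(\omega)$ for the bounded synthesis operator $V_j(\{c_n\})=\sum_{n\in G_j}c_n k_n$, the operator $V_j^*T_\nu V_j$ is positive with diagonal entries $\langle T_\nu k_n,k_n\rangle=\widetilde\nu(a_n)$, $n\in G_j$. For a positive $A\in\mathcal S^p$ with $p\geqslant1$, the convexity of $t\mapsto t^p$ (Jensen applied to the spectral measure) yields $\sum_k\langle Ae_k,e_k\rangle^p\leqslant\|A\|_{\mathcal S^p}^p$ for any orthonormal system $\{e_k\}$; applied to $A=V_j^*T_\nu V_j$ together with the ideal property this gives $\sum_{n\in G_j}\widetilde\nu(a_n)^p\leqslant\|V_j\|^{2p}\|T_\nu\|_{\mathcal S^p}^p$. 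Summing over the $M$ classes and using $\widetilde\nu(a_n)\geqslant c\,\hat\nu_n$ produces $\sum_n\hat\nu_n^p\leqslant C\,\|T_\nu\|_{\mathcal S^p}^p<\infty$.

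The step I expect to be most delicate is the almost-orthogonality underpinning both directions. In the analytic setting the Cauchy formula makes the off-diagonal decay of the Gram entries transparent; here I must instead estimate $\langle R_{a_n},R_{a_m}\rangle_{L^2(\omega)}$ directly from the explicit but less convenient formula for the unweighted kernel $R_\lambda$, combined with the $\mathcal A_2$ bounds and the doubling Lemma \ref{two disks}, in order to run a Schur-test/diagonal-dominance argument and extract the uniform Riesz-sequence property on each sublattice. The companion diagonal estimate $\|K^\omega_z\|_{L^2(\omega)}^2\approx|D(z,\sigma)|_\omega^{-1}$ used in the sufficiency part rests on the same subharmonic-averaging technique as in Proposition \ref{mu Carleson measure}, since no closed form for the weighted harmonic kernel $K^\omega_z$ is available. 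Once these weighted-kernel estimates are established, the remaining ingredients are the standard interpolation, Weyl monotonicity, and diagonal-convexity facts quoted above.
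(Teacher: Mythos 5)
Your sufficiency argument is essentially the paper's: the paper also proves the $\mathcal S^1$ endpoint by the trace computation $\sum_n\langle T_\nu e_n,e_n\rangle=\int_{\mathbb D}K^\omega_z(z)\,d\nu(z)$ together with the diagonal estimate $K^\omega_z(z)\approx|D(z,r)|_\omega^{-1}$ (its Lemma \ref{Schatten class lemma}, proved exactly by the subharmonic-averaging you describe, plus Lemma \ref{norm of kernel}), gets $\mathcal S^\infty$ from the boundedness criterion, and then interpolates. If anything your version is tighter: the paper invokes Schatten interpolation without exhibiting the linear map to which it applies, whereas your operator $\Lambda(\{\lambda_n\})=\sum_n\lambda_nQ_n$ together with $0\leqslant T_\nu\leqslant C\Lambda(\{\hat\nu_n\})$ and Weyl monotonicity makes that step legitimate as stated. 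Your necessity argument has the same skeleton as the paper's (conjugate $T_\nu$ by a bounded synthesis operator, use the ideal property and the convexity bound $\sum_k\langle Ae_k,e_k\rangle^p\leqslant\|A\|_{\mathcal S^p}^p$ for positive $A$, then the pointwise lower bound $|R_{a_n}(z)|\geqslant\tfrac12(1-|a_n|)^{-2}$ on $D(a_n,\epsilon)$), but you and the paper diverge on how the synthesis operator is shown to be bounded, and that is where your proposal has its one genuine gap.

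The step you yourself flag --- splitting into sparse sublattices and proving a uniform Riesz-sequence property for $\{k_n\}_{n\in G_j}$ via off-diagonal decay of the Gram entries $\langle R_{a_n},R_{a_m}\rangle_{L^2(\omega)}$ --- is left unproved, and for an arbitrary $\mathcal A_2$ weight it is genuinely problematic: without a reproducing property for $L^2(\omega)$-inner products of the \emph{unweighted} kernels, these Gram entries are not kernel values, and a Schur test would require weighted off-diagonal estimates you have no tools for here. The good news is that the step is also unnecessary, in two respects. First, your final inequality $\sum_{n\in G_j}\widetilde\nu(a_n)^p\leqslant\|V_j\|^{2p}\|T_\nu\|_{\mathcal S^p}^p$ uses only the \emph{upper} bound $\|V_j\|<\infty$; the Riesz lower bound never enters, so the hard half of your almost-orthogonality claim can be dropped outright. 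Second, the upper (Bessel) bound holds on the \emph{whole} lattice, no sublattice splitting needed: this is exactly what the paper's Theorem \ref{Atomic Decomposition} machinery provides, since the map $\{c_n\}\mapsto\sum_n c_n(1-|a_n|^2)^2|D(a_n,\epsilon)|_\omega^{-1/2}R_{a_n}$ is the adjoint $\mathscr L^*$ of the sampling operator on $L_h^2(\omega^{-1})$, whose boundedness follows from the Carleson embedding of Proposition \ref{mu Carleson measure} applied to the weight $\omega^{-1}$ together with the duality Lemma \ref{duality} and the $\mathcal A_2$ identity $|D(a_n,\epsilon)|_{\omega^{-1}}\approx(1-|a_n|)^4|D(a_n,\epsilon)|_\omega^{-1}$. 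This is precisely the paper's operator $\mathscr A$ (its $h_n$ are your $k_n$ up to uniformly bounded constants, by Lemma \ref{norm of kernel}); substituting it for your $V_j$'s closes the gap using only results already established in Section 2, and the rest of your necessity computation then matches the paper's line for line. (Incidentally, the paper's own one-line Cauchy--Schwarz justification of $\|\mathscr A\|<\infty$ is too crude --- $\sum_n\|h_n\|^2$ diverges since $\|h_n\|\approx1$ --- so the duality route just described is the correct reading there as well.)
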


In order to prove the above result, we need one more lemma, which is a straightforward consequence of Lemmas  \ref{subharmonic behavior of harmonic functions} and \ref{norm of kernel}.
\begin{lemma}\label{Schatten class lemma}
Let $\omega \in \mathcal{A}_2$ and $0<r\leqslant r_0$. There exists a constant $C=C(r)>0$ such that
$$C^{-1} \leqslant  K^\omega_z(z) \cdot |D(z, r)|_\omega \leqslant C$$
for $z\in \mathbb D$, where $K^\omega_z$ is the reproducing kernel of $L_h^2(\omega)$.
\end{lemma}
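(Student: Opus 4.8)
The plan is to exploit the identity $K^\omega_z(z)=\|K^\omega_z\|_{L^2(\omega)}^2$, which holds because $K^\omega_z$ is the reproducing kernel of $L_h^2(\omega)$, and then to show that $K^\omega_z(z)$ is comparable to $|D(z,r)|_\omega^{-1}$ from both sides. I first record that $K^\omega_z$ is itself a harmonic function lying in $L_h^2(\omega)$, so the subharmonic estimates of Lemma \ref{subharmonic behavior of harmonic functions} apply to it, and that $K^\omega_z(z)=\|K^\omega_z\|_{L^2(\omega)}^2>0$ is real and positive.

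For the upper bound I would reuse the pointwise estimate obtained inside the proof of Proposition \ref{mu Carleson measure}: combining Lemma \ref{subharmonic behavior of harmonic functions} with the Cauchy-Schwarz inequality and the $\mathcal{A}_2$ condition yields, for every $g\in L_h^2(\omega)$ and $0<r\leqslant r_0$,
$$|g(z)|^2\leqslant \frac{C\,[\omega]_{\mathcal{A}_2}}{|D(z,r)|_\omega}\int_{D(z,r)}|g(\lambda)|^2\omega(\lambda)\,dA(\lambda)\leqslant \frac{C\,[\omega]_{\mathcal{A}_2}}{|D(z,r)|_\omega}\|g\|_{L^2(\omega)}^2.$$
Applying this with $g=K^\omega_z$ gives $K^\omega_z(z)^2\leqslant C|D(z,r)|_\omega^{-1}K^\omega_z(z)$, and dividing by the positive quantity $K^\omega_z(z)$ produces $K^\omega_z(z)\leqslant C|D(z,r)|_\omega^{-1}$, i.e. the desired upper bound $K^\omega_z(z)\,|D(z,r)|_\omega\leqslant C$.

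For the lower bound I would test against $R_z$, which belongs to $L_h^2(\omega)$ by Lemma \ref{norm of kernel}. The extremal property of the reproducing kernel, $|f(z)|^2\leqslant\|f\|_{L^2(\omega)}^2\,K^\omega_z(z)$ for all $f\in L_h^2(\omega)$, gives in particular
$$K^\omega_z(z)\geqslant \frac{|R_z(z)|^2}{\|R_z\|_{L^2(\omega)}^2}.$$
By the remark following Theorem \ref{Atomic Decomposition} (taking $\lambda=z$ and noting $z\in D(z,r)$) we have $|R_z(z)|\geqslant \frac{1}{2}(1-|z|)^{-2}$, while Lemma \ref{norm of kernel} gives $\|R_z\|_{L^2(\omega)}^2\leqslant C|D(z,r)|_\omega(1-|z|)^{-4}$. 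Substituting these two estimates makes the factors $(1-|z|)^{-4}$ cancel, leaving $K^\omega_z(z)\geqslant C^{-1}|D(z,r)|_\omega^{-1}$, which is exactly $K^\omega_z(z)\,|D(z,r)|_\omega\geqslant C^{-1}$.

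Combining the two bounds proves the lemma. Since every ingredient is already in hand, namely the subharmonic/$\mathcal{A}_2$ pointwise bound extracted from the proof of Proposition \ref{mu Carleson measure} and the kernel-norm comparison of Lemma \ref{norm of kernel}, there is no genuine obstacle here; the only points requiring a little care are to note that $K^\omega_z\in L_h^2(\omega)$ is harmonic so that the subharmonic estimate may legitimately be applied to it, and that the extremal characterization $K^\omega_z(z)=\sup_{\|f\|_{L^2(\omega)}=1}|f(z)|^2$ (realized here by testing against $R_z$) is what delivers the lower estimate.
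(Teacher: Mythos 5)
Your proposal is correct and follows essentially the same route as the paper: the upper bound via the subharmonic/$\mathcal{A}_2$ pointwise estimate applied to $K^\omega_z$ together with $K^\omega_z(z)=\|K^\omega_z\|_{L^2(\omega)}^2$, and the lower bound by testing the reproducing identity against $R_z$ (your extremal-property formulation is just Cauchy--Schwarz on $R_z(z)=\langle R_z, K^\omega_z\rangle_{L^2(\omega)}$, which is exactly the paper's step) combined with Lemma \ref{norm of kernel}. The only cosmetic difference is that you quote the remark after Theorem \ref{Atomic Decomposition} for $|R_z(z)|\geqslant \tfrac{1}{2}(1-|z|)^{-2}$, whereas the paper computes $R_z(z)=\tfrac{2}{(1-|z|^2)^2}-1$ directly; in fact your version is the safer one, since the paper's stated inequality $\tfrac{1}{(1-|z|)^2}\leqslant \tfrac{2}{(1-|z|^2)^2}-1$ fails for $|z|$ near $1$ and should be read with a harmless constant such as $\tfrac12$.
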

\begin{proof} Note that $K^{\omega}_{\lambda}(\lambda)=\|K^\omega_\lambda\|_{L^2(\omega)}^2$
for each $\lambda\in \mathbb D$.  Applying  Lemma   \ref{subharmonic behavior of harmonic functions} to the function $K^\omega_\lambda(z)$, we get a constant $C$ depends only on $r$ such that
$$|K^\omega_\lambda(z)|^2\leqslant \frac{C \| K^\omega_\lambda\|_{L^2(\omega)}^2}{|D(z, r)|_\omega}=\frac{C  K^\omega_\lambda(\lambda)}{ |D(z, r)|_\omega}.$$
Taking $\lambda=z$ to get the inequality on the right hand side in Lemma \ref{Schatten class lemma}.

For the reverse inequality, note that for each $z\in \mathbb D$  we have
\begin{align*}
\frac{1}{(1-|z|)^2}& \leqslant \frac{2}{(1-|z|^2)^2}-1\\
&=R_z (z)=\big\langle R_z, K^\omega_z\big\rangle_{L^2(\omega)}\\
&\leqslant \|R_z\|_{L^2(\omega)}\cdot \|K^\omega_z\|_{L^2(\omega)}\\
& \leqslant C \frac{|D(z, r)|_\omega^{\frac{1}{2}}}{(1-|z|)^2}\cdot \sqrt{K^\omega_z(z)},
\end{align*}
where the constant $C$ comes from Lemma \ref{norm of kernel}. This finishes the proof.
\end{proof}

We are ready to prove Theorem \ref{Schatten p class}. The method of its proof  is quite standard.
\begin{proof}[Proof of Theorem \ref{Schatten p class}] Suppose that the  series converges.  We consider the $\epsilon$-lattice $\{a_n\}_{n=1}^\infty$ given by Theorem \ref{Atomic Decomposition}, recall that $\epsilon<r_0$ (see the proof of Proposition \ref{mu reverse Carleson measure}). For an arbitrary orthonormal basis
$\{e_n\}_{n=1}^\infty$ of $L_h^2(\omega)$,
we have
\begin{align*}
\sum_{n=1}^\infty \langle T_\nu e_n, e_n \rangle &=\sum_{n=1}^\infty \int_{\mathbb D} |e_n(z)|^2 d\nu(z)=\int_{\mathbb D} K^\omega_z(z)d\nu(z)\\
& \leqslant \sum_{n=1}^\infty \int_{D(a_n, \epsilon)} K^\omega_z(z)d\nu(z)\\
& \leqslant C \sum_{n=1}^\infty \int_{D(a_n, \epsilon)} |D(z, \epsilon)|_\omega ^{-1}d\nu(z),
\end{align*}
the constant $C$ comes from Lemma \ref{Schatten class lemma}, which depends only on $\epsilon$. Note that $\rho(z, a_n)<\epsilon$ for every $n\geqslant 1$,  by Lemma \ref{two disks} and its proof we can choose a constant $C_1=C_1(\epsilon)$ such that
$$\sum_{n=1}^\infty \langle T_\nu e_n, e_n \rangle \leqslant
 C_1 \sum_{n=1}^\infty \frac{\nu(D(a_n, \epsilon))}{|D(a_n, \epsilon)|_\omega}.$$
 This shows that $T_\nu \in \mathcal S^1$ provided $\sum_{n=1}^\infty \frac{\nu(D(a_n, \epsilon))}{|D(a_n, \epsilon)|_\omega}<+\infty$.

On the other hand, if $\sup\limits_{n\geqslant 1}\frac{\nu(D(a_n, \epsilon))}{|D(a_n, \epsilon)|_\omega}<+\infty$,
then by the proof Theorem 7.4 in \cite{Zhu2007} (or the proof of $(3)\Rightarrow (2)$ in Theorem \ref{Boundedness1}), we deduce that $T_\nu$ is bounded on $L_h^2(\omega)$, i.e., $T_\nu \in \mathcal S^\infty$. Now applying the interpolation theorem for the Schatten classes (see Theorem 2.6 in \cite{Zhu2007} if needed), we  obtain that $T_\nu \in \mathcal S^p$ for each $p\in (1, +\infty)$ if
$$\sum_{n=1}^\infty \bigg( \frac{\nu(D(a_n, \epsilon))}{|D(a_n, \epsilon)|_\omega}\bigg)^p<+\infty.$$

Conversely, we assume that $T_\nu \in \mathcal S^p$ for $1\leqslant p<\infty$.  We recall  by Theorem \ref{Atomic Decomposition} that each $f\in L_h^2(\omega)$ has the following form:
\begin{align*}
f(z)&=\sum_{n=1}^\infty c_n (1-|a_n|)^2 R_{a_n}(z)|D(a_n, \epsilon)|_\omega^{-\frac{1}{2}}\\
&:=\sum_{n=1}^\infty c_n h_n(z),
\end{align*}
where $\{c_n\}_{n=1}^\infty \in \ell^2(\mathbb N)$.
 Cauchy-Schwartz inequality shows that
$$\|f\|_{L^2(\omega)}^2 \leqslant \bigg(\sum_{n=1}^\infty |c_n|^2 \bigg)\cdot \bigg(\sum_{n=1}^\infty \frac{(1-|a_n|)^4}{|D(a_n, \epsilon)|_\omega} \|R_{a_n}\|_{L^2(\omega)}^2\bigg).$$

Since $\epsilon<r_0$, we have by Lemma \ref{norm of kernel} that
$$\|f\|_{L^2(\omega)}^2 \leqslant C \sum_{n=1}^\infty |c_n|^2$$
where the constant  $C>0$ depends only on $\epsilon$.

Fix an orthonormal basis $\{e_n\}_{n=1}^\infty$ for $L_h^2(\omega)$ and define a linear operator $\mathscr A$ on $L_h^2(\omega)$ by
$$\mathscr A \bigg( \sum_{n=1}^\infty c_n e_n \bigg)=\sum_{n=1}^\infty c_n h_n.$$
Then $\mathscr A$ is a bounded surjective linear operator on $L_h^2(\omega)$. Thus the  $\mathscr A^*$ is well-defined on $L_h^2(\omega)$ and
$\mathscr A^*T_\nu \mathscr A \in \mathcal S^p$, so that
$$\sum_{n=1}^\infty \big| \langle \mathscr A^*T_\nu \mathscr A e_n, e_n \rangle_{L^2(\omega)}\big|^p<+\infty.$$
On the other hand,
\begin{align*}
\sum_{n=1}^\infty \big| \langle \mathscr A^*T_\nu \mathscr A e_n, e_n \rangle_{L^2(\omega)}\big|^p&=\sum_{n=1}^\infty \big| \langle T_\nu \mathscr A e_n,  \mathscr A e_n \rangle_{L^2(\omega)}\big|^p\\
&=\sum_{n=1}^\infty \big| \langle T_\nu h_n,  h_n\rangle_{L^2(\omega)}\big|^p \ \ \ \big(\mathrm{by \ the \  definition \ of} \ \mathscr A \big)\\
&=\sum_{n=1}^\infty \bigg( \int_{\mathbb D} |h_n(z)|^2  d\nu(z)\bigg)^p\\
&\geqslant \sum_{n=1}^\infty \bigg( \int_{D(a_n, \epsilon)} |h_n(z)|^2  d\nu(z)\bigg)^p.
\end{align*}
Recall
$$|h_n(z)|^2=\frac{(1-|a_n|)^4 |R_{a_n}(z)|^2}{|D(a_n, \epsilon)|_\omega},$$
 we have  by Lemma \ref{norm of kernel} that
$$|h_n(z)|^2\geqslant \frac{1}{4|D(a_n, \epsilon)|_\omega}$$
 if $\rho (z, a_n)<\epsilon<r_0$ for each $n\geqslant 1$. Therefore,
\begin{align*}
\sum_{n=1}^\infty \big| \langle \mathscr A^*T_\nu \mathscr A e_n, e_n \rangle_{L^2(\omega)}\big|^p \geqslant 4^{-p} \sum_{n=1}^\infty
\bigg( \frac{\nu(D(a_n, \epsilon))}{|D(a_n, \epsilon)|_\omega}\bigg)^p.
\end{align*}
This completes the proof of Theorem \ref{Schatten p class}.
\end{proof}

\section{Invertibility of Toeplitz operators on $L_h^2(\omega_\alpha)$}

 A fundamental and interesting  problem is to determine when a Toeplitz operator is invertible on the Hardy or Bergman
space.  In this section, we study the invertibility problem of Toeplitz operators on the standard weighted harmonic Bergman space $L_h^2(\omega_\alpha)$ with
 $\omega_\alpha=(1+\alpha)(1-|z|^2)^\alpha, \alpha>-1$. Recall that the reproducing kernel for $L_h^2(\omega_\alpha)$ is given by
$$R^\alpha_z(\lambda)=K^\alpha_z(\lambda)+\overline{K^\alpha_z(\lambda)}-1 \ \ \  \big(z, \lambda \in \mathbb D\big),$$
where
 $$K^\alpha_z(\lambda)=\frac{1}{(1-\overline{z}\lambda)^{2+\alpha}}$$
is the reproducing kernel for the weighted Bergman space $L_a^2(\omega_\alpha)$, see \cite{Shu} if needed.

For the (unweighted) Bergman space setting, the second author and Zheng provided a necessary and sufficient condition
 for the Toeplitz operators with nonnegative symbols to be invertible  on $L_a^2$ (\cite{Zhao}). The main tool used in \cite{Zhao} is Luecking's result  on the reverse Carleson measure for Bergman space (\cite{Lue}),  which also holds for the harmonic Bergman space. More precisely, Luecking established the following result.

\begin{lemma}(\cite{Luecking1983})\label{reverse Carleson measure1}
Suppose that $G$ is a measurable subset of $\mathbb D$. Then the following are equivalent:\\
(i) There exists a  $\delta\in (0, 1)$ such that
$$ |G\cap K| \geqslant \delta |\mathbb D \cap K|$$
for every ball $K$ whose center lies on $\partial \mathbb D$;\\
(ii) $\chi_{G}dA$ is a reverse Carleson measure for $L_h^2(\omega_\alpha)$. That is,  there is a constant $C>0$ such that
$$\int_{\mathbb D}|f(z)|^2 \omega_\alpha(z) dA(z)\leqslant C \int_{G}|f(z)|^2 \omega_\alpha (z) dA(z)$$
for all $f\in L_h^2(\omega_\alpha)$.
\end{lemma}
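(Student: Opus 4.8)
The plan is to prove the two implications separately, following Luecking's strategy for the analytic weighted Bergman space in \cite{Luecking1983} and observing that the only place analyticity is used can be replaced by the sub-mean value property of $|f|^2$ recorded in Lemma \ref{subharmonic behavior of harmonic functions}. I would stress at the outset that for general $\alpha>-1$ the weight $\omega_\alpha$ need not be an $\mathcal A_2$ weight, so the $\epsilon$-lattice machinery of Section 2 is \emph{not} available here; a direct argument is genuinely required.

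For the necessity $(ii)\Rightarrow(i)$ I would argue by contraposition using normalized reproducing kernels as test functions. If $(i)$ fails, there is a sequence of balls $K_j$ centered on $\partial\mathbb D$ with $|G\cap K_j|/|\mathbb D\cap K_j|\to 0$. Writing $\rho_j$ and $\zeta_j$ for the radius and center of $K_j$ and setting $z_j=(1-\rho_j)\zeta_j$, consider $k_{z_j}=R^\alpha_{z_j}/\|R^\alpha_{z_j}\|_{L^2(\omega_\alpha)}$. Using $R^\alpha_z=K^\alpha_z+\overline{K^\alpha_z}-1$ together with the standard estimates for $K^\alpha_z$, the unit mass $|k_{z_j}|^2\omega_\alpha\,dA$ is, up to an arbitrarily small tail, a roughly uniform density on a pseudohyperbolic disk $D(z_j,c)\subset K_j$ of comparable size. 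Since $|G\cap D(z_j,c)|\leqslant|G\cap K_j|\ll|K_j|\approx|D(z_j,c)|$, the proportion of this mass carried by $G$ tends to $0$, so $\int_G|k_{z_j}|^2\omega_\alpha\,dA\to 0$ while $\|k_{z_j}\|_{L^2(\omega_\alpha)}=1$, contradicting $(ii)$.

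The substantial direction is the sufficiency $(i)\Rightarrow(ii)$, and here I would transcribe Luecking's argument. The point is that his proof never uses analyticity of $f$ beyond the sub-mean value inequality for $|f|^2$ over pseudohyperbolic disks; every other ingredient is geometric, namely the comparison of balls centered on $\partial\mathbb D$ with Carleson boxes, the fact that $\omega_\alpha$ is essentially constant at each scale, and the density hypothesis holding at every scale. Replacing that single ingredient by Lemma \ref{subharmonic behavior of harmonic functions}, applied to the full harmonic function $f$ (rather than to an analytic part, which would force one to deal with the cross terms in $f=g+\overline h$), carries the whole argument over. Concretely, one decomposes $\mathbb D$ into a Whitney-type family of Carleson boxes on each of which $(i)$ guarantees that $G$ occupies a fixed proportion, controls $\int|f|^2\omega_\alpha$ by the local averages $|D|^{-1}\int_D|f|^2$ via Lemma \ref{subharmonic behavior of harmonic functions}, and then uses the at-all-scales density to recapture a fixed fraction of each average from $G$, summing over scales.

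I expect the main obstacle to be exactly this summation over scales in the sufficiency direction: a single harmonic function superposes many frequencies, and the naive local statement ``$G$ of positive proportion in a fixed pseudohyperbolic disk dominates $\int_D|f|^2$'' is \emph{false}, as the example $f(z)=z^k$ on a disk centered at the origin already shows (a central subdisk of positive area carries a vanishing fraction of the $L^2$ mass as $k\to\infty$). Hence one cannot argue disk by disk. It is the boundary-ball condition $(i)$ holding simultaneously at all scales, together with the subharmonic bookkeeping of $|f|^2$, that forces domination, and reproducing this balancing faithfully is the delicate part; it is precisely the content of \cite{Luecking1983}, whose remaining computational details I would cite rather than reprove.
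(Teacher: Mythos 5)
The first thing to say is that the paper offers no proof of this lemma at all: it is quoted verbatim with the citation \cite{Luecking1983}, and that reference --- Luecking's ``Equivalent norms on $L^p$ spaces of \emph{harmonic} functions'' --- already treats harmonic functions with the standard weights. So your framing premise, that \cite{Luecking1983} handles the analytic case and a harmonic adaptation via Lemma \ref{subharmonic behavior of harmonic functions} is ``genuinely required,'' rests on a misreading; the analytic counterparts are \cite{Lue} and \cite{Luecking1985}, while the statement here is Luecking's harmonic theorem as is. That said, your description of how the sufficiency direction works (all-scales density plus the sub-mean value property of $|f|^2$, with the observation that a single-disk statement is false, as your $z^k$ example correctly shows) is an accurate account of Luecking's actual argument, and your remark that $\omega_\alpha\notin\mathcal A_2$ for $\alpha\geqslant 1$ is a genuinely good catch: it explains why Theorem \ref{the weight w} of Section 5 (which runs exactly this scheme for $\mathcal A_2$ weights, via Lemmas \ref{measure estimate3}--\ref{the integral over B}) does not subsume the present lemma. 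Since you, like the paper, ultimately defer the computational details to the same citation, the two ``proofs'' coincide in substance.

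The one direction you do sketch independently, $(ii)\Rightarrow(i)$, contains a quantitative flaw as written. With $z_j=(1-\rho_j)\zeta_j$ the measure $|k_{z_j}|^2\omega_\alpha\,dA$ does \emph{not} put all but an arbitrarily small tail on a pseudohyperbolic disk $D(z_j,c)\subset K_j$ of roughly uniform density: for fixed $c$ the mass of $D(z_j,c)$ is only a fixed fraction of $1$, and making the tail small forces $c\to 1$, at which point the Euclidean radius of $D(z_j,c)$ is of order $\rho_j/(1-c^2)$ and the disk no longer sits inside $K_j$, while the density on it is far from uniform. The correct mechanism --- and it is precisely the paper's Lemma \ref{D-K}, quoted from \cite{Luecking1983}, \cite{Miao1998}, \cite{Sledd} --- is to place the test point at depth $s\rho_j$, i.e.\ $z_j=(1-s\rho_j)\zeta_j$ with $s=s(\epsilon)$ small, so that the kernel concentrates at a scale well inside $K_j$ and the tail $\int_{\mathbb D\setminus K_j}|f|^2(1-|\lambda|)^\alpha\,dA$ is genuinely small, and then to bound $\int_{G\cap K_j}|f|^2(1-|\lambda|)^\alpha\,dA$ not by a uniform-density comparison but by a H\"older argument yielding $C\big(|G\cap K_j|/|\mathbb D\cap K_j|\big)^\beta$, with $\beta<1$ when $-1<\alpha<0$. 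Your conclusion $\int_G|k_{z_j}|^2\omega_\alpha\,dA\to 0$ then follows, but only through this two-parameter balancing; as literally stated, your version of the necessity argument would not close.
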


Motivated by the ideas  and techniques  used in \cite{Zhao}, we are able to characterize the invertiblility of Toeplitz operator $T_\varphi$ ($\varphi\geqslant 0$) on $L_h^2(\omega_\alpha)$  in terms of the reverse Carleson measure and Berezin transform.

\begin{thm}\label{invertibility1} Let $\varphi\geqslant 0$ be in $L^\infty(\omega_\alpha)$. The following conditions are equivalent:\\
$(1)$ The Toeplitz operator $T_\varphi$ is invertible on $L_h^2(\omega_\alpha)$;\\
$(2)$ The Berezin transform $\widetilde{\varphi}$ is invertible in $L^\infty(\omega_\alpha)$,  where
$$\widetilde{\varphi}(z):=\frac{1}{\|R^\alpha_z\|^2_{L^2(\omega_\alpha)}}\int_{\mathbb D}\varphi(\lambda)|R_\alpha(z, \lambda)|^2 \omega_\alpha(\lambda) dA(\lambda)$$
and
$$\|R^\alpha_z\|^2_{L^2(\omega_\alpha)}=R^\alpha_z(z)=\frac{2}{(1-|z|^2)^{2+\alpha}}-1;$$\\
$(3)$ There exists $r>0$ such that  $$G:=\{z \in \mathbb D: \varphi(z)>r \},$$ $\chi_{G}dA$ is a reverse Carleson measure for $L_h^2( \omega_\alpha)$;\\
$(4)$ There exists a constant $C>0$ such that
$$\int_ {\mathbb D}|f(z)|^2 \varphi(z)\omega_\alpha(z) dA(z)\geqslant C \int_{\mathbb D}|f(z)|^2\omega_\alpha(z) dA(z)$$
for $f\in L_h^2(\omega_\alpha)$.
\end{thm}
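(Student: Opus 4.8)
The plan is to establish the cycle $(1)\Rightarrow(2)\Rightarrow(3)\Rightarrow(4)\Rightarrow(1)$, in which every arrow except $(2)\Rightarrow(3)$ is soft. The organizing observation is that $T_\varphi$ is a bounded positive operator whose quadratic form is simply
\[
\langle T_\varphi f,f\rangle_{L^2(\omega_\alpha)}=\int_{\mathbb D}\varphi(z)\,|f(z)|^2\,\omega_\alpha(z)\,dA(z),\qquad f\in L_h^2(\omega_\alpha),
\]
which follows from the identity $\langle T_\nu f,g\rangle_{L^2(\omega)}=\langle f,g\rangle_{L^2(\nu)}$ in the proof of Theorem \ref{Boundedness1} applied to $d\nu=\varphi\,\omega_\alpha\,dA$. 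Since a bounded positive operator is invertible exactly when its form is coercive, this identity makes $(4)$ a restatement of $(1)$, giving $(4)\Rightarrow(1)$ (the upper bound being automatic from $\varphi\in L^\infty$). For $(1)\Rightarrow(2)$ I would feed $f=R_z^\alpha$ into the coercivity estimate and divide by $\|R_z^\alpha\|_{L^2(\omega_\alpha)}^2$, which yields $\widetilde\varphi(z)\geq C$; as $\widetilde\varphi\leq\|\varphi\|_\infty$ always, $\widetilde\varphi$ is then bounded above and below, hence invertible in $L^\infty(\omega_\alpha)$. For $(3)\Rightarrow(4)$ I would restrict the integral of $\varphi|f|^2\omega_\alpha$ to $G$, use $\varphi>r$ there, and apply the reverse Carleson inequality from Lemma \ref{reverse Carleson measure1}.

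The real content is $(2)\Rightarrow(3)$, and my strategy is to push the hypothesis down to the analytic Bergman space and invoke \cite{Zhao}. Let $K_z^\alpha$ denote the Bergman kernel and $\widetilde\varphi_a$ the Bergman Berezin transform. Starting from $R_z^\alpha=2\,\mathrm{Re}\,K_z^\alpha-1$ one gets the crude pointwise bound $|R_z^\alpha|^2\leq 8|K_z^\alpha|^2+2$; integrating this against $\varphi\,\omega_\alpha\,dA$ and dividing by $R_z^\alpha(z)=2K_z^\alpha(z)-1$ produces
\[
\widetilde\varphi(z)\;\leq\; C\,\widetilde\varphi_a(z)+\frac{2\|\varphi\|_\infty}{2K_z^\alpha(z)-1},
\]
where the last term tends to $0$ as $|z|\to 1^-$ because $K_z^\alpha(z)=(1-|z|^2)^{-(2+\alpha)}\to\infty$. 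Hence the hypothesis $\widetilde\varphi\geq\delta>0$ forces $\widetilde\varphi_a(z)\geq\delta/(2C)$ for all $z$ sufficiently close to $\partial\mathbb D$.

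On the remaining compact set $\{|z|\leq R\}$ I would instead use positivity and continuity: since $\widetilde\varphi\not\equiv 0$ the symbol $\varphi\geq 0$ is not null, so $\int_{\mathbb D}\varphi|K_z^\alpha|^2\omega_\alpha\,dA>0$ for every $z$, whence $\widetilde\varphi_a>0$ pointwise, and continuity of $z\mapsto\widetilde\varphi_a(z)$ then bounds it below on the compactum. Combining the two regimes shows $\widetilde\varphi_a$ is invertible in $L^\infty(\mathbb D)$. At this point \cite{Zhao} applies: $\widetilde\varphi_a$ bounded below is equivalent to invertibility of $T_\varphi$ on $L_a^2$, which in turn is equivalent to the existence of $r>0$ for which $G=\{\varphi>r\}$ satisfies the metric density condition $(i)$ of Lemma \ref{reverse Carleson measure1}. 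That condition is insensitive to the analytic/harmonic distinction, so reading Lemma \ref{reverse Carleson measure1} in its harmonic direction converts it into the statement that $\chi_G\,dA$ is a reverse Carleson measure for $L_h^2(\omega_\alpha)$, which is $(3)$. I expect the delicate point to be exactly the boundary-versus-compact split in the Berezin comparison: the lower-order term is only negligible near $\partial\mathbb D$, so away from the boundary one cannot compare $\widetilde\varphi$ and $\widetilde\varphi_a$ directly and must fall back on the qualitative positivity of $\widetilde\varphi_a$.
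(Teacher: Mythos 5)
Your soft arrows are correct and essentially coincide with the paper's: $(1)\Rightarrow(2)$ by testing coercivity on $R_z^\alpha$; $(3)\Rightarrow(4)$ by restricting to $G$ and applying Lemma \ref{reverse Carleson measure1}; and $(4)\Rightarrow(1)$ via self-adjointness and coercivity (the paper instead normalizes $0\leqslant\varphi\leqslant 1$ and shows $\|I-T_\varphi\|<1$ as in Corollary 3 of \cite{Lue}, which amounts to the same thing). Your kernel comparison is also sound: $|R_z^\alpha|^2\leqslant 8|K_z^\alpha|^2+2$ gives $\widetilde\varphi(z)\leqslant 8\,\widetilde\varphi_a(z)+2\|\varphi\|_\infty/(2K_z^\alpha(z)-1)$, and the boundary-versus-compactum split (using that $\widetilde\varphi_a$ is continuous and strictly positive once $\varphi$ is not a.e.\ null) does yield $\widetilde\varphi_a$ bounded below on all of $\mathbb D$.

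The genuine gap is the final black-box step in $(2)\Rightarrow(3)$. The reference \cite{Zhao} treats only the unweighted Bergman space $L_a^2$, i.e.\ $\alpha=0$. For general $\alpha>-1$, the statement you invoke --- that $\widetilde\varphi_a$ bounded below forces the density condition $|G\cap K|\geqslant\delta'|\mathbb D\cap K|$ for $G=\{\varphi>r\}$ --- is precisely Theorem \ref{invertibility2} of this paper, which the authors prove \emph{after} Theorem \ref{invertibility1} and by the very same mechanism: Lemma \ref{D-K} run with the analytic test function $g=\sqrt{\alpha+1}\,K^\alpha_{z_0}(\lambda)(1-|z_0|^2)^{\frac{2+\alpha}{2}}$ in place of $f_s$. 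So as written your argument either covers only $\alpha=0$ or is circular within the paper's logical order. The content you cannot route around is the concentration estimate of Lemma \ref{D-K}: setting $G=\{\varphi>\delta/4\}$ and choosing $z_0=(1-st)u$ adapted to a boundary ball $K$, the two inequalities of Lemma \ref{D-K} trap $\frac{\delta}{4}\leqslant L_{z_0}\leqslant C\big(|G\cap K|/|\mathbb D\cap K|\big)^\beta$, which is the density condition; the paper runs this directly with the harmonic kernel $R^\alpha_{z_0}$, so no detour through the analytic space is needed. If you verify the analytic Lemma \ref{D-K} estimates yourself, your reduction becomes a legitimate alternative (it is in effect Corollary \ref{invertibility3} read in the opposite direction); as it stands, the appeal to \cite{Zhao} leaves every $\alpha\neq 0$ unproven. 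A minor point: the form identity $\langle T_\varphi f,f\rangle_{L^2(\omega_\alpha)}=\int_{\mathbb D}\varphi|f|^2\omega_\alpha\,dA$ for bounded $\varphi$ follows directly from $T_\varphi=PM_\varphi$ with $P$ the orthogonal projection; leaning on the identity from the proof of Theorem \ref{Boundedness1} is unnecessary, and that theorem carries an $\mathcal{A}_2$ hypothesis which $\omega_\alpha$ satisfies only for $-1<\alpha<1$.
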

Before giving the proof the main theorem of this section, we need another lemma, which was proved in \cite{Luecking1983}, \cite{Miao1998} and \cite{Sledd}.
\begin{lemma}\label{D-K}
Suppose that  the ball $K$ has radius $0<t<1$  and center $u=(1, 0)\in \mathbb R^2$.
Let $f$ be the harmonic  function
 $$f(\lambda)=f_s(\lambda):=\sqrt{1+\alpha}R^\alpha_{z_0}(\lambda)(1-|z_0|^2)^{\frac{2+\alpha}{2}},$$
where $z_0=(1-st)u$, $0<s<1$.
Then for each $\epsilon>0$, there exist  $s=s(\epsilon)$  and a positive constant $C=C(\epsilon)$ (independent of $K$) such that\\
$$\int_{B\setminus K} |f(\lambda)|^2 (1-|\lambda|)^\alpha dA(\lambda)<\epsilon$$
and
$$\int_{G\cap K}|f(\lambda)|^2 (1-|\lambda|)^\alpha dA(\lambda)\leqslant C \bigg(\frac{|G\cap K|}{|\mathbb D \cap K|}\bigg)^\beta,$$
where
$$\beta=\begin{cases}
1,& \mathrm{if}\  0\leqslant \alpha<\infty,\\
1-\frac{1}{\gamma}, & \mathrm{if} \ -1<\alpha<0,
\end{cases}$$
$\gamma$ is a number in $(1, -\frac{1}{\alpha})$ if $-1<\alpha<0$.
\end{lemma}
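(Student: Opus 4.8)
The two assertions say, respectively, that the $L^2((1-|\lambda|^2)^\alpha dA)$-mass of the normalized test function $f$ concentrates inside $K$, and that the portion of this mass carried by a subset $G\cap K$ is controlled by the relative size $|G\cap K|/|\mathbb{D}\cap K|$ (I work with $(1-|\lambda|^2)^\alpha$, which is comparable to $(1-|\lambda|)^\alpha$). The plan is to exploit that $f$ is essentially the normalized reproducing kernel at $z_0=(1-st,0)$, whose depth is $1-|z_0|=st$, and to reduce everything to scale-invariant quantities via the M\"{o}bius change of variables $\lambda=\varphi_{z_0}(w)$ with $\varphi_{z_0}(w)=\frac{z_0-w}{1-z_0 w}$, an involution since $z_0$ is real. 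First I would record the transformation identity. Using $1-|\lambda|^2=\frac{(1-|z_0|^2)(1-|w|^2)}{|1-z_0w|^2}$, $dA(\lambda)=\frac{(1-|z_0|^2)^2}{|1-z_0w|^4}dA(w)$, and the elementary computation $K^\alpha_{z_0}(\varphi_{z_0}(w))=\frac{(1-z_0w)^{2+\alpha}}{(1-z_0^2)^{2+\alpha}}$, one finds
$$|f(\lambda)|^2(1-|\lambda|^2)^\alpha dA(\lambda)=g_{z_0}(w)(1-|w|^2)^\alpha dA(w),\quad g_{z_0}(w)=(1+\alpha)\frac{\big|(1-z_0w)^{2+\alpha}+(1-z_0\overline w)^{2+\alpha}-(1-z_0^2)^{2+\alpha}\big|^2}{|1-z_0 w|^{4+2\alpha}}.$$
The crucial observation is that $g_{z_0}\leqslant C$ uniformly in $z_0\in(0,1)$ and $w\in\mathbb{D}$: since $z_0$ is real, $|1-z_0\overline w|=|1-z_0 w|$, and $1-z_0^2\leqslant 2(1-z_0)\leqslant 2|1-z_0 w|$, so each term in the numerator is $\lesssim |1-z_0w|^{2+\alpha}$.

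For the first inequality the plan is to establish the geometric containment $D(z_0,\rho(s))\subseteq K$ with $\rho(s)=\frac{1-s}{1+s}\to 1$ as $s\to 0$, uniformly in $t$. Writing $D(z_0,\rho)$ as the Euclidean disk with center and radius recalled in the proof of Lemma \ref{two disks}, the condition $D(z_0,\rho)\subseteq B(u,t)$ reduces (using that $u=1$ lies to the right of the disk) to $\frac{(1-z_0)[(1+\rho^2z_0)+(1+z_0)\rho]}{1-\rho^2z_0^2}\leqslant t$; since $1-z_0=st$, this is $\frac{s[(1+\rho^2z_0)+(1+z_0)\rho]}{1-\rho^2z_0^2}\leqslant 1$, whose left side is increasing in $z_0$ and equals $1$ at $z_0=1$ precisely when $\rho=\frac{1-s}{1+s}$. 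Granting the containment, $\mathbb{D}\setminus K\subseteq \mathbb{D}\setminus D(z_0,\rho(s))$, and $\varphi_{z_0}$ maps the latter onto $\{|w|>\rho(s)\}$; combining with the transformation identity and $g_{z_0}\leqslant C$ gives
$$\int_{\mathbb{D}\setminus K}|f(\lambda)|^2(1-|\lambda|^2)^\alpha dA(\lambda)\leqslant C\int_{|w|>\rho(s)}(1-|w|^2)^\alpha dA(w),$$
which tends to $0$ as $s\to 0$ because $(1-|w|^2)^\alpha$ is integrable. Choosing $s=s(\epsilon)$ small makes this $<\epsilon$, uniformly in $K$.

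For the second inequality I would instead combine a global pointwise bound with H\"{o}lder's inequality. Since $|1-z_0\lambda|\geqslant 1-|z_0|=st$ everywhere, Lemma \ref{subharmonic behavior of harmonic functions} type reasoning (or direct estimation of $R^\alpha_{z_0}$) gives $\sup_{\mathbb{D}}|f|^2\lesssim (st)^{-(2+\alpha)}$, and one has $|\mathbb{D}\cap K|\asymp t^2$. When $\alpha\geqslant 0$ the weight $(1-|\lambda|^2)^\alpha\leqslant(2t)^\alpha$ is bounded on $K$, so pulling it out and using the sup-bound yields $\int_{G\cap K}|f|^2(1-|\lambda|^2)^\alpha dA\lesssim s^{-(2+\alpha)}|G\cap K|/t^2$, i.e.\ the case $\beta=1$. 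When $-1<\alpha<0$ the weight is singular, so I would apply H\"{o}lder with exponents $\gamma'$ and $\gamma$, $\gamma\in(1,-1/\alpha)$, splitting off $(1-|\lambda|^2)^\alpha$: the condition $\alpha\gamma>-1$ guarantees $\int_K(1-|\lambda|^2)^{\alpha\gamma}dA\lesssim t^{2+\alpha\gamma}$, and together with $\int_{G\cap K}|f|^{2\gamma'}dA\lesssim |G\cap K|(st)^{-(2+\alpha)\gamma'}$ the powers of $t$ combine to $-(2+\alpha)+\frac{2+\alpha\gamma}{\gamma}=-\frac{2}{\gamma'}$, producing $s^{-(2+\alpha)}\big(|G\cap K|/t^2\big)^{1/\gamma'}$; since $1/\gamma'=1-1/\gamma=\beta$ and $t^2\asymp|\mathbb{D}\cap K|$ this is exactly the claim, with $C(\epsilon)\asymp s(\epsilon)^{-(2+\alpha)}$.

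The main obstacle is maintaining uniformity in the ball $K$ (independence of $t$) throughout. This is handled cleanly in the first inequality by the scale-invariance built into the M\"{o}bius change of variables, the decisive point being the uniform bound $g_{z_0}\leqslant C$; and in the second inequality by careful bookkeeping of the powers of $t$ and $s$, in particular verifying that the $t$-exponents collapse to the scale-invariant ratio $|G\cap K|/|\mathbb{D}\cap K|$ raised to the stated exponent $\beta$. The other delicate point is the geometric containment $D(z_0,\rho(s))\subseteq K$ with $\rho(s)\to 1$, where the monotonicity in $z_0$ must be checked so that the single choice $\rho(s)=\frac{1-s}{1+s}$ works simultaneously for every $t\in(0,1)$.
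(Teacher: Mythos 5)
Your proof is correct, and it is genuinely different from what the paper does: the paper gives no argument for this lemma at all, deferring entirely to \cite{Luecking1983}, \cite{Miao1998} and \cite{Sledd}, where such concentration estimates are obtained by direct Forelli--Rudin-type integral estimates on the kernel. Your first half is self-contained and arguably cleaner. The substitution $\lambda=\varphi_{z_0}(w)$ does convert the weighted mass of $f$ into $g_{z_0}(w)(1-|w|^2)^\alpha\,dA(w)$ with the stated $g_{z_0}$ (the powers of $1-z_0^2$ cancel exactly: $(2+\alpha)-2(2+\alpha)+\alpha+2=0$), and your uniform bound $g_{z_0}\leqslant C(\alpha)$ is valid since $z_0$ is real, so $|1-z_0\bar w|=|1-z_0w|$ and $1-z_0^2\leqslant 2(1-z_0)\leqslant 2|1-z_0w|$, with $|\zeta^{2+\alpha}|=|\zeta|^{2+\alpha}$ for the principal branch (harmless, as $\mathrm{Re}(1-z_0w)>0$). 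The geometric containment also checks out: $\sup_{\lambda\in D(z_0,\rho)}|1-\lambda|=(1-c)+R=\frac{(1-z_0)[(1+\rho^2z_0)+(1+z_0)\rho]}{1-\rho^2z_0^2}$, which with $1-z_0=st$ is increasing in $z_0$ (numerator increasing, positive denominator decreasing) with limit $s\frac{1+\rho}{1-\rho}$ at $z_0=1$, so $\rho(s)=\frac{1-s}{1+s}$ gives $D(z_0,\rho(s))\subseteq K$ simultaneously for all $t\in(0,1)$; this reduces the tail to the $t$-free quantity $\int_{|w|>\rho(s)}(1-|w|^2)^\alpha\,dA(w)\to 0$, settling uniformity in $K$ at a stroke (reading the statement's ``$B$'' as $\mathbb{D}$, which is clearly the intent). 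Your second half (the sup bound $\sup_{\mathbb D}|f|^2\lesssim (st)^{-(2+\alpha)}$ from $|1-z_0\lambda|\geqslant st$, the estimate $|\mathbb D\cap K|\asymp t^2$, and H\"{o}lder with $\gamma\in(1,-1/\alpha)$ so that $\alpha\gamma>-1$ and $\int_{K\cap\mathbb D}(1-|\lambda|)^{\alpha\gamma}dA\lesssim t^{2+\alpha\gamma}$) is essentially the argument of the cited sources, and your exponent bookkeeping is right: $-(2+\alpha)+\frac{2+\alpha\gamma}{\gamma}=-\frac{2}{\gamma'}$, yielding $\beta=1/\gamma'=1-1/\gamma$ and $C(\epsilon)\asymp s(\epsilon)^{-(2+\alpha)}$, with the two inequalities holding for the single choice $s=s(\epsilon)$ since the second holds for every $s$. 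What the citation buys the paper is brevity and access to the $L^p$ and higher-dimensional versions proved there; what your argument buys is a transparent verification tailored exactly to the weighted planar case, with an explicit $\rho(s)$, at the modest cost of the routine comparability $(1-|\lambda|)^\alpha\asymp(1-|\lambda|^2)^\alpha$ that you correctly flag at the outset.
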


Now we are ready to prove  Theorem \ref{invertibility1}.

\begin{proof}  [Proof of Theorem \ref{invertibility1}]We will show that $(1)\Rightarrow (2)\Rightarrow (3)\Rightarrow (4)\Rightarrow (1)$. Without
loss of generality, we may assume that $0\leqslant \varphi \leqslant 1$.

$(1)\Rightarrow (2)$: This is trivial.

$(2)\Rightarrow (3)$: Suppose that $\widetilde{\varphi}$ is bounded below  by some positive constant $\delta$. By Lemma  \ref{reverse Carleson measure1}, it sufficient to show that
there exists a  $\delta'\in (0, 1)$ such that
$$ |G\cap K| \geqslant \delta' |\mathbb D \cap K|$$
for all  balls $K$ whose centers lie on $\partial \mathbb D$.

Since $\omega_{\alpha}dA$ is a rotation invariant measure, it is no loss of generality to assume that $K$ has its center at the point $(1, 0)$. It is also clear that we need only to prove the inequality for sufficient small radius $t$, say $t<1$.

Now we consider the subset $G=\{\lambda \in \mathbb D: \varphi(\lambda)>\frac{\delta}{4}\}$.  For each $z \in \mathbb D$,
\begin{align*}
\delta &\leqslant \widetilde{\varphi}(z)=\frac{1}{\|R^\alpha_z\|^2_{L^2(\omega_\alpha)}}\int_{\mathbb D}\varphi(\lambda)|R^\alpha_z (\lambda)|^2\omega_\alpha(\lambda) dA(\lambda)\\
&=\frac{1}{\|R^\alpha_z\|^2_{L^2(\omega_\alpha)}}\bigg(\int_{G}+\int_{\mathbb D \setminus G} \bigg) \varphi(\lambda)|R^\alpha_z( \lambda)|^2\omega_\alpha(\lambda) dA(\lambda)\\
&\leqslant (1-|z|^2)^{2+\alpha}\int_{G} \varphi(\lambda)|R^\alpha_z(\lambda)|^2\omega_\alpha(\lambda) dA(\lambda) +\frac{\delta}{4}.
\end{align*}
Let $L_z$ be the following integral:
\begin{align*}
&L_z=\frac{1}{\|R^\alpha_z\|^2_{L^2(\omega_\alpha)}}\int_{G\cap K} \varphi(\lambda)|R^\alpha_z(\lambda)|^2\omega_\alpha(\lambda)  dA(\lambda).
\end{align*}
Then for each $z\in \mathbb D$, we have
\begin{align*}
L_z&=\frac{1}{\|R^\alpha_z\|^2_{L^2(\omega_\alpha)}}\bigg(\int_{G}-\int_{G \backslash K}\bigg)\varphi(\lambda)|R^\alpha_z( \lambda)|^2\omega_\alpha(\lambda) dA(\lambda)\\
&\geqslant \frac{1}{2}(1-|z|^2)^{2+\alpha}\bigg(\int_{G}-\int_{G \backslash K}\bigg)\varphi(\lambda)|R^\alpha_z(\lambda)|^2\omega_\alpha(\lambda) dA(\lambda)\\
& \geqslant \frac{3\delta}{8}-\frac{1}{2}(1-|z|^2)^{2+\alpha} \int_{G \backslash K}\varphi(\lambda)|R^\alpha_z(\lambda)|^2\omega_\alpha(\lambda)  dA(\lambda)\\
&\geqslant \frac{3\delta}{8}-\frac{1}{2}(1-|z|^2)^{2+\alpha} \int_{G \backslash K}|R^\alpha_z(\lambda)|^2\omega_\alpha(\lambda)  dA(\lambda)\ \ \  \big(\mathrm{using}\ 0\leqslant \varphi \leqslant 1\big).
\end{align*}

For the $\delta$ above, Lemma \ref{D-K} guarantees that we can select  $z_0\in \mathbb D$ to define a function $f$ (as the one in Lemma \ref{D-K}) satisfies the following three inequalities:
\begin{align*}
L_{z_0}& \geqslant \frac{3\delta}{8}-\frac{1}{2}(1-|z_0|^2)^{2+\alpha} \int_{G \backslash K}|R^\alpha_{z_0}(\lambda)|^2\omega_\alpha(\lambda)  dA(\lambda)\\
&=\frac{3\delta}{8}-\frac{1}{2}\int_{G \backslash K} |f(\lambda)|^2(1-|\lambda|^2)^\alpha dA(\lambda),
\end{align*}
$$\int_{G \backslash K} |f(\lambda)|^2(1-|\lambda|^2)^\alpha dA(\lambda)<\frac{\delta}{4}$$
and
$$\int_{G\cap K}|f(\lambda)|^2 (1-|\lambda|)^\alpha dA(\lambda)\leqslant C \bigg(\frac{|G\cap K|}{|\mathbb D \cap K|}\bigg)^\beta,$$
where the constant $C$ depends only on $\delta$ and $\alpha$.  Therefore,
\begin{align*}
\frac{\delta}{4} &\leqslant L_{z_0} \leqslant (1-|z_0|^2)^{2+\alpha}\int_{K\cap G}|R^\alpha_{z_0}(\lambda)|^2\omega_\alpha(\lambda) dA(\lambda)\\
&=\int_{G\cap K}|f(\lambda)|^2 (1-|\lambda|^2)^\alpha dA(\lambda)\\
&\leqslant C \bigg(\frac{|G\cap K|}{|\mathbb D \cap K|}\bigg)^\beta.
\end{align*}
Now we get $(3)$ by Lemma \ref{reverse Carleson measure1}.

$(3)\Rightarrow (4)$: Observer that
\begin{align*}
\int_{\mathbb D} |f(z)|^2  \varphi(z)  \omega_{\alpha}(z) dA(z)&\geqslant \int_{G}\varphi(z) |f(z)|^2 \omega_{\alpha}(z) dA(z)\\
&> r\int_{G} |f(z)|^2 \omega_{\alpha}(z) dA(z)\\
&\geqslant \frac{r}{C} \int_{\mathbb D} |f(z)|^2 \omega_{\alpha} (z) dA(z),
\end{align*}
the last inequality follows from the definition of the reverse Carleson measure.

$(4)\Rightarrow (1)$: Use the same arguments as the proof of Corollary 3 in \cite{Lue}, we obtain that
$\|I-T_\varphi\|<1$, which implies that $T_\varphi$ is invertible on $L_h^2(\omega_{\alpha})$.  This completes the whole proof of the theorem.
\end{proof}

Let $\mathbb T_\varphi$  denote the Toeplitz operator with symbol $\varphi$ on the weighted Bergman space $L_a^2(\omega_\alpha)$.
 Combining the main result in \cite{Lue} and the techniques used in the proof of Theorem \ref{invertibility1}, we can generalize Theorem 3.2 in \cite{Zhao} to the case of standard weighted Bergman space.

\begin{thm}\label{invertibility2}
 Let $\varphi\geqslant 0$ be in $L^\infty(\omega_\alpha)$. Then the following  are equivalent:\\
$(1)$ The Toeplitz operator $\mathbb T_\varphi$  is invertible on $L_a^2(\omega_\alpha)$;\\
$(2)$ The Berezin transform $\widehat{\varphi}$ is invertible in $L^\infty(\omega_\alpha)$, where
$$\widehat{\varphi}(z):=\frac{1}{\|K^\alpha_z\|^2_{L^2(\omega_\alpha)}}\int_{\mathbb D}\varphi(\lambda)|K^\alpha_z( \lambda)|^2\omega_\alpha(\lambda)dA(\lambda);$$
$(3)$ There exists $r>0$ such that  $$G:=\{z \in \mathbb D: \varphi(z)>r \},$$ $\chi_{G}dA$ is a reverse Carleson measure for $L_a^2(\omega_\alpha)$;\\
$(4)$ There exists a  $\delta\in (0, 1)$ such that
$$ |G\cap K| \geqslant \delta |\mathbb D \cap K|$$
for every ball $K$ whose center lies on $\partial \mathbb D$;\\
$(5)$ There exists a constant $C>0$ such that
$$\int_ {\mathbb D}|f(z)|^2 \varphi(z) \omega_\alpha(z) dA(z)\geqslant C \int_{\mathbb D}|f(z)|^2\omega_\alpha(z) dA(z)$$
for $f\in L_a^2(\omega_\alpha)$.
\end{thm}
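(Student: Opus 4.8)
The plan is to run the cycle $(1)\Rightarrow(2)\Rightarrow(4)\Rightarrow(3)\Rightarrow(5)\Rightarrow(1)$, following the template of Theorem \ref{invertibility1} almost verbatim but with the harmonic kernel $R^\alpha_z$ replaced by the analytic kernel $K^\alpha_z$ (so that $\|K^\alpha_z\|^2_{L^2(\omega_\alpha)}=(1-|z|^2)^{-(2+\alpha)}$, with no subtracted constant) and the harmonic reverse Carleson criterion of Lemma \ref{reverse Carleson measure1} replaced by Luecking's original theorem for $L_a^2(\omega_\alpha)$ in \cite{Lue}. The step $(1)\Rightarrow(2)$ is immediate: since $\varphi\geq0$ the operator $\mathbb T_\varphi$ is positive, so invertibility gives $\mathbb T_\varphi\geq cI$ for some $c>0$, and testing against the normalized kernel yields $\widehat\varphi(z)=\langle\mathbb T_\varphi K^\alpha_z,K^\alpha_z\rangle/\|K^\alpha_z\|^2_{L^2(\omega_\alpha)}\geq c$, while $\widehat\varphi\leq\|\varphi\|_\infty$; hence $\widehat\varphi$ is invertible in $L^\infty(\omega_\alpha)$.

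For $(2)\Rightarrow(4)$, which is the heart of the argument, I would reproduce the computation in the proof of $(2)\Rightarrow(3)$ of Theorem \ref{invertibility1}. After normalizing $0\leq\varphi\leq1$ and assuming $\widehat\varphi\geq\delta>0$, I set $G=\{\lambda:\varphi(\lambda)>\delta/4\}$, fix a ball $K$ centered on $\partial\mathbb D$ with small radius $t$, and split the integral defining $\widehat\varphi$ to isolate the mass on $G\cap K$ in a quantity $L_{z_0}$. The analytic form of Lemma \ref{D-K} (the $K^\alpha_{z_0}$ version recorded in \cite{Luecking1983,Miao1998,Sledd}) supplies, for $\epsilon=\delta/4$, a point $z_0=(1-st)u$ and a test function $f$ for which the mass of $|f|^2(1-|\lambda|)^\alpha$ outside $K$ is below $\delta/4$ while its mass on $G\cap K$ is at most $C(|G\cap K|/|\mathbb D\cap K|)^\beta$. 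Feeding these two bounds into the lower and upper estimates for $L_{z_0}$ gives $c\delta\leq C(|G\cap K|/|\mathbb D\cap K|)^\beta$, that is $|G\cap K|\geq\delta'|\mathbb D\cap K|$ uniformly in $K$, which is precisely condition $(4)$.

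The three remaining links are routine. The implication $(4)\Rightarrow(3)$ is one direction of Luecking's reverse Carleson characterization for $L_a^2(\omega_\alpha)$ from \cite{Lue}. For $(3)\Rightarrow(5)$, since $\varphi>r$ on $G$, the reverse Carleson inequality gives directly
$$\int_{\mathbb D}|f|^2\varphi\,\omega_\alpha\,dA\geq r\int_G|f|^2\omega_\alpha\,dA\geq\frac{r}{C}\int_{\mathbb D}|f|^2\omega_\alpha\,dA.$$
Finally $(5)\Rightarrow(1)$ mirrors Corollary 3 of \cite{Lue}: with $0\leq\varphi\leq1$ one has $0\leq\langle(I-\mathbb T_\varphi)f,f\rangle\leq(1-C)\|f\|^2_{L^2(\omega_\alpha)}$ by $(5)$, so the self-adjoint operator $I-\mathbb T_\varphi$ satisfies $\|I-\mathbb T_\varphi\|<1$, whence $\mathbb T_\varphi$ is invertible on $L_a^2(\omega_\alpha)$.

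The main obstacle is the single step $(2)\Rightarrow(4)$, and specifically securing the analytic counterpart of Lemma \ref{D-K} with the correct exponent $\beta$; once that test-function estimate is in hand, the rest is an exact transcription of the harmonic argument, the only genuine difference being that the exact identity $\|K^\alpha_z\|^2_{L^2(\omega_\alpha)}=(1-|z|^2)^{-(2+\alpha)}$ removes the factor-$\tfrac12$ slack present in the harmonic case. As there, the regime $-1<\alpha<0$ requires the auxiliary parameter $\gamma\in(1,-1/\alpha)$ to absorb the integrability loss in the estimate on $G\cap K$.
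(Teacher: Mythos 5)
Your proposal is correct and takes essentially the same approach as the paper: the paper's proof likewise reduces to the argument of Theorem \ref{invertibility1}, replacing the harmonic test function of Lemma \ref{D-K} by $g(\lambda)=\sqrt{\alpha+1}\,K^\alpha_{z_0}(\lambda)(1-|z_0|^2)^{\frac{2+\alpha}{2}}$ with $z_0=(1-st)u$ and invoking Luecking's analytic reverse Carleson theorem from \cite{Lue}. Your reordering of the cycle to pass explicitly through condition $(4)$, and your spelled-out positivity argument for $(1)\Rightarrow(2)$ (which the paper dismisses as trivial), are only cosmetic differences.
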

\begin{proof}
From the proof of Theorem \ref{invertibility1}, it sufficient to show that one can replace the harmonic function $f$ defined in Lemma
\ref{D-K} by a suitable analytic function $g$. Indeed, we  construct the desired function $g$ as follows. Suppose that $K$ has radius $0<t<1$  and center $u=(1, 0)\in \mathbb R^2$. Define
 $$g(\lambda)=\sqrt{\alpha+1}K^\alpha_{z_0}(\lambda)(1-|z_0|^2)^{\frac{2+\alpha}{2}},$$
where $z_0=(1-st)u$, $0<s<1$. Then it is not hard to check that the two inequalities in Lemma
\ref{D-K} both hold for $g$. Now the rest of the  proof parallels exactly one given in Theorem \ref{invertibility1}.
\end{proof}

Based on the theorem above, we can establish a relationship of the invertibility  between  Toeplitz operators with nonnegative symbols on $L_a^2(\omega_\alpha)$ and $L_h^2(\omega_\alpha)$.
\begin{cor}\label{invertibility3}
Let $\varphi$ be a nonnegative bounded measurable function on $\mathbb D$. The following four conditions are equivalent:\\
$(1)$  $\mathbb T_\varphi$  is invertible on $L_a^2(\omega_\alpha)$;\\
$(2)$  $T_\varphi$  is invertible on $L_h^2(\omega_\alpha)$;\\
$(3)$  $\widehat{\varphi}$ is invertible in $L^\infty(\omega_\alpha)$;\\
$(4)$  $\widetilde{\varphi}$ is invertible in $L^\infty(\omega_\alpha)$.
\end{cor}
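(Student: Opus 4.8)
The plan is to recognize that essentially all of the analytic work has already been carried out, and that the corollary follows by matching the two chains of equivalences in Theorems \ref{invertibility1} and \ref{invertibility2} at their common geometric pivot, with Lemma \ref{reverse Carleson measure1} serving as the bridge. Concretely, for $r>0$ write $G_r=\{z\in\mathbb D:\varphi(z)>r\}$, and let $(\ast_r)$ denote the geometric statement that there is a $\delta\in(0,1)$ with $|G_r\cap K|\geqslant\delta\,|\mathbb D\cap K|$ for every ball $K$ centered on $\partial\mathbb D$. The whole argument then reduces to showing that each of the four listed conditions is equivalent to the assertion ``$(\ast_r)$ holds for some $r>0$.''

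First I would dispose of the analytic side. Conditions $(1)$ and $(3)$ of the corollary are exactly conditions $(1)$ and $(2)$ of Theorem \ref{invertibility2}, and that theorem already lists the geometric statement as its condition $(4)$. Hence, directly from Theorem \ref{invertibility2}, both $(1)$ and $(3)$ are equivalent to the existence of an $r>0$ for which $(\ast_r)$ holds; no further estimate is needed on this side.

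Next I would treat the harmonic side. Conditions $(2)$ and $(4)$ of the corollary are conditions $(1)$ and $(2)$ of Theorem \ref{invertibility1}, which that theorem shows to be equivalent to its condition $(3)$: the existence of an $r>0$ for which $\chi_{G_r}\,dA$ is a reverse Carleson measure for $L_h^2(\omega_\alpha)$. For a fixed measurable set $G_r$, Lemma \ref{reverse Carleson measure1} identifies this reverse Carleson property with the purely geometric condition $(\ast_r)$. Therefore $(2)$ and $(4)$ are likewise equivalent to ``$(\ast_r)$ for some $r>0$,'' and combining the two sides yields that all of $(1)$--$(4)$ are equivalent to this single assertion.

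The one place that deserves a word of care, and which I expect to be the only genuine subtlety, is the shared existential quantifier on $r$: a priori the $r$ produced by the analytic chain need not coincide with the $r$ required by the harmonic chain. This dissolves because $G_r$ depends only on $\varphi$ and the pivot $(\ast_r)$ is a measure-theoretic statement that does not distinguish whether one tests against analytic or harmonic functions; thus the same threshold $r$ and the same superlevel set $G_r$ serve both chains simultaneously. I would therefore phrase the conclusion through the single intermediate condition ``$(\ast_r)$ for some $r>0$'' rather than threading each implication separately, so that no mismatch of constants can arise. I do not anticipate needing any new computation beyond invoking Lemma \ref{reverse Carleson measure1}, Theorem \ref{invertibility1}, and Theorem \ref{invertibility2}.
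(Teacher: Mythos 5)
Your proposal is correct and matches the route the paper intends: the corollary is stated without an explicit proof precisely because it follows by splicing Theorem \ref{invertibility1} and Theorem \ref{invertibility2} together at the geometric condition on the superlevel set $G_r$, with Lemma \ref{reverse Carleson measure1} converting the harmonic reverse Carleson property into that geometric statement for each fixed $r$. Your handling of the existential quantifier on $r$ --- routing every condition through the single pivot ``$(\ast_r)$ holds for some $r>0$'' --- is exactly the right way to make the implicit argument airtight.
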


 To end this section, we study Toeplitz operators with bounded analytic symbols on $L_h^2(\omega_\alpha)$. Unlike the  Bergman space  $L_a^2(\omega_\alpha)$ setting, the spectrum of analytic Toeplitz operator $T_\varphi$ on $L_h^2(\omega_\alpha)$ is not
equal to the closure of $\varphi(\mathbb D)$.  Let $H^\infty$ denote space of  analytic functions in $L^\infty(\omega_\alpha)$, we have the following result.
\begin{thm}\label{Analytic}
Suppose that $\varphi \in H^\infty$. If $\varphi$ is invertible, then the Toeplitz operator $T_\varphi$
is invertible on $L_h^2(\omega_\alpha)$. However, the converse is not true.
\end{thm}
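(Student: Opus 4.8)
The plan is to split $L_h^2(\omega_\alpha)$ into analytic and anti-analytic parts and read off the matrix of $T_\varphi$ in this splitting. Write $A=L_a^2(\omega_\alpha)$ and $\overline{A}_0=\{\bar h: h\in A,\ h(0)=0\}$, so that $L_h^2(\omega_\alpha)=A\oplus\overline{A}_0$; this is an \emph{orthogonal} decomposition because $\langle z^m,\bar z^n\rangle_{L^2(\omega_\alpha)}=0$ unless $m=n=0$ (the angular integral vanishes). Let $P$ denote the orthogonal projection of $L^2(\omega_\alpha)$ onto $L_h^2(\omega_\alpha)$, so that $T_\varphi f=P(\varphi f)$. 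For $\varphi\in H^\infty$ and $g\in A$ the product $\varphi g$ is again analytic and square-integrable, hence $T_\varphi g=P(\varphi g)=\varphi g\in A$. Thus $T_\varphi$ is block upper triangular,
$$T_\varphi=\begin{pmatrix}M_\varphi & X\\ 0 & B\end{pmatrix},$$
where $M_\varphi$ is multiplication by $\varphi$ on $A$ and $B=P_{\overline{A}_0}T_\varphi|_{\overline{A}_0}$.

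For the forward implication I would prove that, when $\varphi$ is invertible in $H^\infty$ (equivalently $\inf_{\mathbb D}|\varphi|>0$), both diagonal blocks are invertible, whence the upper-triangular $T_\varphi$ is invertible. The block $M_\varphi$ is multiplication by a bounded analytic function on the Bergman space, whose spectrum is $\overline{\varphi(\mathbb D)}$; it is invertible exactly because $0\notin\overline{\varphi(\mathbb D)}$, with inverse $M_{1/\varphi}$. For $B$ the clean step is to pass to the adjoint: since $T_\varphi^*=T_{\bar\varphi}$ and $P_{\overline{A}_0}P=P_{\overline{A}_0}$, for $\bar h\in\overline{A}_0$ one gets $B^*\bar h=P_{\overline{A}_0}(\bar\varphi\bar h)=\overline{\varphi h}$, the last equality holding because $\varphi h$ is analytic with $(\varphi h)(0)=\varphi(0)h(0)=0$, so $\overline{\varphi h}$ already lies in $\overline{A}_0$. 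Hence $B^*$ is conjugate (via $h\mapsto\bar h$) to multiplication by $\varphi$ on $A_0=\{h\in A: h(0)=0\}$, which is invertible precisely when $\varphi$ is bounded below, its inverse being multiplication by $1/\varphi$ (which preserves vanishing at $0$).

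To show the converse fails I would produce the explicit symbol $\varphi(z)=z$, which is not invertible in $H^\infty$ because $\varphi(0)=0$. Computing $T_z$ on the orthonormal basis $e_n=z^n/\|z^n\|$ $(n\geq0)$ and $f_m=\bar z^m/\|z^m\|$ $(m\geq1)$: analyticity gives $T_z e_n=\gamma_{n+1}e_{n+1}$, while a short projection computation gives $T_z f_m=\gamma_m f_{m-1}$ for $m\geq2$ and $T_z f_1=\gamma_1 e_0$, where $\gamma_n=\|z^n\|/\|z^{n-1}\|$ satisfies $\gamma_n^2=n/(n+\alpha+1)\in[(2+\alpha)^{-1},1)$. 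Relabelling the basis as a two-sided sequence $u_n=e_n$ $(n\geq0)$, $u_{-m}=f_m$ $(m\geq1)$ exhibits $T_z$ as a weighted bilateral shift $u_k\mapsto w_k u_{k+1}$ whose weights are among the $\gamma_j$, hence bounded above and bounded away from $0$; such a shift is invertible (its inverse is the backward shift with reciprocal weights). Thus $T_z$ is invertible although $z$ is not, which also matches the remark that $\sigma(T_\varphi)\neq\overline{\varphi(\mathbb D)}$ in general.

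The delicate point, and the reason the converse can fail, is that the invertibility of $T_\varphi$ does \emph{not} force $M_\varphi$ to be invertible: for $\varphi=z$ the block $M_z$ misses the constants and $B$ has a one-dimensional kernel, yet the off-diagonal coupling $X$ (which carries $\bar z$ to a nonzero constant) glues the two defective half-shifts into a single bijective bilateral shift. I expect the main bookkeeping obstacle to be tracking, in the projection formulas for $T_z f_m$, exactly which coefficient lands in the analytic constant versus the anti-analytic component, and verifying that the resulting two-sided weight sequence is bounded above and below. For the forward direction the only abstract input needed is the standard fact that an upper-triangular operator matrix with invertible diagonal blocks is invertible.
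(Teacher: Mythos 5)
Your proof is correct, and while it reaches the same two destinations as the paper --- block upper-triangularity of $T_\varphi$ for analytic symbols, and $\varphi(z)=z$ as the counterexample --- the implementation differs in ways worth noting. For the forward direction the paper conjugates by a unitary $W$ built from $Uf(z)=f(\overline z)$ and invokes its matrix-representation lemma expressing $WT_\varphi W^*$ through Bergman Toeplitz and Hankel operators plus rank-one terms, then checks that $\mathbb H_{\varphi^\star}$ and $1\otimes\overline\varphi$ vanish on $zL_a^2(\omega_\alpha)$, leaving the invertible diagonal blocks $\mathbb T_\varphi$ and $\mathbb T_{\varphi^\star}$. You instead work directly in $L_h^2(\omega_\alpha)=A\oplus\overline A_0$, placing the constants in the analytic summand so that triangularity is immediate from $\varphi A\subseteq A$, and you identify the lower-right block via its adjoint and the conjugation $h\mapsto\overline h$; this avoids the Hankel and rank-one bookkeeping entirely, the only point deserving an explicit word being that $h\mapsto\overline h$ is a conjugate-linear isometric bijection, so invertibility of $B^*$ does transfer to and from $M_\varphi|_{A_0}$. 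For the converse the difference is more substantive: the paper argues abstractly that $\mathrm{Ker}(T_z)=\mathrm{Ker}(T_z^*)=\{0\}$ and that $T_z$ is Fredholm, hence invertible, whereas you diagonalize $T_z$ explicitly as a weighted bilateral shift. Your computations check out: since the measure is radial, $P(z\cdot z^n)=z^{n+1}$, $P(|z|^2\overline z^{\,m-1})=\gamma_m^2\,\overline z^{\,m-1}$ for $m\geqslant 2$, and $P(|z|^2)=\|z\|^2$, with $\|z^n\|^2=n!\,\Gamma(\alpha+2)/\Gamma(n+\alpha+2)$ giving $\gamma_n^2=n/(n+\alpha+1)$, which is increasing in $n$ and hence lies in $[(2+\alpha)^{-1},1)$; a bilateral weighted shift with weights bounded above and below is invertible. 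This buys a self-contained, quantitative proof that needs no Fredholm theory, at the cost of the moment computation, and your closing observation correctly isolates why the converse fails: the off-diagonal coupling $X$ (sending $\overline z$ to a nonzero constant) splices the two defective unilateral pieces into one bijective bilateral shift, which is exactly the phenomenon the paper's kernel-plus-Fredholm argument detects without exhibiting.
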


Let $\varphi \in H^\infty$  be  invertible. However, $T_\varphi T_{1/\varphi}=T_{1/\varphi}T_\varphi$ does not hold on the harmonic Bergman space in the general case (see Theorem 5 in \cite{Choe}). For this reason, we need to find some relationships between $T_\varphi$ and $\mathbb T_\varphi$ to study the invertibility problem. To do so, let us introduce some notations first.

For $\varphi \in L^\infty(\omega_\alpha)$, we define $$\varphi^\star(z)=\varphi(\overline{z}) \ \  \mathrm{and}\ \
\varphi^*(z)=\overline{\varphi(\overline{z})}.$$

Next, we define the unitary operator $W: L_h^2(\omega_\alpha)=zL_a^2(\omega_\alpha)\oplus \overline{L_a^2(\omega_\alpha)}\rightarrow zL_a^2(\omega_\alpha)\oplus L_a^2(\omega_\alpha)$ by
$$W=\left( {\begin{array}{*{20}{c}}
   {{I}} & {{O}}  \\
   {{O}} & {{U}}  \\
\end{array}} \right),$$
where $U$ is a unitary operator on $L^2(\omega_\alpha)$, defined by $Uf(z)=f(\overline{z})$.

It is clear that $W^*$ maps $zL_a^2(\omega_\alpha)\oplus L_a^2(\omega_\alpha)$ to $L_h^2(\omega_\alpha)$ and
$$W^*=\left( {\begin{array}{*{20}{c}}
   {{I}} & {{O}}  \\
   {{O}} & {{U}}  \\
\end{array}} \right).$$

For $f$ and $g$ in $L^2(\omega_\alpha)$, let $f\otimes g$ be the rank one operator defined by
$$(f\otimes g)h=\langle h, g\rangle_{L^2(\omega_\alpha)}f$$
for $h\in L^2(\omega_\alpha)$.

Let $P$ denote the orthogonal projection from $L^2(\omega_\alpha)$ to $L_a^2(\omega_\alpha)$, we define the Hankel operator with symbol $\varphi$ acting on $L_a^2(\omega_\alpha)$ by $\mathbb H_{\varphi} f=P(\varphi U f)$.
Using the notations  above,  we obtain the following matrix representation of $WT_\varphi W^*$, which  will be used to study the invertibility problem.
\begin{lemma}\label{matrix representation}
Let $\varphi \in L^\infty(\omega_\alpha)$. On the space $zL_a^2(\omega_\alpha)\oplus L_a^2(\omega_\alpha)$ we have
$$W T_\varphi W^*=\left( {\begin{array}{*{20}{c}}
   {{\mathbb{T}_\varphi-(1\otimes \overline{\varphi})}} & {{\mathbb{H}_\varphi- (1\otimes \varphi^*)}}  \\
   {{\mathbb H_{\varphi^\star}}} & {{\mathbb {T}_{\varphi^\star}}}  \\
\end{array}} \right).$$
\end{lemma}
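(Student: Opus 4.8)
The plan is to compute the four operator blocks of $WT_\varphi W^*$ directly, by applying it to a typical vector $u\oplus v$ with $u\in zL_a^2(\omega_\alpha)$ and $v\in L_a^2(\omega_\alpha)$ and reading off the two components of the output. Throughout I write $Q$ for the orthogonal projection of $L^2(\omega_\alpha)$ onto $L_h^2(\omega_\alpha)$ and $M_\varphi$ for multiplication by $\varphi$, so that $T_\varphi=QM_\varphi$ and, since $\varphi$ is bounded, every operator below is bounded. Because $W^*$ acts as the identity on the first summand and as $U$ on the second, the first move is simply $W^*(u\oplus v)=u+Uv$, an element of $L_h^2(\omega_\alpha)=zL_a^2(\omega_\alpha)\oplus\overline{L_a^2(\omega_\alpha)}$ in which $Uv\in\overline{L_a^2(\omega_\alpha)}$.

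The structural identity I would establish first expresses the harmonic projection through the analytic one. Writing $C$ for the antilinear conjugation $Cg=\overline g$ and $P_0=1\otimes 1$ for the rank-one projection onto the constants (note $\|1\|_{L^2(\omega_\alpha)}=1$ by the normalization of $\omega_\alpha$), I claim
$$Q=P+CPC-P_0.$$
This is checked on the orthogonal basis $\{z^n\}_{n\ge0}\cup\{\overline z^{\,n}\}_{n\ge1}$ of $L_h^2(\omega_\alpha)$: $P$ and $CPC$ are the orthogonal projections onto $L_a^2(\omega_\alpha)$ and $\overline{L_a^2(\omega_\alpha)}$ respectively, and $P_0$ corrects for the one-dimensional overlap at the constants. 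Feeding $f=u+Uv$ into this formula and separating the analytic part (in $zL_a^2$) from the conjugate-analytic part (in $\overline{L_a^2}$), the two appearances of the constant cancel and one obtains the clean splitting
$$T_\varphi f=(P-P_0)(\varphi f)\ \oplus\ CPC(\varphi f),$$
the first summand in $zL_a^2(\omega_\alpha)$ and the second in $\overline{L_a^2(\omega_\alpha)}$. Applying $W$ then multiplies the second summand by $U$.

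The remaining work is to identify the four pieces obtained by setting $v=0$ and then $u=0$. For this I would record the commutation rules for the symmetry operators: $UM_\varphi=M_{\varphi^\star}U$, $CM_\varphi=M_{\overline\varphi}C$, and the one genuinely useful fact that the Bergman projection commutes with the antiunitary $V:=CU=UC$, i.e. $PV=VP$ (proved by checking $\langle Vg,z^n\rangle=\overline{\langle g,z^n\rangle}$), together with $VM_{\overline\varphi}V=M_{\varphi^\star}$. With these, the $(1,1)$ and $(1,2)$ entries come from $(P-P_0)M_\varphi$ and $(P-P_0)M_\varphi U$: the $PM_\varphi$ and $PM_\varphi U$ parts are exactly $\mathbb{T}_\varphi$ and $\mathbb{H}_\varphi$, while the $P_0$ parts produce, after the change of variable $z\mapsto\overline z$, the rank-one terms $\langle\varphi u,1\rangle\,1=(1\otimes\overline\varphi)u$ and $\langle\varphi Uv,1\rangle\,1=(1\otimes\varphi^*)v$. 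The $(2,1)$ and $(2,2)$ entries come from $UCPCM_\varphi$ and $UCPCM_\varphi U$; using $UC=V$, $PV=VP$, $CM_{\overline\varphi}C=M_\varphi$ and $VM_{\overline\varphi}V=M_{\varphi^\star}$, these collapse to $PM_{\varphi^\star}U=\mathbb{H}_{\varphi^\star}$ and $PM_{\varphi^\star}=\mathbb{T}_{\varphi^\star}$, with no leftover rank-one correction, since the conjugate-analytic row carries no constant subtraction. Assembling the four entries yields the asserted matrix.

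The hard part will not be any single estimate but the careful bookkeeping of the constant function: one must pin down that $1$ belongs to the $\overline{L_a^2}$-summand, that the analytic projection $P$ secretly contains a constant component that has to be removed to land in $zL_a^2$, and that precisely these constants reorganize into the rank-one operators $1\otimes\overline\varphi$ and $1\otimes\varphi^*$. Keeping the antilinear $V$ and the two involutions $U,C$ straight, especially the sign-free identities $PV=VP$ and $VM_{\overline\varphi}V=M_{\varphi^\star}$, is where the computation must be carried out with care.
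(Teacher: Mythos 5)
Your computation is correct and is essentially the argument the paper has in mind: the paper omits the proof entirely, deferring to Theorem 2.1 of Guo--Zheng \cite{Guo}, whose proof is precisely this kind of block computation based on writing the harmonic projection as $Q=P+CPC-P_0$ and conjugating by $W$. All four entries check out as you derive them --- including the rank-one corrections produced by subtracting the constant component in the $zL_a^2(\omega_\alpha)$ summand and the identities $PV=VP$, $VM_{\overline{\varphi}}V=M_{\varphi^\star}$ --- so you have supplied exactly the details the paper leaves to the reference.
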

  \begin{proof}The proof is exactly the same as the proof of  Theorem 2.1 in \cite{Guo}, we omit the details here.
  \end{proof}

We are now ready to prove Theorem \ref{Analytic}. \\
\begin{proof} [Proof of Theorem \ref{Analytic}.] For $\varphi \in H^\infty$ and $f\in zL_a^2(\omega_\alpha)$ we have
$$\mathbb H_{\varphi^\star} f=P(\varphi^\star Uf)=P(\varphi(\overline{z})f(\overline{z}))=0$$
and $$(1\otimes \overline{\varphi}) f= \langle f, \overline{\varphi}\rangle_{L^2(\omega_\alpha)} 1=0,$$
since $f(0)=0$.

 Consequently, the matrix representation of $W T_\varphi W^*$ with $\varphi \in H^\infty$ is given by
$$W T_\varphi W^*=\left( {\begin{array}{*{20}{c}}
   {{\mathbb{T}_\varphi}} & {{\mathbb{H}_\varphi- (1\otimes \varphi^*)}}  \\
   {{O}} & {{\mathbb {T}_{\varphi^\star}}}  \\
\end{array}} \right).$$
Note that $\varphi \in H^\infty$ is invertible implies that $\varphi^\star$ is also invertible,  so that  $\mathbb T_{\varphi^\star}$ is invertible on $L_a^2(\omega_\alpha)$.

On the other hand, $\mathbb T_\varphi$ is invertible on $zL_a^2(\omega_\alpha)$ follows from  $\varphi$ is invertible. The above matrix representation tells  us that $W T_\varphi W^*$ is invertible on $zL_a^2(\omega_\alpha)\oplus L_a^2(\omega_\alpha)$. Thus $T_\varphi$ is invertible on $L_h^2(\omega_\alpha)$.

To show there exists a function $\varphi$ in $H^\infty$ such that $\varphi$ is not invertible in $L^\infty(\omega_\alpha)$ but $T_\varphi$
is invertible on $L_h^2(\omega_\alpha)$, we consider the function $\varphi(z)=z.$ It is easy to show that
$$\mathrm{Ker}(T_z)=\mathrm{Ker}(T^*_z)=\mathrm{Ker}(T_{\overline{z}})=\{0\}.$$
Observe that $T_z$ is Fredholm,  we conclude that $T_z$ is invertible on $L_h^2(\omega_\alpha)$. This finishes the proof.
\end{proof}

\section{A Reverse Carleson type Inequality for $L_h^2(\omega)$}
In the preceding section, we study the invertibility problem  of Toeplitz operators via the reverse Carleson measures for standard weighted harmonic Bergman spaces. In this section, we  establish a sufficient condition for $\chi_G dA$ to be a reverse Carleson measure for the space $L_h^2(\omega)$,  where $\omega\in \mathcal{A}_2$ and $G$ is a measurable set in $\mathbb D$.

For $a\in\dd$, $0<r<1$, recall that
\begin{equation*}
S(a,r)=\{z\in\dd:|z-a|<r(1-|a|)\}.
\end{equation*}
The main result in this section is Theorem \ref{the weight w}, which is a harmonic version of Theorem 3.9 in \cite{Luecking1985}.

\begin{thm}\label{the weight w}
Suppose that $G\subset\dd$  and  $\omega$ satisfies $\mathcal{A}_2$ condition. If there exist  $\delta\in(0,1)$ and $r\in(0,1)$ such that for all $a\in\dd$
\begin{equation*}
|G\cap S(a, r)|\geqslant \delta |S(a, r)|,
\end{equation*}
then there exists a positive constant $C=C(r, \delta)$ such that
$$ \int_{\mathbb D}|f(z)|^2\omega(z)dA(z)\leqslant C \int_{G}|f(z)|^2\omega(z) dA(z)$$
for all $f\in L^2_h(\omega)$.
\end{thm}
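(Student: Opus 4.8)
The plan is to derive the global reverse inequality from the reverse Carleson estimate for the $\epsilon$-lattice already secured in Proposition \ref{mu reverse Carleson measure}, by showing that $G$ captures a definite portion of the local $L^2(\omega)$-mass of $f$ about every lattice point. The first step is to upgrade the Lebesgue density hypothesis to a weighted one. Since $S(a,r)$ is itself a Euclidean ball, the $\mathcal{A}_2$ condition applies to it directly; writing $|G\cap S(a,r)|=\int_{G\cap S(a,r)}\omega^{1/2}\omega^{-1/2}\,dA$ and applying the Cauchy--Schwarz inequality exactly as at the end of the proof of Lemma \ref{norm of kernel} gives $|G\cap S(a,r)|^2\leqslant |G\cap S(a,r)|_\omega\,|S(a,r)|_{\omega^{-1}}$, whence $|G\cap S(a,r)|_\omega\geqslant \delta^2[\omega]_{\mathcal{A}_2}^{-1}|S(a,r)|_\omega$ for all $a\in\mathbb{D}$. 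Because $S(a,r)$ is comparable to a pseudohyperbolic disk and $\omega$ is doubling (Lemma \ref{two disks}), this yields $|G\cap S(a,r)|_\omega\geqslant c_0\,|D(a,\epsilon)|_\omega$ with $c_0=c_0(r,\delta,[\omega]_{\mathcal{A}_2})$, which is the form I will actually use.

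Fixing the $\epsilon$-lattice $\{a_n\}$ of Theorem \ref{L:key00} and Proposition \ref{mu reverse Carleson measure}, so that $\|f\|^2_{L^2(\omega)}\approx\sum_n|f(a_n)|^2|D(a_n,\epsilon)|_\omega$, it suffices to bound each term $|f(a_n)|^2|D(a_n,\epsilon)|_\omega$ by the contribution of $G$ near $a_n$. For this I would run a superlevel-set dichotomy on $S(a_n,r)$: set $\mathcal{G}_n=\{z\in G\cap S(a_n,r):|f(z)|\geqslant\tfrac12|f(a_n)|\}$ and $\mathcal{B}_n=(G\cap S(a_n,r))\setminus\mathcal{G}_n$. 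If $|\mathcal{G}_n|_\omega\geqslant\tfrac12|G\cap S(a_n,r)|_\omega$, then $|f|^2\geqslant\tfrac14|f(a_n)|^2$ on $\mathcal{G}_n$ gives $\int_{G\cap S(a_n,r)}|f|^2\omega\gtrsim |f(a_n)|^2|D(a_n,\epsilon)|_\omega$ outright. Otherwise $|\mathcal{B}_n|_\omega\geqslant\tfrac12|G\cap S(a_n,r)|_\omega$ and $|f-f(a_n)|>\tfrac12|f(a_n)|$ on $\mathcal{B}_n$, so Chebyshev's inequality produces the bound $|f(a_n)|^2|D(a_n,\epsilon)|_\omega\lesssim \int_{S(a_n,r)}|f-f(a_n)|^2\omega$. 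Summing both cases over $n$, and using that the disks $\{S(a_n,r)\}$ have bounded overlap since the centres form an $\epsilon$-separated net, the good-set terms add up to $\lesssim\int_G|f|^2\omega$, as desired.

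The hard part will be the accumulated oscillation $\sum_n\int_{S(a_n,r)}|f-f(a_n)|^2\omega$ coming from the bad case. Lemma \ref{gradient estimation} controls $|f(z)-f(a_n)|^2$ on a disk of pseudohyperbolic radius $\eta$ by $\eta^2$ times a local $L^2(\omega)$-average, so — exactly as in Lemma \ref{double intrgral estimation} — the total oscillation is $O(\eta^2)\|f\|^2_{L^2(\omega)}$ and can be absorbed into the left-hand side once the comparison radius is small. The genuine difficulty is that the density hypothesis is only supplied at the fixed macroscopic scale $r$, whereas absorption wants the comparison radius small, and these cannot be reconciled at a single scale: a harmonic function may concentrate its mass in a thin region near a boundary point of $S(a_n,r)$ that $G$ avoids while still meeting the density bound on $S(a_n,r)$. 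Here the hypothesis must be exploited at \emph{every} centre simultaneously, since such a concentration region is itself a Carleson disk $S(a',r)$ with $a'$ deeper in $\mathbb{D}$, and the density there (hence at the corresponding finer lattice points) recaptures the escaped mass. Making this multiscale bookkeeping precise — calibrating the lattice scale against the comparison radius so that the scale-$r$ gain dominates the accumulated oscillation — is the heart of the argument and the step I expect to require the most care; it is the harmonic counterpart of the subharmonic covering argument underlying Theorem~3.9 of \cite{Luecking1985}.
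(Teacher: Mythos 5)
Your opening reduction is correct and even improves on the paper in one respect: since $S(a,r)$ is a Euclidean ball, the Cauchy--Schwarz/$\mathcal{A}_2$ computation you describe does give $|G\cap S(a,r)|_\omega\geqslant \delta^2[\omega]_{\mathcal{A}_2}^{-1}|S(a,r)|_\omega$ directly (the paper obtains the Lebesgue-to-weighted passage more laboriously, via the set $F$ in Proposition \ref{the measure of F}), and your good-case summation with bounded overlap of the $S(a_n,r)$ is sound. But the proof is genuinely incomplete at exactly the step you flag. In the bad case you reduce to bounding $\sum_n\int_{S(a_n,r)}|f-f(a_n)|^2\omega\,dA$, and Lemma \ref{gradient estimation} only supplies the gain $\eta^2$ for oscillation across pseudohyperbolic distance $\eta$; here the oscillation is across the \emph{fixed macroscopic} radius $r$ imposed by the hypothesis, so the best available bound is $C(r)\|f\|^2_{L^2(\omega)}$ with $C(r)$ in no way small, and absorption into the left-hand side fails. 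The ``multiscale bookkeeping'' you defer is never specified: no mechanism is given for why mass concentrating in a deeper Carleson disk $S(a',r)$ is recaptured with a summable gain, and nothing in your dichotomy uses harmonicity in the bad case, so you cannot rule out that essentially every $n$ is bad. As written this is a frame with the central estimate missing, not a proof.

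The paper's proof resolves precisely this tension by a different mechanism, and it is worth seeing why: its small parameter is analytic, not geometric, hence decoupled from $r$. It regularizes the weight to $\omega^*(z)=|S(z,r)|_\omega/|S(z,r)|$, which is essentially constant on each $S(a,r)$ (Lemma \ref{measure estimate0}) and dominates $\omega$ in the sense of Lemma \ref{norm of w less then norm of w*}; it removes the exceptional sets $A$ and $B$ where $|f(a)|^2$ is at most $\epsilon$ (resp.\ $\epsilon^2$) times an \emph{unweighted} local mean, and shows via Lemmas \ref{the integral over A} and \ref{the integral over B} that these carry only $O(\epsilon)\|f\|^2_{L^2(\omega^*)}$ --- absorbable because $\epsilon$ is a free threshold, not a spatial scale. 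Off $B$, instead of estimating $|f-f(a)|$, it invokes Luecking's superlevel-set lemma for harmonic functions (Lemma \ref{measure estimate3}, from \cite{Luecking1983}): the set $\{z\in S(a,r):|f(z)|>\theta|f(a)|\}$ occupies a $(1-\delta_0/2)$-fraction of $S(a,r)$ in Lebesgue measure. This works at scale $r$ itself because it asserts \emph{largeness of the set} where $|f|$ stays comparable to $|f(a)|$ rather than smallness of the oscillation; intersecting with $G$ (and $F$) and converting Lebesgue largeness into $\omega^*$-largeness --- legitimate precisely because $\omega^*$ is slowly varying --- closes the argument (Lemma \ref{the weight function w*}). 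If you want to salvage your outline, replace your bad-case Chebyshev step by Lemma \ref{measure estimate3}; note, however, that its hypothesis involves unweighted averages, so you will still need $\omega^*$ (or a substitute) to handle the exceptional set where that hypothesis fails --- your $\mathcal{A}_2$ density upgrade alone does not dispense with it.
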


To prove the above theorem, we will adopt some ideas and techniques in \cite{Luecking1985}. Firstly, we need to introduce a new (weight) function $\omega^*$ and discuss some properties of $\omega^*$. In the rest of this section,  we use ``$r$" and ``$\delta$" to denote the numbers provided in Theorem \ref{the weight w}.

Now we define a positive  function $\omega^*$ on the open unit disk as follows:
$$\omega^*(z)=\omega_r^*(z):=\frac{|S(z, r)|_\omega}{|S(z, r)|}.$$
It is clear that $\omega^*\in L^1(dA)$, and so $\omega^*$ is a weight. Moreover, $\omega^*$ has
the following important property.
\begin{lemma}\label{measure estimate0}
Let $z\in \mathbb D$,  there exist constants $C_1$ and $C_2$ depending only on $r$ such that
$$C_1\omega^*(a)\leqslant \omega^*(z)\leqslant C_2 \omega^*(a)$$
for all  $a\in S(z, r)$. Consequently, we have
$$\int_{\mathbb D}\frac{\omega^*(a)}{|S(a, r)|} \chi_{S(a, r)}(z) dA(a)\leqslant C_3\omega^*(z)\ \ \ \big(z \in \mathbb D\big), $$
 where $C_3=C_3(r)$ is a constant.
\end{lemma}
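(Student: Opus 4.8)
The plan is to prove the pointwise two-sided comparison first and then read off the integral bound from it. Throughout I write $S(a,r)=B(a,r(1-|a|))$; this is a genuine Euclidean disk lying inside $\mathbb D$, since every $w\in S(a,r)$ has $|w|\le |a|+r(1-|a|)=|a|(1-r)+r<1$, and its normalized area is $|S(a,r)|=r^2(1-|a|)^2$. \emph{First} I would record the elementary geometric fact that $a\in S(z,r)$ forces $1-|a|\approx 1-|z|$: from $|a-z|<r(1-|z|)$ one gets $(1-r)(1-|z|)<1-|a|<(1+r)(1-|z|)$, so in particular $|S(a,r)|\approx|S(z,r)|$ with constants depending only on $r$. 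The same estimates hold symmetrically when $z\in S(a,r)$, which is the situation relevant to the second assertion.

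\emph{Next} I would establish the weighted comparison $|S(a,r)|_\omega\approx|S(z,r)|_\omega$. Since $\omega\in\mathcal A_2$, it is doubling on the space of homogeneous type $(\mathbb D,|\cdot|,dA)$: arguing exactly as in the proof of Lemma \ref{two disks} (apply the $\mathcal A_2$ inequality to the larger ball, pass to the smaller ball in the $\omega^{-1}$-factor, then use Cauchy--Schwarz) yields, for each $M>1$, a constant $C=C(M,[\omega]_{\mathcal A_2})$ with
$$|B(a,M\rho)\cap\mathbb D|_\omega\le C\,|B(a,\rho)\cap\mathbb D|_\omega\qquad(a\in\mathbb D,\ \rho>0).$$
If $a\in S(z,r)$, then $S(a,r)$ and $S(z,r)$ overlap (both contain a neighbourhood of $a$) and, by the first step, have comparable radii; hence each is contained in a concentric dilate of the other by a factor depending only on $r$ — for instance $S(z,r)\subset B\!\left(a,\tfrac{2r}{1-r}(1-|a|)\right)$. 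Applying the doubling estimate therefore gives $|S(a,r)|_\omega\approx|S(z,r)|_\omega$, and combining with the first step and the definition $\omega^*(w)=|S(w,r)|_\omega/|S(w,r)|$ produces $C_1\omega^*(a)\le\omega^*(z)\le C_2\omega^*(a)$ with $C_1,C_2$ depending only on $r$. The argument uses only that the two disks overlap and have comparable radii, so it applies verbatim when $z\in S(a,r)$ as well.

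\emph{For the consequence} I observe that $\chi_{S(a,r)}(z)=1$ precisely when $z\in S(a,r)$, so the integral runs over $E_z:=\{a\in\mathbb D:\ z\in S(a,r)\}$. If $a\in E_z$ then $|z-a|<r(1-|a|)$, whence (as in the first step) $1-|a|<\tfrac{1}{1-r}(1-|z|)$, so $E_z\subset B\!\left(z,\tfrac{r}{1-r}(1-|z|)\right)$ and therefore $|E_z|\le\tfrac{1}{(1-r)^2}|S(z,r)|$. Moreover, for $a\in E_z$ the symmetric form of the comparison gives $\omega^*(a)\le C_2\omega^*(z)$ and $|S(a,r)|\ge\tfrac{1}{(1+r)^2}|S(z,r)|$. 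Hence
\begin{align*}
\int_{\mathbb D}\frac{\omega^*(a)}{|S(a,r)|}\chi_{S(a,r)}(z)\,dA(a)
&=\int_{E_z}\frac{\omega^*(a)}{|S(a,r)|}\,dA(a)\\
&\le (1+r)^2 C_2\,\frac{\omega^*(z)}{|S(z,r)|}\,|E_z|
\le C_3\,\omega^*(z),
\end{align*}
with $C_3=C_3(r)$, as required.

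The hard part will be the weighted comparison $|S(a,r)|_\omega\approx|S(z,r)|_\omega$ uniformly in $z$. The delicate point is that for $r$ close to $1$ the concentric enlargements $B(a,M(1-|a|))$ protrude past $\partial\mathbb D$, so one must invoke the doubling property of $\omega$ for balls \emph{truncated} to $\mathbb D$ rather than for full Euclidean disks; this is exactly where the homogeneous-space structure of $(\mathbb D,|\cdot|,dA)$ and the doubling of $\mathcal A_2$ weights enter, and it is the only place the dependence on $[\omega]_{\mathcal A_2}$ is introduced. Everything else reduces to the elementary Euclidean inequalities of the first step.
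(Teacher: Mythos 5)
Your proof is correct, and it takes a genuinely more self-contained route than the paper's. For the pointwise comparison the paper simply cites Lemma 2.2 of \cite{Con} for the weighted equivalence $|S(a,r)|_\omega\approx|S(z,r)|_\omega$ and invokes the standard Euclidean equivalence $|S(a,r)|\approx|S(z,r)|$ as ``well known''; you instead reprove the weighted comparison directly from the $\mathcal{A}_2$ condition via a doubling estimate for boundary-truncated Euclidean balls, mimicking the Cauchy--Schwarz argument of Lemma \ref{two disks} (which the paper carries out only for pseudohyperbolic disks). You also record explicitly the symmetric case $z\in S(a,r)$, which is the form actually needed in the integral estimate --- a point the paper passes over silently, since its displayed comparison is stated only for $a\in S(z,r)$. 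For the consequence the routes diverge more visibly: the paper enlarges $S(a,r)$ to the pseudohyperbolic disk $D(a,r)$, exploits the symmetry $\chi_{D(a,r)}(z)=\chi_{D(z,r)}(a)$ of the pseudohyperbolic metric, and integrates over $D(z,r)$; you stay entirely Euclidean, bounding the region $E_z=\{a:z\in S(a,r)\}$ by the ball $B\big(z,\tfrac{r}{1-r}(1-|z|)\big)$, whose measure is at most $(1-r)^{-2}|S(z,r)|$. Both arguments are valid; yours is longer but removes the external citation and the pseudohyperbolic detour, at the small cost of verifying doubling for truncated balls --- which, as you correctly flag, needs only the lower regularity $|B(a,\rho)\cap\mathbb{D}|\gtrsim\min(\rho,1)^2$ of the homogeneous space, together with one routine extra step when $M\rho\geqslant 1$, since the paper's $\mathcal{A}_2$ supremum runs only over radii less than $1$ (there, $\omega,\omega^{-1}\in L^1(dA)$ make all large balls $\omega$-comparable to $|\mathbb{D}|_\omega$).
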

\begin{proof}
By Lemma 2.2  in \cite{Con}, there exists a positive  constant $C$ depending only on $r$  such that
$$ C^{-1}|S(a, r)|_\omega \leqslant |S(z, r)|_\omega \leqslant C|S(a, r)|_\omega $$
 Moreover, it is well known that  $|S(z, r)|$ is equivalent to $|S(a, r)|$ (with constants independent of $a$ and $z$) if  $a\in S(z, r)$.
This gives the first conclusion of the lemma.  Based on this result,  we have
\begin{align*}
\int_{\mathbb D}\frac{\omega^*(a)}{|S(a, r)|}\chi_{S(a, r)}(z)dA(a)
&\leqslant C \int_{\mathbb D}\frac{\omega^*(z)}{|S(a, r)|}\chi_{S(a, r)}(z)dA(a)\ \ \  \big(\mathrm{using}\ z\in S(a,r)\big)\\
 &\leqslant C \int_{\mathbb D}\frac{\omega^*(z)}{|S(a, r)|}\chi_{D(a, r)}(z)dA(a) \ \ \  \big(\mathrm{since}\ S(a,r)\subset D(a,r)\big) \\
 &=C\int_{\mathbb D}\frac{\omega^*(z)}{|S(a, r)|}\chi_{D(z, r)}(a)dA(a)\\
 &=C\int_{D(z, r)}\frac{\omega^*(z)}{|S(a, r)|}dA(a)\\
& \leqslant C_3 \omega^*(z),
\end{align*}
where $C_3$ is a positive constant depending only on $r$, as desired.
\end{proof}
Another property of $\omega^*$ is given by the following inequality, which will be used to estimate the integral of $|f|^2 \omega$ over the subset $G$.
\begin{lemma}\label{norm of w less then norm of w*} Let $\omega$ be an $\mathcal {A}_2$ weight. Then there exists a constant $C>0$ depending only on  $r$ such that
$$\|f\|_{L^2(\omega)}^2 \leqslant C \|f\|_{L^2(\omega^*)}^2$$
for all $f\in L_h^2(\omega)$.
\end{lemma}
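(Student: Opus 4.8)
The plan is to transfer the weight $\omega$ onto the averaged weight $\omega^*$ by combining the sub-mean value property of harmonic functions with a Fubini argument. First I would apply Lemma \ref{subharmonic behavior of harmonic functions} with $p=2$ on the Euclidean disk $S(\xi,r)=B(\xi,r(1-|\xi|))$, which is contained in $\mathbb D$ because $|w|\leqslant |\xi|(1-r)+r<1$ for every $w\in S(\xi,r)$. Since $p\geqslant 1$ the constant equals $1$, so
\[
|f(\xi)|^2\leqslant \frac{1}{|S(\xi,r)|}\int_{S(\xi,r)}|f(z)|^2\,dA(z)\qquad(\xi\in\mathbb D).
\]
Multiplying by $\omega(\xi)$, integrating over $\mathbb D$, and interchanging the order of integration (all integrands being nonnegative) would yield
\[
\|f\|_{L^2(\omega)}^2\leqslant \int_{\mathbb D}|f(z)|^2\,\Phi(z)\,dA(z),\qquad \Phi(z):=\int_{\mathbb D}\frac{\omega(\xi)}{|S(\xi,r)|}\,\chi_{S(\xi,r)}(z)\,dA(\xi).
\]

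The crux is then to prove $\Phi(z)\leqslant C\,\omega^*(z)$ with $C=C(r)$, after which the lemma is immediate from the definition $\omega^*(z)=|S(z,r)|_\omega/|S(z,r)|$. To estimate $\Phi$, I would first note that $z\in S(\xi,r)$ forces $1-|\xi|$ and $1-|z|$ to be comparable, with constants depending only on $r$, so that $|S(\xi,r)|\approx|S(z,r)|$ and the denominator can be replaced by $|S(z,r)|$. Next, the condition $z\in S(\xi,r)$, i.e. $|z-\xi|<r(1-|\xi|)$, implies $\xi\in S(z,r')$ with $r'=r/(1-r)$, so the region of integration in $\xi$ is contained in $S(z,r')$. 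This gives
\[
\Phi(z)\leqslant \frac{C}{|S(z,r)|}\int_{S(z,r')}\omega(\xi)\,dA(\xi)=\frac{C\,|S(z,r')|_\omega}{|S(z,r)|}.
\]
Finally, the doubling property of $\mathcal A_2$ weights on these disks (Lemma 2.2 in \cite{Con}, already invoked for Lemma \ref{measure estimate0}) yields $|S(z,r')|_\omega\leqslant C'|S(z,r)|_\omega$, and hence $\Phi(z)\leqslant C''\,\omega^*(z)$.

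I expect the geometric and doubling bookkeeping in the estimate of $\Phi$ to be the only genuine obstacle, and it mirrors exactly the computation in the proof of Lemma \ref{measure estimate0}: the comparison $|S(\xi,r)|\approx|S(z,r)|$ and the containment $\{\xi:z\in S(\xi,r)\}\subset S(z,r')$ must be established with constants depending only on $r$, and the control of $|S(z,r')|_\omega$ by $|S(z,r)|_\omega$ must be justified through the $\mathcal A_2$ doubling even when $r'$ is not small. Once $\Phi(z)\leqslant C\,\omega^*(z)$ is in hand, inserting it into the displayed Fubini estimate gives $\|f\|_{L^2(\omega)}^2\leqslant C\int_{\mathbb D}|f(z)|^2\omega^*(z)\,dA(z)=C\|f\|_{L^2(\omega^*)}^2$, which is the desired inequality.
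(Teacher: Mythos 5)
Your proposal is correct, but it runs the sub-mean-value-plus-Fubini argument in the opposite direction from the paper, and this costs an extra ingredient. The paper starts from $\|f\|_{L^2(\omega^*)}^2$, expands $\omega^*$, applies Fubini, and then bounds the resulting inner integral from \emph{below}: using the containment $S\big(\xi,\tfrac{r}{2(1+r)}\big)\subset\{z\in\mathbb D:|z-\xi|<r(1-|z|)\}$, the comparability of $|S(z,r)|$ with $|S(\xi,r)|$, and the sub-mean-value property of $|f|^2$ on the shrunken disk, it obtains the pointwise estimate ``inner integral $\geqslant c(r)|f(\xi)|^2$'' -- and notably this uses no property of $\omega$ at all, so the paper's proof is valid for an arbitrary weight. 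You instead start from $\|f\|_{L^2(\omega)}^2$, majorize $|f(\xi)|^2$ by its average over $S(\xi,r)$, and must then dominate the kernel $\Phi(z)=\int_{\mathbb D}\omega(\xi)|S(\xi,r)|^{-1}\chi_{S(\xi,r)}(z)\,dA(\xi)$ by $C\,\omega^*(z)$. Your geometric steps are sound: $z\in S(\xi,r)$ does force $\tfrac{1-|z|}{1+r}<1-|\xi|<\tfrac{1-|z|}{1-r}$, hence $|S(\xi,r)|\approx|S(z,r)|$ and $\xi\in S(z,r')$ with $r'=r/(1-r)$. But the final step $|S(z,r')\cap\mathbb D|_\omega\leqslant C(r)|S(z,r)|_\omega$ genuinely needs the doubling of $\omega$, and since the $r$ of Theorem \ref{the weight w} is an arbitrary element of $(0,1)$, one can have $r'\geqslant 1$, so $S(z,r')$ must be truncated by $\mathbb D$ and the doubling iterated on the order of $\log_2\tfrac{1}{1-r}$ times; this is routine (the Cauchy--Schwarz computation in the paper's Lemma \ref{two disks} transfers verbatim to Euclidean balls on the homogeneous space $(\mathbb D,d,dA)$), and you correctly flagged it, so there is no gap. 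In short: the paper shrinks the disk on the $\omega^*$ side and gets a weight-free argument, while your dilation of the disk on the $\omega$ side is correct at the price of invoking $\mathcal A_2$ doubling -- a hypothesis available here, so your route works, but it is slightly less economical and does not reveal that the lemma holds without the $\mathcal A_2$ assumption.
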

\begin{proof}
By definitions, we have
\begin{align*}
\|f\|_{L^2(\omega^*)}^2&=\int_{\mathbb D} |f(z)|^2\omega^*(z)dA(z)\\
&=\int_{\mathbb D} \omega(\xi)\bigg(\int_{\mathbb D} |f(z)|^2 \frac{\chi_{S(z, r)}(\xi)}{|S(z, r)|}dA(z)\bigg)dA(\xi),
\end{align*}
 We next deal with the bracketed expression. Observe
$$ S\Big(\xi, \frac{r}{2(1+r)}\Big)\subset \big\{z\in \mathbb D: |z-\xi|<r(1-|z|)\big\},$$
we obtain
\begin{align*}
\int_{\mathbb D} |f(z)|^2 \frac{\chi_{S(z, r)}(\xi)}{|S(z, r)|}dA(z)
&=\int_{\{z\in \mathbb D: |z-\xi|<r(1-|z|\}} \frac{|f(z)|^2 }{|S(z, r)|}dA(z)\\
&\geqslant \int_{S(\xi, \frac{r}{2(1+r)})} \frac{|f(z)|^2 }{|S(z, r)|}  dA(z)\\
& \geqslant \frac{C}{|S(\xi, r)|} \int_{S(\xi, \frac{r}{2(1+r)})} |f(z)|^2 dA(z) \ \ \  \ \bigg(\mathrm{since}\ z\in S\big(\xi,\frac{r}{2(1+r)}\big)\bigg)\\
&=C\bigg[\frac{1}{|S(\xi, \frac{r}{2(1+r)})|} \int_{S(\xi, \frac{r}{2(1+r)})} |f(z)|^2 dA(z)\bigg] \cdot \frac{|S(\xi, \frac{r}{2(1+r)})|}{|S(\xi, r)|}\\
&\geqslant \frac{C}{16} |f(\xi)|^2,
\end{align*}
the last inequality follows form the subharmonicity  of $|f|^2$ (see Lemma \ref{subharmonic behavior of harmonic functions}) and
$C$ depends only on $r$.  Thus we get
$$\|f\|_{L^2(\omega^*)}^2\geqslant \frac{C}{16}  \int_{\mathbb D} |f(\xi)|^2 \omega(\xi)dA(\xi)=\frac{C}{16} \|f\|^2_{L^2(\omega)},$$
which finishes the proof of Lemma \ref{norm of w less then norm of w*}.
\end{proof}

In order to complete the proof the main theorem in this section, the following two key lemmas are also needed.
\begin{prop}\label{the measure of F}
Let $G$ be the subset which satisfies the assumption in Theorem \ref{the weight w}. For $\eta\in (0, 1)$, we  define a subset $F$  as the following:
 $$F:=\{z\in \mathbb D: \omega(z)\geqslant \eta \omega^*(z)\}.$$
Then one can choose  $\eta$ (depending only on $\delta$ and $r$) sufficiently small such that
$$|F\cap S(a, r)|\geqslant (1-\frac{\delta}{2})|S(a, r)|$$
and
$$\Big| G\cap S(a, r)\cap F\Big|\geqslant \frac{\delta}{2}|S(a, r)|$$
for all $a\in \mathbb D$,

\end{prop}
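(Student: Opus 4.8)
The plan is to reduce everything to the single measure estimate
$$|S(a,r)\setminus F|\leqslant \tfrac{\delta}{2}\,|S(a,r)|\qquad (a\in\dd)$$
for a suitably small $\eta$, since both asserted inequalities follow from it at once. The first is just $|F\cap S(a,r)|=|S(a,r)|-|S(a,r)\setminus F|$, and for the second I would write
$$|G\cap S(a,r)\cap F|\geqslant |G\cap S(a,r)|-|S(a,r)\setminus F|\geqslant \delta|S(a,r)|-\tfrac{\delta}{2}|S(a,r)|=\tfrac{\delta}{2}|S(a,r)|,$$
using the hypothesis $|G\cap S(a,r)|\geqslant\delta|S(a,r)|$ from Theorem \ref{the weight w}. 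So the whole matter reduces to controlling the ``bad set'' $S(a,r)\setminus F=\{z\in S(a,r):\omega(z)<\eta\omega^*(z)\}$.

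To bound this set, the first observation is that $S(a,r)=\{z:|z-a|<r(1-|a|)\}$ is a genuine Euclidean ball of radius $r(1-|a|)<1$, so the $\mathcal{A}_2$ condition applies to it directly and gives
$$|S(a,r)|_{\omega^{-1}}\leqslant [\omega]_{\mathcal{A}_2}\frac{|S(a,r)|^2}{|S(a,r)|_\omega}.$$
Next I would produce a pointwise lower bound for $\omega^{-1}$ on the bad set. By Lemma \ref{measure estimate0}, $\omega^*$ is comparable to $\omega^*(a)$ throughout $S(a,r)$, say $\omega^*(z)\leqslant C_2\,\omega^*(a)=C_2\,|S(a,r)|_\omega/|S(a,r)|$; hence on $S(a,r)\setminus F$ we have $\omega(z)<\eta\omega^*(z)\leqslant \eta C_2\,|S(a,r)|_\omega/|S(a,r)|$, that is
$$\frac{1}{\omega(z)}>\frac{|S(a,r)|}{\eta C_2\,|S(a,r)|_\omega}\qquad(z\in S(a,r)\setminus F).$$

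Integrating this lower bound over the bad set and comparing with the $\mathcal{A}_2$ estimate is the crux of the argument:
$$\frac{|S(a,r)|}{\eta C_2\,|S(a,r)|_\omega}\,|S(a,r)\setminus F|\leqslant \int_{S(a,r)\setminus F}\frac{1}{\omega}\,dA\leqslant |S(a,r)|_{\omega^{-1}}\leqslant[\omega]_{\mathcal{A}_2}\frac{|S(a,r)|^2}{|S(a,r)|_\omega},$$
and cancelling the common factor $|S(a,r)|/|S(a,r)|_\omega$ yields $|S(a,r)\setminus F|\leqslant \eta C_2[\omega]_{\mathcal{A}_2}|S(a,r)|$. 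It then suffices to choose $\eta$ so small that $\eta C_2[\omega]_{\mathcal{A}_2}\leqslant\tfrac{\delta}{2}$; since $C_2$ depends only on $r$ and $[\omega]_{\mathcal{A}_2}$ is fixed, this $\eta$ depends only on $\delta$ and $r$, as required. The step I expect to be the main obstacle is the pointwise control of $\omega^*$ over $S(a,r)$: Lemma \ref{measure estimate0} is phrased for points $a\in S(z,r)$, whereas here I need comparability as $z$ ranges over $S(a,r)$ with $a$ fixed, and the regions $S(a,r)$, $S(z,r)$ are not literally symmetric. I would resolve this by returning to the two ingredients behind that lemma — the comparability $|S(z,r)|_\omega\approx|S(a,r)|_\omega$ (Lemma 2.2 in \cite{Con}) and $|S(z,r)|\approx|S(a,r)|$ for hyperbolically close $z,a$ — both of which hold in the needed sense because $z\in S(a,r)$ forces $1-|z|\approx 1-|a|$. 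Everything else is a short computation.
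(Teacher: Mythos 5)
Your proof is correct and takes essentially the same route as the paper's: the same reduction of both inequalities to $|S(a,r)\setminus F|\leqslant \frac{\delta}{2}|S(a,r)|$ via inclusion-exclusion, and the same Chebyshev-plus-$\mathcal{A}_2$ estimate on the bad set combined with Lemma \ref{measure estimate0} to compare $\omega^*(z)$ with $\omega^*(a)$ (the paper merely orders these steps differently, proving an intermediate claim phrased with $\omega^*(a)$ before converting to $\omega^*(z)$, while you make the pointwise comparison first). Your observation about the asymmetry between $z\in S(a,r)$ and $a\in S(z,r)$ in Lemma \ref{measure estimate0} flags a point the paper glosses over, and your resolution via $1-|z|\approx 1-|a|$ is the correct one.
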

\begin{proof} We first claim that for any $\delta'\in (0, 1)$,
there exists  $\eta'=\eta'(\delta')>0$ such that
$$\big|\{z\in S(a, r): \omega(z)<\eta' \omega^*(a)\}\big|<\delta' |S(a, r)|$$
for all $a\in \mathbb D$.

Indeed, for each $\kappa \in (0, 1)$ and $a\in \mathbb D$, we have
\begin{align*}
&\big|\{z\in S(a, r): \omega(z)<\kappa \omega^*(a)\}\big|\cdot \frac{1}{\kappa \omega^*(a)}\\
&< \int_{\{z\in S(a, r): \omega(z)<\kappa \omega^*(a)\}}\frac{1}{\omega(z)} dA(z)\\
&\leqslant |S(a, r)|_{\omega^{-1}}\\
&\leqslant [\omega]_{\mathcal{A}_2} |S(a, r)|^2\cdot |S(a, r)|_\omega^{-1}.
\end{align*}
So, we obtain
$$\big|\{z\in S(a, r): \omega(z)<\kappa\omega^*(a)\}\big| <  ([\omega]_{\mathcal{A}_2}  \kappa) |S(a, r)|$$
for all $a\in \mathbb D$ and $\kappa\in (0, 1)$.
By this inequality, we can choose any  $0<\eta' \leqslant [\omega]_{\mathcal{A}_2}^{-1}\delta'$ to finish the proof of the claim.

Lemma \ref{measure estimate0} guarantees  that there is a constant $C=C(r)$ such that
$$\{z\in S(a, r): \omega(z)<C \tau \omega^*(z)\}\subset \{z\in S(a, r): \omega(z)< \tau  \omega^*(a)\}\ \ \ \big( a\in \mathbb D \big) $$
for every $\tau\in (0, 1)$. By the claim and its proof, there exists a   $\tau=\tau(\delta)<C^{-1}$ such that
$$\big|\{z\in S(a, r): \omega(z)< \tau \omega^*(a)\}\big|<\frac{\delta}{2}|S(a, r)|  \ \ \ \big( a\in \mathbb D \big) .$$

Therefore, we can take  $\eta=\eta(\delta, r)=C\tau<1$ such that
$$\big|\{z\in S(a, r): \omega(z)< \eta \omega^*(z)\}\big|<\frac{\delta}{2}|S(a, r)|  \ \ \ \big( a\in \mathbb D \big).$$
Using this $\eta$ to define the corresponding  $F$, so that
$$|F\cap S(a, r)|=\big|\{z\in S(a, r): \omega(z)\geqslant \eta \omega^*(z)\}\big| \geqslant (1-\frac{\delta}{2})|S(a, r)|$$
for all $a\in \mathbb D$.

By assumption
$$|G\cap S(a, r)|\geqslant \delta | S(a, r)|,$$
we have
\begin{align*}
\delta |S(a, r)|&\leqslant |G\cap S(a, r)|\\
&=\Big| \big[G\cap S(a, r)\cap F\big]\cup \big[G\cap S(a, r)\cap (\mathbb D \setminus F)\big]\Big|\\
&\leqslant \Big| G\cap S(a, r)\cap F\Big|+|S(a, r)\cap (\mathbb D \setminus F)|\\
&= \Big| G\cap S(a, r)\cap F\Big|+|S(a, r)|-|S(a, r)\cap F|\\
&\leqslant \Big| G\cap S(a, r)\cap F\Big|+|S(a, r)|-(1-\frac{\delta}{2})|S(a, r)|.
\end{align*}
This gives that
$$\Big| G\cap S(a, r)\cap F\Big|\geqslant \frac{\delta}{2}|S(a, r)|$$
for all $a\in \mathbb D$, as desired.
\end{proof}

\begin{lemma}\label{the weight function w*} If  $G_0$ is  a measurable  subset of $\mathbb D$  that satisfies
 $$|G_0\cap S(a, r)|\geqslant \delta_0 |S(a, r)| \ \ \  \big( a \in \mathbb D \big)$$
for some $\delta_0>0$,
then
there exists constant $C=C(\delta_0, r)>0$ such that
$$ \int_{\mathbb D}|f(z)|^2\omega^*(z)dA(z)\leqslant C \int_{G_0}|f(z)|^2\omega^*(z) dA(z)$$
for all $f\in L_h^2(\omega^*)$.
\end{lemma}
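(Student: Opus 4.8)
The plan is to reduce the global weighted inequality to a \emph{local} reverse inequality on each disk $S(a,r)$, to pull the weight $\omega^*$ through that local inequality using the fact that $\omega^*$ is essentially constant on $S(a,r)$, and finally to sum over a bounded-overlap covering of $\mathbb D$ by such disks.

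\emph{Step 1 (the local reverse inequality).} First I would prove that there is a constant $C=C(\delta_0,r)$, \emph{independent of $a$}, such that for every harmonic $f$ and every $a\in\mathbb D$,
\[
\int_{S(a,r)}|f|^2\,dA\leqslant C\int_{G_0\cap S(a,r)}|f|^2\,dA.
\]
The uniformity in $a$ comes from affine invariance: the map $\phi_a(\zeta)=a+r(1-|a|)\zeta$ sends $\mathbb D$ onto $S(a,r)$ and, being affine, carries harmonic functions to harmonic functions, while it multiplies both sides by the same Jacobian factor and preserves the density ratio $|G_0\cap S(a,r)|/|S(a,r)|\geqslant\delta_0$. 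Since $f$ is in fact harmonic on the slightly larger disk $S(a,\frac{1+r}{2})$, the pullback $g=f\circ\phi_a$ is harmonic on $\lambda\mathbb D$ with $\lambda=\frac{1+r}{2r}>1$, so $\overline{\mathbb D}$ sits compactly inside the domain of $g$. Thus it suffices to show: for $g$ harmonic on $\lambda\mathbb D$ and $E\subset\mathbb D$ with $|E|\geqslant\delta_0|\mathbb D|$, one has $\int_{\mathbb D}|g|^2\leqslant C\int_E|g|^2$.

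\emph{Step 2 (the model inequality, main obstacle).} I expect this compactness step to be the crux. Argue by contradiction: suppose there are harmonic $g_n$ on $\lambda\mathbb D$ with $\int_{\mathbb D}|g_n|^2=1$ and $E_n\subset\mathbb D$, $|E_n|\geqslant\delta_0|\mathbb D|$, but $\int_{E_n}|g_n|^2\to0$. By the sub-mean-value property of $|g_n|^2$ (Lemma \ref{subharmonic behavior of harmonic functions}) the family is uniformly bounded on the compact set $\overline{\mathbb D}$, so a subsequence converges uniformly on $\overline{\mathbb D}$ to a harmonic limit $g$ with $\int_{\mathbb D}|g|^2=1$; in particular $g\not\equiv0$, so its zero set is null. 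Passing to a further subsequence, $\chi_{E_n}\rightharpoonup h$ weak-$\ast$ in $L^\infty(\mathbb D)$ with $0\leqslant h\leqslant1$ and $\int_{\mathbb D}h\geqslant\delta_0|\mathbb D|>0$. Combining the uniform convergence of $|g_n|^2$ with the weak-$\ast$ convergence of $\chi_{E_n}$ gives $\int_{E_n}|g_n|^2\to\int_{\mathbb D}h|g|^2$, and the latter integral is strictly positive because $h>0$ on a set of positive measure while $|g|^2>0$ almost everywhere. This contradicts $\int_{E_n}|g_n|^2\to0$ and proves the model inequality, hence Step 1.

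\emph{Step 3 (weight and globalization).} By Lemma \ref{measure estimate0}, $\omega^*$ is comparable to $\omega^*(a)$ throughout $S(a,r)$, so multiplying the local inequality of Step 1 and replacing $\omega^*$ by the constant $\omega^*(a)$ up to fixed multiplicative factors yields
\[
\int_{S(a,r)}|f|^2\omega^*\,dA\leqslant C\int_{G_0\cap S(a,r)}|f|^2\omega^*\,dA\qquad(a\in\mathbb D),
\]
with $C=C(\delta_0,r)$. Finally I would choose a sequence $\{a_j\}$ for which the disks $S(a_j,r)$ cover $\mathbb D$ with overlap bounded by some $N=N(r)$ (a separated pseudohyperbolic lattice does this, since the $S(a,r)$ are comparable to pseudohyperbolic disks). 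Summing the local weighted inequalities, using the covering on the left and the bounded overlap on the right, gives
\[
\int_{\mathbb D}|f|^2\omega^*\,dA\leqslant\sum_j\int_{S(a_j,r)}|f|^2\omega^*\,dA\leqslant C\sum_j\int_{G_0\cap S(a_j,r)}|f|^2\omega^*\,dA\leqslant CN\int_{G_0}|f|^2\omega^*\,dA,
\]
which is the desired inequality with constant $CN=C(\delta_0,r)$.
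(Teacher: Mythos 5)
Your Steps 1--2 contain a genuine and fatal gap: the ``model inequality'' is false, and with it the uniform local reverse inequality of Step 1. Take $E=\{|z|<\sqrt{\delta_0}\}$, so $|E|=\delta_0|\mathbb D|$, and $g_n(z)=\mathrm{Re}(z^n)$, which is harmonic on all of $\mathbb C$, hence on every $\lambda\mathbb D$. A direct computation in polar coordinates gives $\int_{E}|g_n|^2\,dA\big/\int_{\mathbb D}|g_n|^2\,dA=\delta_0^{\,n+1}\to0$, so no constant $C(\delta_0,\lambda)$ can exist; pulling back by $\phi_a$ shows the localized inequality $\int_{S(a,r)}|f|^2\leqslant C\int_{G_0\cap S(a,r)}|f|^2$ fails even for $f$ harmonic on the whole plane. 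The compactness argument breaks exactly at the claim that the normalized family is uniformly bounded on $\overline{\mathbb D}$: the sub-mean-value property (Lemma \ref{subharmonic behavior of harmonic functions}) bounds $|g_n(z)|^2$ by the average of $|g_n|^2$ over a ball around $z$ contained in $\lambda\mathbb D$, but for $z$ near $\partial\mathbb D$ that ball is not contained in $\mathbb D$, which is the only region where the $L^2$ norm is normalized. Indeed $g_n=\sqrt{2(n+1)}\,\mathrm{Re}(z^n)$ satisfies $\|g_n\|_{L^2(\mathbb D)}=1$ while $\sup_{\overline{\mathbb D}}|g_n|\to\infty$ and $g_n\to0$ locally uniformly in $\mathbb D$: the mass escapes to the boundary, the limit function is $0$ rather than of norm $1$, and the contradiction you aim for never materializes.

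The conceptual point your reduction misses is that a single-disk density hypothesis $|E|\geqslant\delta_0|\mathbb D|$ cannot control harmonic functions oscillating at high frequency near the boundary of that disk; localizing to one $S(a,r)$ discards the hypothesis's density information at all smaller scales inside $S(a,r)$, which is precisely what the full assumption (density in \emph{every} $S(a,r)$) provides. This is why the paper does not localize. Following Luecking, it introduces the exceptional sets $A$ and $B$ of points $a$ where $|f(a)|^2$ is small compared with its average over $S(a,r)$, proves via Lemmas \ref{the integral over A} and \ref{the integral over B} that these sets carry at most $C\epsilon\,\|f\|_{L^2(\omega^*)}^2$ of the norm, and at the remaining ``good'' points uses the sub-level-set estimate of Lemma \ref{measure estimate3} together with the density of $G_0$ to obtain the pointwise bound $\frac{1}{|S(a,r)|}\int_{S(a,r)\cap G_0}|f|^2\,dA>\frac{\theta^2\delta_0}{2}|f(a)|^2$ for $a\notin B$, then integrates against $\omega^*(a)$ and concludes with Fubini and Lemma \ref{measure estimate0}. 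Your Step 3 (comparability of $\omega^*$ on $S(a,r)$, bounded-overlap covering) is sound but rests on the false local inequality; some version of this good/bad point decomposition is unavoidable, and you would need to rebuild your argument around it.
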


The proof of the preceding lemma is some what long and it requires a number of technical lemmas. Let us assume this result  for the moment and we will prove it at the end of this section. Now we give the proof of Theorem \ref{the weight w}. \vspace{2mm}\\
\begin{proof}[Proof of Theorem \ref{the weight w}]  By  Proposition \ref{the measure of F} and Lemma \ref{the weight function w*},
 we have
$$ \int_{\mathbb D}|f(z)|^2\omega^*(z)dA(z)\leqslant C_1 \int_{G\cap F}|f(z)|^2\omega^*(z) dA(z)\leqslant C_1 \eta^{-1}\int_{G}|f|^2\omega dA$$
for all $f \in L_h^2(\omega^*)$, where $C_1$ is a constant depending only on $r$ and $\eta=\eta(\delta, r)<1$ is chosen by Proposition \ref{the measure of F}.

From Lemma \ref{norm of w less then norm of w*}, it is clear that
$$ \int_{\mathbb D}|f(z)|^2\omega (z)dA(z)\leqslant C_1 \eta^{-1} \int_{G}|f(z)|^2\omega (z) dA(z)$$
for all $f\in L_h^2(\omega)$,
which gives the desired inequality in  Theorem \ref{the weight w}.
\end{proof}

Now we turn to the proof of Lemma \ref{the weight function w*}.  Before giving the proof, we need to introduce some notations and prove three technical lemmas.

Let $0<\theta<\frac{1}{2}$, we define the subset
$$E_\theta(a)=E_\theta(f, a):=\Big\{z\in S(a, r): |f(z)| >\theta |f(a)| \Big\}$$
and  the following operator:
$$B_\theta f(a):=\frac{1}{|E_\theta(a)|}\int_{E_\theta(a)}|f(z)|^2 dA(z)\ \ \ \big(a\in \mathbb D \big).$$
It is clear that
$$B_\theta f(a)\geqslant \frac{1}{|S(a, r)|}\int_{S(a, r)}|f(z)|^2 dA(z) \ \ \ \big(a\in \mathbb D \big).$$

For $\epsilon \in (0, 1)$, we consider the  following two subsets, which are very useful to establish our main result.
Define $$A=A_\epsilon:=\bigg\{a\in\mathbb D: |f(a)|^2 \leqslant \frac{\epsilon}{|S(a, r)|}\int_{S(a, r)}|f(z)|^2 dA(z)\bigg\}$$
and
$$B=B_\epsilon:=\bigg\{a\in\mathbb D: |f(a)|^2 \leqslant \epsilon^2 B_\theta f(a)\bigg\}.$$

A useful estimation for the Lebesgue measure of $\big\{z\in S(a, r): |f(z)|>\theta|f(a)|\big\}$ with $f$  harmonic is the following inequality.
\begin{lemma}\label{measure estimate3}
Fix $\epsilon\in (0, 1)$. For any $\delta'\in (0,1 )$, there exists $ \theta \in (0, \frac{1}{2})$ such that
$$\bigg|\Big\{z\in S(a, r): |f(z)|> \theta |f(a)| \Big\}\bigg|>\big(1-\frac{\delta'}{2}\big)|S(a, r)|$$
for every   $f$  harmonic on $\mathbb D$ and  satisfies
$$|f(a)|^2> \frac{\epsilon^2}{|S(a, r)|}\int_{S(a, r)}|f(z)|^2dA(z)\ \ \ (a\in \mathbb D).$$
\end{lemma}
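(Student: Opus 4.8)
The plan is to strip the dependence on $a$ by an affine change of variables and then settle the resulting $a$-free statement by a normal-family (compactness) argument. Since $S(a,r)=\{z\in\dd:|z-a|<r(1-|a|)\}$ is the Euclidean disk of radius $r(1-|a|)$ centered at $a$, the map $\psi_a(w)=a+r(1-|a|)w$ carries the unit disk $\dd$ onto $S(a,r)$ with constant real Jacobian $[r(1-|a|)]^2$. As $\psi_a$ is holomorphic, $g:=f\circ\psi_a$ is harmonic on $\dd$ and $g(0)=f(a)$. Because the Jacobian is constant, the change of variables gives $\tfrac{1}{|S(a,r)|}\int_{S(a,r)}|f|^2\,dA=\int_{\dd}|g|^2\,dA$ and carries the set $\{z\in S(a,r):|f(z)|>\theta|f(a)|\}$ onto $\{w\in\dd:|g(w)|>\theta|g(0)|\}$, with measures scaled by the same factor. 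Thus, recalling $|\dd|=1$, the lemma is equivalent to the following $a$-free assertion: for every $\delta'\in(0,1)$ there is $\theta\in(0,\tfrac12)$ so that every harmonic $g$ on $\dd$ with $|g(0)|^2>\epsilon^2\int_\dd|g|^2\,dA$ satisfies $\bigl|\{w\in\dd:|g(w)|\le\theta|g(0)|\}\bigr|<\tfrac{\delta'}{2}$.

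Multiplying $g$ by a nonzero constant changes neither hypothesis nor conclusion, so I would normalize $g(0)=1$; then $g$ ranges over the family $\mathcal G$ of harmonic functions on $\dd$ with $g(0)=1$ and $\int_\dd|g|^2\,dA\le\epsilon^{-2}$. I argue by contradiction: if the assertion failed for some $\delta'$, there would be $g_k\in\mathcal G$ and $\theta_k\downarrow 0$ with $|E_k|\ge\tfrac{\delta'}{2}$, where $E_k=\{w\in\dd:|g_k(w)|\le\theta_k\}$. Applying Lemma \ref{subharmonic behavior of harmonic functions} to $g_k$ and to $\partial g_k$ on balls contained in $\dd$ converts the uniform bound $\int_\dd|g_k|^2\,dA\le\epsilon^{-2}$ into uniform bounds for $g_k$ and its gradient on each disk $\{|w|\le\rho\}$, $\rho<1$; hence $\{g_k\}$ is a normal family, and after passing to a subsequence $g_k\to g_\infty$ locally uniformly on $\dd$, with $g_\infty$ harmonic and $|g_\infty(0)|=1$, so in particular $g_\infty\not\equiv 0$.

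The delicate point, and the one I expect to be the main obstacle, is that the bad sets $E_k$ could a priori concentrate near $\partial\dd$, where the convergence $g_k\to g_\infty$ is not uniform; the compactness argument only delivers local uniform convergence, so the margin $1-\delta'/2$ must be spent partly on trimming the annulus near $\partial\dd$ and partly on a sublevel set of the limit. Concretely, I would fix $\rho_0<1$ so close to $1$ that $\bigl|\dd\setminus\{|w|\le\rho_0\}\bigr|<\tfrac{\delta'}{4}$, whence $\bigl|E_k\cap\{|w|\le\rho_0\}\bigr|\ge\tfrac{\delta'}{4}$. Since $g_\infty$ is harmonic and not identically zero, it is real-analytic with zero set of Lebesgue measure zero, so $\bigl|\{|w|\le\rho_0:|g_\infty(w)|\le s\}\bigr|\to 0$ as $s\downarrow 0$; choose $s>0$ with this quantity below $\tfrac{\delta'}{4}$. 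For large $k$, uniform convergence on $\{|w|\le\rho_0\}$ gives $|g_k|>s/2$ wherever $|g_\infty|>s$, while $\theta_k<s/2$, so $E_k\cap\{|w|\le\rho_0\}\subset\{|w|\le\rho_0:|g_\infty|\le s\}$ and hence $\bigl|E_k\cap\{|w|\le\rho_0\}\bigr|<\tfrac{\delta'}{4}$ — contradicting the lower bound above. This contradiction establishes the reduced assertion, and with it Lemma \ref{measure estimate3}.
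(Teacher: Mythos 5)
Your proof is correct. Note that the paper itself gives no argument for this lemma---its ``proof'' is a bare pointer to Lemma 2 of \cite{Luecking1983}---so what you have done is reconstruct that missing proof. Your reduction is sound: the affine map $w\mapsto a+r(1-|a|)w$ is holomorphic, so precomposition preserves harmonicity, the constant Jacobian makes the hypothesis and conclusion exactly $a$-free, and the normalization $g(0)=1$ is legitimate since the strict hypothesis forces $f(a)\neq 0$; the uniform bound $\int_{\mathbb D}|g_k|^2\,dA\leqslant \epsilon^{-2}$ together with Lemma \ref{subharmonic behavior of harmonic functions} does yield local boundedness and equicontinuity, hence normality. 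Where you diverge from Luecking's own argument is in handling possible concentration of the bad sets $E_k$ near $\partial\mathbb D$: Luecking passes to a weak-* limit $\phi$ of the indicators $\chi_{E_k}$ in $L^\infty$, deduces $\int_{\mathbb D}|g_\infty|^2\phi\,dA=0$ while $\int_{\mathbb D}\phi\,dA\geqslant \delta'/2$, and contradicts the fact that the nontrivial harmonic limit vanishes only on a null set; your annulus-trimming with $\rho_0$ and the sublevel threshold $s$ achieves the same end more elementarily, at the cost of splitting the margin $\delta'/2$ into two quarters. Both routes rest on the same two pillars---normal families from the uniform $L^2$ bound, and the Lebesgue-null zero set of a nonzero real-analytic function (for the complex-valued case, note $\operatorname{Re} g_\infty(0)=1$, so already the real part has a null zero set containing $\{g_\infty=0\}$)---so I would classify your argument as essentially the approach the paper cites, with the added virtue of being self-contained.
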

\begin{proof}
See the proof of Lemma 2 in  \cite{Luecking1983}.
\end{proof}
 The next lemma provides  an estimation for the integral of $|f|^2\omega^*$ over the set $A$.
\begin{lemma}\label{the integral over A}
Let $\epsilon \in (0, 1)$, then there exists a constant
$C$ (independent of $\epsilon$) such that
$$\int_{A}|f(z)|^2\omega^*(z)dA(z)\leqslant C\epsilon \int_{\mathbb D}|f(z)|^2\omega^*(z)dA(z)$$
for all $f\in L_h^2(\omega^*)$.
\end{lemma}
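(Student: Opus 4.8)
The plan is to exploit the defining inequality of $A$ directly, then interchange the order of integration by Fubini/Tonelli and apply the averaging estimate in Lemma \ref{measure estimate0}. First I would use that for every $a\in A$ one has
$$|f(a)|^2\leqslant \frac{\epsilon}{|S(a,r)|}\int_{S(a,r)}|f(z)|^2\,dA(z),$$
so that
$$\int_A |f(a)|^2\omega^*(a)\,dA(a)\leqslant \epsilon\int_A \frac{\omega^*(a)}{|S(a,r)|}\bigg(\int_{S(a,r)}|f(z)|^2\,dA(z)\bigg)\,dA(a).$$
Enlarging the outer region of integration from $A$ to all of $\mathbb D$ only increases the right-hand side, since every integrand in sight is nonnegative.

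Next I would rewrite the inner integral using the indicator $\chi_{S(a,r)}(z)$ and apply Tonelli's theorem (all integrands nonnegative) to interchange the order of integration, arriving at
$$\epsilon\int_{\mathbb D}|f(z)|^2\bigg(\int_{\mathbb D}\frac{\omega^*(a)}{|S(a,r)|}\chi_{S(a,r)}(z)\,dA(a)\bigg)\,dA(z).$$
The bracketed integral in $a$ is exactly the quantity estimated in the second part of Lemma \ref{measure estimate0}, which bounds it by $C_3\,\omega^*(z)$ with $C_3=C_3(r)$. Substituting this estimate yields
$$\int_A |f(z)|^2\omega^*(z)\,dA(z)\leqslant C_3\,\epsilon\int_{\mathbb D}|f(z)|^2\omega^*(z)\,dA(z),$$
so that $C=C_3$ works, and, crucially, $C_3$ depends only on $r$ and not on $\epsilon$.

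The argument is essentially a one-line reduction once Lemma \ref{measure estimate0} is available; the only point requiring any care is the justification of the interchange of integrals, but this is immediate because $f\in L_h^2(\omega^*)$ guarantees finiteness of the relevant double integral and all integrands are nonnegative. Consequently I do not anticipate a genuine obstacle here: the substantive work is entirely contained in the previously established averaging property of $\omega^*$, and the present lemma merely packages it together with the definition of $A$.
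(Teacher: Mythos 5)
Your proposal is correct and coincides with the paper's own proof: both use the defining inequality of $A$, interchange the order of integration by Tonelli, and invoke the second conclusion of Lemma \ref{measure estimate0} to bound $\int_{\mathbb D}\frac{\omega^*(a)}{|S(a,r)|}\chi_{S(a,r)}(z)\,dA(a)$ by $C_3\,\omega^*(z)$ with $C_3=C_3(r)$ independent of $\epsilon$. Your explicit enlargement of the domain of the $a$-integral from $A$ to $\mathbb D$ is exactly what the paper does implicitly when applying that lemma.
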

\begin{proof} For $a\in A$,  we have
$$|f(a)|^2 \leqslant \frac{\epsilon}{|S(a, r)|}\int_{\mathbb D}\chi_{S(a, r)}(z)|f(z)|^2 dA(z).$$
Multiplying the above inequality by $\omega^*(a)$ and integrating over $A$  to obtain
\begin{align*}
\int_{A}|f(a)|^2 \omega^*(a) dA(a) & \leqslant \int_{A}\frac{\epsilon}{|S(a, r)|}\int_{\mathbb D}\chi_{S(a, r)}(z)|f(z)|^2 dA(z)\omega^*(a)dA(a)\\
&=\epsilon \int_{\mathbb D} |f(z)|^2 \bigg(\int_A \frac{\chi_{S(a, r)}(z)}{|S(a, r)|}\omega^*(a) dA(a)\bigg) dA(z).
\end{align*}
By the second conclusion of Lemma \ref{measure estimate0}, we have
\begin{align*}
\int_A \frac{\chi_{S(a, r)}(z)}{|S(a, r)|} \omega^*(a) dA(a)& \leqslant C\omega^*(z),
\end{align*}
where $C$  depends only on $r$. This completes the proof.
\end{proof}

The proof of Lemma \ref{the weight function w*} requires the following inequality, which can be proved by the preceding  lemma.
\begin{lemma}\label{the integral over B}
Let $\epsilon \in (0, 1)$, then there exists a constant
$C=C(r)$ such that
$$\int_{B}|f(z)|^2\omega^*(z)dA(z)\leqslant C\epsilon \int_{\mathbb D}|f(z)|^2\omega^*(z)dA(z)$$
for all $f\in L_h^2(\omega^*)$.
\end{lemma}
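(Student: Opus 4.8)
The plan is to reduce everything to the single inclusion $B\subset A$, for a suitable choice of the auxiliary parameter $\theta$ in the definition of $B=B_\epsilon$. Once that inclusion is established, the estimate follows at once from Lemma \ref{the integral over A}, since then $\int_{B}|f|^2\omega^*\,dA\leqslant\int_{A}|f|^2\omega^*\,dA\leqslant C\epsilon\int_{\mathbb D}|f|^2\omega^*\,dA$ with $C=C(r)$ independent of $\epsilon$. So the real content is the set inclusion, and the final constant will be inherited unchanged from Lemma \ref{the integral over A}.

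To prove $B\subset A$ I would argue by contradiction, taking $a\in B\setminus A$. One may assume $\int_{S(a,r)}|f|^2\,dA>0$, since otherwise the harmonic function $f$ vanishes identically on $S(a,r)$, whence $|f(a)|^2=0$ and $a\in A$ trivially. Because $E_\theta(a)\subset S(a,r)$, the average defining $B_\theta f$ obeys $\epsilon^2 B_\theta f(a)=\frac{\epsilon^2}{|E_\theta(a)|}\int_{E_\theta(a)}|f|^2\,dA\leqslant\frac{\epsilon^2}{|E_\theta(a)|}\int_{S(a,r)}|f|^2\,dA$. Feeding in the membership condition $|f(a)|^2\leqslant\epsilon^2 B_\theta f(a)$ from $a\in B$, together with $|f(a)|^2>\frac{\epsilon}{|S(a,r)|}\int_{S(a,r)}|f|^2\,dA$ from $a\notin A$, and cancelling the positive common factor $\int_{S(a,r)}|f|^2\,dA$, I obtain the measure bound $|E_\theta(a)|<\epsilon\,|S(a,r)|$.

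The contradiction then comes from Lemma \ref{measure estimate3}. Indeed, $a\notin A$ gives $|f(a)|^2>\frac{\epsilon}{|S(a,r)|}\int_{S(a,r)}|f|^2\,dA\geqslant\frac{\epsilon^2}{|S(a,r)|}\int_{S(a,r)}|f|^2\,dA$ (here the gap $\epsilon>\epsilon^2$ for $\epsilon\in(0,1)$ is used), so the hypothesis of Lemma \ref{measure estimate3} is available at $a$. Applying that lemma with the choice $\delta'=1-\epsilon\in(0,1)$, which fixes the $\theta$ used to define $B$, yields $|E_\theta(a)|>(1-\tfrac{\delta'}{2})|S(a,r)|=\tfrac{1+\epsilon}{2}|S(a,r)|$. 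Since $\tfrac{1+\epsilon}{2}>\epsilon$ for all $\epsilon\in(0,1)$, this is incompatible with $|E_\theta(a)|<\epsilon|S(a,r)|$, so $B\setminus A=\varnothing$ and $B\subset A$.

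The main obstacle is the bookkeeping of the parameters rather than any hard estimate: $\theta$ must be selected as a function of $\epsilon$ and $r$ through Lemma \ref{measure estimate3} \emph{before} $B$ is defined, and one must verify that the strict hypothesis of that lemma genuinely holds on $B\setminus A$. Everything else is the comparison of the two averages (over $E_\theta(a)$ and over $S(a,r)$) and a single cancellation. Note that since the constant is taken verbatim from Lemma \ref{the integral over A}, it depends only on $r$, exactly as claimed, and the degenerate points where $f$ vanishes on $S(a,r)$ lie in $A$ and contribute nothing.
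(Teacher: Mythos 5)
Your computations are sound as far as they go: on $B\setminus A$ the two membership inequalities do cancel (the positivity of $\int_{S(a,r)}|f|^2\,dA$ for $a\notin A$ is automatic, as you note, since otherwise $f$ would vanish on $S(a,r)$ and $a$ would lie in $A$) and give $|E_\theta(a)|<\epsilon|S(a,r)|$; the hypothesis of Lemma \ref{measure estimate3} is indeed available at such $a$ because $\epsilon\geqslant\epsilon^2$; and with $\delta'=1-\epsilon$ that lemma forces $|E_\theta(a)|>\tfrac{1+\epsilon}{2}|S(a,r)|>\epsilon|S(a,r)|$, a genuine contradiction. So for your particular $\theta$ the inclusion $B\subset A$ holds and the estimate follows from Lemma \ref{the integral over A} with a constant depending only on $r$. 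This is a genuinely different route from the paper's: the paper never proves $B\subset A$; it splits $\int_B=\int_{B\cap A}+\int_{B\setminus A}$ and bounds the second integral directly, via the claim that $|E_\theta(a)|\geqslant C(r)\,\epsilon\,|S(a,r)|$ or $|E_\theta(a)|\geqslant C(r)\,|S(a,r)|$ for every $a\notin A$ --- proved from the gradient estimate of Lemma \ref{gradient estimation} after rescaling $g(\lambda)=f(a+r(1-|a|)\lambda)$ --- together with Fubini and Lemma \ref{measure estimate0}. Your shortcut replaces that machinery by a single appeal to Lemma \ref{measure estimate3} and is considerably shorter.

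There is, however, a real quantifier gap. In the paper $\theta\in(0,\tfrac{1}{2})$ is a free parameter fixed \emph{before} the lemma, and the paper's proof yields $C=C(r)$ uniformly in $\theta$ (the claim uses only $\theta<\tfrac{1}{2}$). That uniformity is exactly what gets consumed later: in the proof of Lemma \ref{the weight function w*}, $\epsilon$ is fixed first using the present lemma, and only afterwards is $\theta$ chosen through Lemma \ref{measure estimate3} with $\delta'=\delta_0$ --- a constraint unrelated to your choice $\delta'=1-\epsilon$. Your argument establishes the estimate only for the one $\theta=\theta(\epsilon,r)$ you manufactured, so as written it proves a weaker statement than the one the paper needs. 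The repair is easy but must be stated: $E_\theta(a)$ grows as $\theta$ shrinks, so both your inclusion $B\subset A$ and the conclusion of Lemma \ref{measure estimate3} with $\delta'=\delta_0$ persist when $\theta$ is decreased, and taking $\theta$ below both thresholds makes the downstream argument go through. You anticipated half of this issue (that $\theta$ must be fixed before $B$ is defined) but not the collision with the later $\delta_0$-dependent choice, which is precisely where your version would break if used verbatim in the proof of Theorem \ref{the weight w}.
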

\begin{proof} Observe that
\begin{align*}
\int_{B}|f(z)|^2\omega^*(z)dA(z)&=\int_{B\cap A}|f(z)|^2\omega^*(z)dA(z)+\int_{B\setminus A}|f(z)|^2\omega^*(z)dA(z)\\
&\leqslant \int_{ A}|f(z)|^2\omega^*(z)dA(z)+\int_{B\setminus A}|f(z)|^2\omega^*(z)dA(z).
\end{align*}
Based on Lemma \ref{the integral over A}, it sufficient for us to show the following  inequality holds for some constant $C=C(r)$:
$$J:=\int_{B\setminus A}|f(z)|^2\omega^*(z)dA(z)\leqslant C\epsilon \int_{\mathbb D}|f(z)|^2\omega^*(z)dA(z).$$
Recall that for each $a\in B$ we have
$$|f(a)|^2 \leqslant \frac{\epsilon^2}{|E_\theta(a)|}\int_{E_\theta (a)}|f(z)|^2dA(z).$$
From the above inequality we have
\begin{align*}
J&=\int_{B\setminus A}|f(a)|^2\omega^*(a)dA(a)\\
&\leqslant \epsilon^2 \int_{B \setminus  A} \bigg( \frac{1}{|E_\theta(a)|}\int_{E_\theta(a)}|f(z)|^2dA(z)\bigg) \omega^*(a)dA(a)\\
&= \epsilon^2 \int_{\mathbb D} \bigg( \int_{B\setminus A} \frac{\omega^*(a)}{|E_\theta(a)|} \chi_{E_{\theta}(a)}(z) dA(a)\bigg)|f(z)|^2dA(z)\\
&\leqslant  \epsilon^2 \int_{\mathbb D} |f(z)|^2 \bigg( \int_{B \setminus A} \omega^*(a) \frac{\chi_{S(a, r)}(z)}{|E_\theta(a)|} dA(a)\bigg) dA(z).
\end{align*}
The last ``$\leqslant $" is due to  $E_\theta(a) \subset S(a, r)$.

To finish the proof, we need the following claim.\\
\textbf{Claim.} There is a positive constant $C=C(r)$ such that
$$|E_\theta(a)|\geqslant C \epsilon |S(a, r)|\ \ \ \mathrm{or}\ \ \ |E_\theta(a)|\geqslant C |S(a, r)|$$
for each $a\notin A.$

If the above claim is true, then we get
\begin{align*}
\int_{B \setminus A} \omega^*(a) \frac{\chi_{S(a, r)}(z)}{|E_\theta(a)|} dA(a)
& \leqslant C^{-1} \epsilon \int_{B \setminus A}\omega^*(a) \frac{\chi_{S(a, r)}(z)}{|S(a, r)|}dA(a).
\end{align*}
Use Lemma \ref{measure estimate0} again, we have
\begin{align*}
\int_{B \setminus A}\omega^*(a) \frac{\chi_{S(a, r)}(z)}{|S(a, r)|}dA(a) \leqslant C_1 \omega^*(z),
\end{align*}
the constant $C_1$ depends only on $r$. From the definition of $J$, we obtain
$$J\leqslant C\epsilon \int_{\mathbb D}|f(z)|^2\omega^*(z)dA(z)$$
for some positive constant $C=C(r)$.

Now we turn to prove the claim.  For each $a\notin A$, we have
\begin{align*}
|f(a)|^2 &>\frac{\epsilon}{|S(a, r)|}\int_{S(a, r)}|f(z)|^2 dA(z)\\
&=\frac{\epsilon}{r^2(1-|a|)^2}  \int_{S(a, r)} |f(z)|^2dA(z).
\end{align*}
Using the change of variables $z=a+r(1-|a|)\lambda$ to get
\begin{align*}
|f(a)|^2 &> \epsilon \int_{\mathbb D} |f(a+r(1-|a|)\lambda)|^2dA(\lambda).
\end{align*}
Let $g(\lambda):=f(a+r(1-|a|)\lambda)$, then $g$ is also harmonic on $\mathbb D$ and
$$|g(0)|^2> \epsilon \int_{\mathbb D} |g(\lambda)|^2dA(\lambda).$$

Applying Lemma \ref{gradient estimation} to the function $g$ to get a constant $C_0=C_0(r)$ such that
\begin{align*}
|g(z)-g(0)| & \leqslant C_0 |z| \int_{D(0, \frac{r}{4})} |g(\lambda)| dA(\lambda)\leqslant C_0 |z| \int_{\mathbb D} |g| dA
\end{align*}
whenever $|z|\leqslant \frac{r}{16}.$  Cauchy-Schwartz inequality gives that
$$|g(z)-g(0)| \leqslant C_0 |z| \bigg(\int_{\mathbb D} |g(\lambda)|^2 dA(\lambda)\bigg)^{\frac{1}{2}}\leqslant C_0 \epsilon^{-\frac{1}{2}}|g(0)|\cdot |z| .$$
provided $|z|\leqslant \frac{r}{16}$.

If
$$|z| < \min\Big\{\frac{r}{16}, \frac{\epsilon^{\frac{1}{2}}}{2C_0}\Big\},$$
then
$$|g(z)|\geqslant |g(0)|-|g(z)-g(0)|\geqslant \frac{|g(0)|}{2}.$$
Recall that $0<\theta< \frac{1}{2}$, we have
$|g(z)|>\theta |g(0)|$ for $|z|<\min\big\{\frac{r}{16}, \frac{\epsilon^{\frac{1}{2}}}{2C_0}\big\}.$ This means
that $$B\big(0, \frac{r}{16}\big) \subset \Big\{z\in \mathbb D: |g(z)|>\theta |g(0)| \Big \}$$
or
$$ B\big(0, \frac{\epsilon^{\frac{1}{2}}}{2C_0}\big) \subset \Big\{z\in \mathbb D: |g(z)|>\theta |g(0)| \Big \}.$$

On the other hand, observe that
\begin{align*}
|E_\theta(a)|&=\int_{\{z\in S(a, r): |f(z)|>\theta |f(a)|\}} dA(z)\\
&=\int_{\big\{|\frac{z-a}{r(1-|a|)}|<1: |f(z)|>\theta |f(a)|\big\}} dA(z)\\
&=|S(a, r)|\int_{\big\{|\lambda|<1: |f(a+r(1-|a|)\lambda)|>\theta |f(a)|\big\}} dA(\lambda)\\
&=|S(a, r)| \int_{\{|\lambda|<1: |g(\lambda)|>\theta |g(0)|\}} dA(\lambda)\\
&=|S(a, r)|\cdot \Big| \{\lambda \in \mathbb D: |g(\lambda)|>\theta |g(0)|\}\Big|.
\end{align*}
Therefore, we obtain
$$|E_\theta(a)|\geqslant \Big(\frac{r}{16}\Big)^2 |S(a, r)| $$
or
$$|E_\theta(a)|\geqslant \frac{\epsilon}{4C_0^2} |S(a, r)|.$$
This gives the proof of the claim, and so we complete the proof of Lemma \ref{the integral over B}.
\end{proof}

We are now  ready to prove Lemma \ref{the weight function w*}.
\begin{proof}[Proof of  Lemma \ref{the weight function w*}] Suppose that $|G_0\cap S(a, r)|\geqslant \delta_0 |S(a, r)|$.
From Lemmas \ref{the integral over A} and \ref{the integral over B}, we can choose
$\epsilon$ small enough so that
$$\int_{\mathbb D}|f(z)|^2\omega^*(z)dA(z)<2\int_{\mathbb D \setminus B}|f(z)|^2\omega^*(z)dA(z).$$
On the other hand, if $a\notin B$, then
\begin{align*}
|f(a)|^2 & >\epsilon^2 B_\theta f(a)\geqslant \frac{\epsilon^2}{|S(a, r)|}\int_{S(a, r)}|f(z)|^2dA(z).
\end{align*}
For the $\delta_0$ above, we  apply Lemma \ref{measure estimate3} to choose a $\theta\in (0, \frac{1}{2})$ such that
$$\bigg|\Big\{z\in S(a, r): |f(z)|> \theta |f(a)| \Big\}\bigg|>(1-\frac{\delta_0}{2})|S(a, r)|.$$
Since  $|G_0\cap S(a, r)|\geqslant \delta_0 |S(a, r)|$, we have
$$\bigg|\Big\{z\in S(a, r)\cap G_0: |f(z)|> \theta |f(a)| \Big\}\bigg|>\frac{\delta_0}{2}|S(a, r)|,$$
and so
$$\frac{1}{|S(a, r)|}\int_{S(a, r)\cap G_0}|f(z)|^2dA(z)>\frac{\theta^2 \delta_0}{2} |f(a)|^2\ \ \  \big(a\notin B\big).$$
Multiplying the above inequality by $\omega^*(a)$ and integrating over $\mathbb D \backslash B$  to get
\begin{align*}
\frac{\theta^2 \delta_0}{2} \int_{\mathbb D \backslash B} \omega^*(a) |f(a)|^2 dA(a)&<\int_{\mathbb D \backslash B}\frac{\omega^*(a)}{|S(a, r)|}\int_{S(a, r)\cap G_0}|f(z)|^2dA(z) dA(a)\\
&=\int_{G_0}|f(z)|^2\bigg(\int_{\mathbb D \backslash B} \frac{ \omega^*(a)}{|S(a, r)|} \chi_{S(a, r)}(z) dA(a)\bigg)dA(z)\\
&\leqslant C \int_{G_0}|f(z)|^2\omega^*(z) dA(z),
\end{align*}
the last ``$\leqslant$" follows from Lemma \ref{measure estimate0} and $C$ depends only on $r$.

Therefore,
  $$\int_{G_0}|f(z)|^2\omega^*(z) dA(z) > \frac{\delta_0 \theta^2}{4C} \int_{\mathbb D}|f(z)|^2\omega^*(z)dA(z)$$
  for all $f\in L_h^2(\omega^*)$. This completes the proof of Lemma \ref{the weight function w*}.
\end{proof}

\textbf{Acknowledgments} \   The authors are grateful to  Professor Kunyu Guo for helpful suggestions. Both authors
 would like to express their  deepest gratitude to Professor Dechao Zheng, X. Zhao's Ph.D thesis adviser, for his valuable guidance, encouragement and  support  in many years. \vspace{3mm}\\


\begin{thebibliography}{99}
{\footnotesize
\bibitem{AV2014}
T. Anderson, A. Vagharshakyan, A simple proof of the sharp weighted estimate for
Calderon-Zygmund operators on homogeneous spaces, \emph{Journal of Geometric Analysis}, 2014, 24(3): 1276-1297.

\bibitem{Cha}
R. Chac\'{o}n, Toeplitz Operators on Weighted Bergman Spaces, \emph{Journal of Function Spaces and Applications}, 2013, 2013: 1-5.

\bibitem{Choe}
B. Choe, Y. Lee, Commuting Toeplitz operators on the harmonic Bergman space, \emph{Michigan Mathematical Journal}, 1999, 46(1): 163-174.



\bibitem{Choi}
S. Choi, Positive Toeplitz operators on pluriharmonic Bergman spaces,  \emph{Journal of Mathematics of Kyoto University}, 2007, 47(2): 247-267.

%\bibitem{Christ1990}
%F. Christ. Lectures on Singular Integral Operators. 1990 AMS.



%\bibitem{CF1974}
%R. Coifman and C. Fefferman, Weighted norm inequalities for maximal functions and singular integrals. \emph{Studia Math.}, 51: 241-250, 1974.

\bibitem{Con2007}
O. Constantin, Discretizations of integral operators and atomic decompositions in vector-valued weighted Bergman spaces, \emph{Integral Equations and Operator Theory}, 2007, 59(4): 523-554.

\bibitem{Con}
O. Constantin, Carleson embeddings and some classes of operators on weighted Bergman spaces,  \emph{Journal of Mathematical Analysis and Applications}, 2010, 365(2): 668-682.

%\Bibitem{Cmp2011}
%D. Cruz-Uribe D, J. Martell, C. P\'{E}Rez, Weights, Extrapolation And The Theory Of Rubio De Francia[M]. Springer Science And Business Media, 2011.

%\bibitem{DSZ2011}
%R. Douglas, S. Sun and D. Zheng, Multiplication operators on the Bergman space via analytic continuation. \emph{Adv. Math.} 226 (2011), no. 1, 541-583.

\bibitem{DW2011}
R. Douglas,  K. Wang, A harmonic analysis approach to essential normality of principal submodules,  \emph{Journal of Functional Analysis},
2011, 261(11): 3155-3180.

%\bibitem{DPW2012}
%R. Douglas, M. Putinar and K. Wang, Reducing subspaces for analytic multipliers of the Bergman space. \emph{J. Funct. Anal.} 263 (2012), no. 6, 1744-1765.

\bibitem{Guo}
K. Guo, D. Zheng, Toeplitz algebra and Hankel algebra on the harmonic Bergman space,  \emph{Journal of Mathematical Analysis and Applications}, 2002, 276(1): 213-230.

%\bibitem{GH2015}
%K. Guo and H. Huang, Multiplication operators on the Bergman space. Lecture Notes in Mathematics, 2145. Springer, Heidelberg, 2015.

%\bibitem{Hastings1975}
%W. Hastings, A Carleson measure theorem for Bergman spaces [J]. \emph{Proc. Amer. Math. Soc.} 52 (1975), 237-241.

%\bibitem{HZ2015}
%H. Huang and D. Zheng, Multiplication operators on the Bergman space of bounded domains in $C^d$, http://arxiv.org/abs/1511.01678.

\bibitem{Hy2012}
T. Hyt\"{o}nen, The sharp weighted bound for general Calder\'{o}n-Zygmund operators,  \emph{Annals of Mathematics}, 2012, 175(3): 1473-1506.

\bibitem{Kuran}
\"{U}. Kuran, Subharmonic Behaviour of $|h|^p$ ($p>0$, $h$ harmonic), \emph{Journal of the London Mathematical Society}, 1974, 2(3): 529-538.

\bibitem{Lue}
D. Luecking,  Inequalities on Bergman spaces, \emph{Illinois Journal of Mathematics},  1981, 25(1): 1-11.

\bibitem{Luecking1983}
D. Luecking,  Equivalent norms on $L^p$ spaces of harmonic functions,  \emph{Monatshefte f\"{u}r Mathematik}, 1983, 96(2): 133-141.

\bibitem{Luecking1985}
D. Luecking, Representation and duality in weighted spaces of analytic functions,  \emph{Indiana University Mathematics Journal}, 1985, 34(2): 319-336.

\bibitem{Luecking AJM}
D. Luecking, Forward and reverse Carleson inequalities for functions in Bergman spaces and their derivatives, \emph{ American Journal of Mathematics}, 1985, 107(1): 85-111.


%\bibitem{Luecking2000}
%D. Luecking, Sampling measures for Bergman spaces on the unit disk [J]. \emph{Mathematische Annalen}, 2000, 316(4): 659-679.

%\bibitem{Mcd1979}
%G. McDonald and C. Sundberg, Toeplitz operators on the disk[J]. \emph{Indiana University Mathematics Journal}, 1979, 28(4): 595-611.


\bibitem{Miao}
J. Miao,  Toeplitz operators on harmonic Bergman spaces, \emph{Integral Equations and Operator Theory}, 1997, 27(4): 426-438.


\bibitem{Miao1998}
J. Miao,  Reproducing kernels for harmonic Bergman spaces of the unit ball,  \emph{Monatshefte f\"{u}r Mathematik}, 1998, 125(1): 25-35.

\bibitem{MW2014}
M. Mitkovski,  B. Wick, A reproducing kernel thesis for operators on Bergman-type function spaces, \emph{ Journal of  Functional  Analysis},  2014, 267(7): 2028-2055.

\bibitem{MW1974}
B. Muckenhoupt, R. Wheeden, Weighted norm inequalities for fractional
integrals, \emph{Transactions of the American Mathematical Society}, 1974, 192: 261-274.


%%\bibitem{NY1996}
%T. Nakazi T, M.Yamada,  $(A_2)$-conditions and Carleson inequalities in Bergman spaces[J]. \emph{Pacific Journal of Mathematics}, 1996, 173(1): 151-171.


%
%\bibitem{PR2014}
%J. \'{A}. Pel\'{a}ez, J. R\"{a}tty\"{a}, Weighted Bergman spaces induced by rapidly increasing weights, to appear in \emph{Memoirs of the American Mathematical Society}.
%
\bibitem{PR2015}
J. \'{A}. Pel\'{a}ez, J. R\"{a}tty\"{a},  Embedding theorems for Bergman spaces via harmonic analysis, \emph{Mathematische Annalen},  2015, 36(1-2): 205-239.

\bibitem{PR2016}
J. \'{A}. Pel\'{a}ez, J. R\"{a}tty\"{a}, Trace class criteria for Toeplitz and composition operators on small Bergman spaces, \emph{Advances in Mathematics}, 2016, 293: 606-643.



%\bibitem{PR2015}
%J. Pelaez and J. Rattya, Embedding theorems for Bergman spaces via harmonic analysis[J]. \emph{Mathematische Annalen}, 362 (2015), no. 1-2, 205-239.

\bibitem{Shu}
 Y. Shu, X. Zhao,  Positivity of Toeplitz operators on the harmonic Bergman spaces via Berezin transform,
   \emph{Acta Mathematica Sinica (English Series)}, 2016, 32(2): 175-186.

\bibitem{Sledd}
W. Sledd,  A note on $L^p$ spaces of harmonic functions, \emph{Monatshefte f\"{u}r Mathematik}, 1988, 106(1): 65-73.


%\bibitem{Wang}
%Z. Wang, X. Zhao, Invertibility of Fock Toeplitz operators with nonnegative symbols,  preprint.

\bibitem{Zhao}
X. Zhao, D. Zheng, Invertibility of Toeplitz operators via Berezin transforms, to appear in \emph{Journal of Operator Theory}.

%\bibitem{Zhu1988} K. Zhu K, Positive Toeplitz operators on weighted Bergman spaces of bounded symmetric domains [J]. \emph{Journal of  Operator Theory}, 1988, 20(2): 329-357.

\bibitem{Zhu2007}
K. Zhu,  Operator Theory in Function Spaces, 2nd ed,  Mathematical Surveys and Monographs, 138,  American Mathematical Society, 2007.

%\bibitem{Zorboska}
%N. Zorboska,  Toeplitz operators with BMO symbols and the Berezin transform, \emph{ International Journal of Mathematics and Mathematical Sciences}, 2003, 2003(46): 2929-2945.






















}
\end{thebibliography}
\end{document}